\newtheorem{theo}{Theorem}[section]
\newtheorem{propo}[theo]{Proposition}
\newtheorem{lema}[theo]{Lemma}
\newtheorem{coro}[theo]{Corollary}
\newtheorem{problem}{Problem}
\newfont{\nset}{msbm10}
\newcommand{\ns}[1]{\mbox{\nset #1}}
\def\A{{\mbox {\boldmath $A$}}}
\def\B{{\mbox {\boldmath $B$}}}
\def\E{{\mbox {\boldmath $E$}}}
\def\G{\Gamma}
\def\I{{\mbox {\boldmath $I$}}}
\def\J{{\mbox {\boldmath $J$}}}
\def\Ei{{\cal E}}
\def\O{{\mbox {\boldmath $O$}}}
\def\R{\ns{R}}
\def\A{{\mbox {\boldmath $A$}}}
\def\matrixA{{\mbox {\boldmath $A$}}}
\def\calA{{\cal{A}}}
\def\matrixI{{\mbox {\boldmath $I$}}}
\def\matrixJ{{\mbox {\boldmath $J$}}}
\def\O{{\mbox {\boldmath $O$}}}
\def\matrix0{{\mbox {\boldmath $O$}}}
\def\matrixR{\mbox{\boldmath $R$}}
\def\matrixS{\mbox{\boldmath $S$}}
\def\matrixX{\mbox{\boldmath $X$}}
\def\matrixY{\mbox{\boldmath $Y$}}
\def\e{{\mbox{\boldmath $e$}}}
\def\j{{\mbox{\boldmath $j$}}}
\def\p{{\mbox{\boldmath $p$}}}
\def\vec0{\mbox{\bf 0}}
\def\dist{\mathop{\partial }\nolimits}
\def\ecc{\mathop{\rm ecc }\nolimits}
\def\Ker{\mathop{\rm Ker }\nolimits}
\def\tr{\mathop{\rm tr }\nolimits}
\def\som{\mathop{\rm sum }\nolimits}
\def\sp{\mathop{\rm sp }\nolimits}
\def\span{\mathop{\rm span }\nolimits}
\begin{document}

\title{On Almost Distance-Regular Graphs
\thanks{This version is published in Journal of Combinatorial Theory, Series A 118 (2011),
1094-1113. Research supported by the Ministerio de Educaci\'on y Ciencia,
Spain, and the European Regional Development Fund under project
MTM2008-06620-C03-01 and by the Catalan Research Council under project
2009SGR1387.}}

\author{C. Dalf\'{o}$^\dag$, E.R. van Dam$^\ddag$, M.A. Fiol$^\dag$, E.
Garriga$^\dag$, B.L. Gorissen$^{\ddag}$
\\ \\
{\small $^\dag$Universitat Polit\`ecnica de Catalunya, Dept. de Matem\`atica Aplicada IV} \\
{\small Barcelona, Catalonia} {\small (e-mails: {\tt
\{cdalfo,fiol,egarriga\}@ma4.upc.edu})} \\
{\small $^\ddag$Tilburg University, Dept. Econometrics and O.R.} \\
{\small Tilburg, The Netherlands} {\small (e-mails: {\tt
\{edwin.vandam,b.l.gorissen\}@uvt.nl})} \\
}

\date{}

\maketitle

\noindent Keywords: Distance-regular graph; Walk-regular graph; Eigenvalues; Local
multiplicities; Predistance polynomial

\noindent 2010 Mathematics Subject Classification: 05E30, 05C50

\begin{abstract}
\noindent Distance-regular graphs are a key concept in
Algebraic Combinatorics and have given rise to several
generalizations, such as association schemes. Motivated by
spectral and other algebraic characterizations of
distance-regular graphs, we study `almost distance-regular
graphs'. We use this name informally for graphs that share some
regularity properties that are related to distance in the
graph. For example, a known characterization of a
distance-regular graph is the invariance of the number of walks
of given length between vertices at a given distance, while a
graph is called walk-regular if the number of closed walks of
given length rooted at any given vertex is a constant. One of
the concepts studied here is a generalization of both
distance-regularity and walk-regularity called
$m$-walk-regularity. Another studied concept is that of
$m$-partial distance-regularity or, informally,
distance-regularity up to distance $m$. Using eigenvalues of
graphs and the predistance polynomials, we discuss and relate
these and other concepts of almost distance-regularity, such as
their common generalization of $(\ell,m)$-walk-regularity. We
introduce the concepts of punctual distance-regularity and
punctual walk-regularity as a fundament upon which almost
distance-regular graphs are built. We provide examples that are
mostly taken from the Foster census, a collection of symmetric
cubic graphs. Two problems are posed that are related to the
question of when almost distance-regular becomes whole
distance-regular. We also give several characterizations of
punctually distance-regular graphs that are generalizations of
the spectral excess theorem.
\end{abstract}

\section{Introduction}
Distance-regular graphs \cite{bcn} are a key concept in
Algebraic Combinatorics \cite{g93} and have given rise to
several generalizations, such as association schemes
\cite{mtanaka}. Motivated by spectral \cite{vdhks06} and other
algebraic \cite{fiol02} characterizations of distance-regular
graphs, we study `almost distance-regular graphs'. We use this
name informally for graphs that share some regularity
properties that are related to distance in the graph. For
example, a known characterization (by Rowlinson~\cite{r97}) of
a distance-regular graph is the invariance of the number of
walks of given length between vertices at a given distance.
Godsil and McKay \cite{gmk} called a graph walk-regular if the
number of closed walks of given length rooted at any given
vertex is a constant, cf. \cite[p. 86]{g93}. One of the
concepts studied here is a generalization of both
distance-regularity and walk-regularity called
$m$-walk-regularity, as introduced in \cite{DaFiGa09}. Another
studied concept is that of $m$-partial distance-regularity or,
informally, distance-regularity up to distance $m$. Formally,
it means that for $i \le m$, the distance-$i$ matrix can be
expressed as a polynomial of degree $i$ in the adjacency
matrix. Related to this, there are two other generalizations of
distance-regular graphs. Weichsel \cite{w82} introduced
distance-polynomial graphs as those graphs for which each
distance-$i$ matrix can be expressed as a polynomial in the
adjacency matrix. Such graphs were also studied by Beezer
\cite{beezer}. A graph is called distance degree regular if
each distance-$i$ graph is regular. Such graphs were studied by
Bloom, Quintas, and Kennedy \cite{bloom},  Hilano and Nomura
\cite {hilanomura}, and also by Weichsel \cite{w82} (as
super-regular graphs).

This paper is organized as follows. In the next section we give
the basic background for our paper. This includes our two main
tools: eigenvalues of graphs and their predistance polynomials.
In Section \ref{sec:3}, we discuss several concepts of almost
distance-regularity, such as partial distance-regularity in
Section \ref{sec:3.2} and $m$-walk-regularity in Section
\ref{sec:3.4}. These concepts come together in Section
\ref{sec:lmwr}, where we discuss $(\ell,m)$-walk-regular
graphs, as introduced in \cite{DaFiGa09b}. Sections
\ref{sec:pdr} and \ref{sec:3.3} are used to introduce the
concepts of punctual distance-regularity and punctual
walk-regularity. These form the fundament upon which almost
distance-regular graphs are built. Illustrating examples are
mostly taken from the Foster census \cite{rcmd09}, a collection
of symmetric cubic graphs that we checked by computer for
almost distance-regularity. In Section \ref{sec:3} we also pose
two problems. Both are related to the question of when almost
distance-regular becomes whole distance-regular. The spectral
excess theorem \cite{fg2} is also of this type: it states that
a graph is distance-regular if for each vertex, the number of
vertices at extremal distance is the right one (i.e., some
expression in terms of the eigenvalues), cf. \cite{vd08,fgg09}.
In Section \ref{sec:4} we give several characterizations of
punctually distance-regular graphs that have the same flavor as
the spectral excess theorem. We will show in Section
\ref{specmaxdiam} that these results are in fact
generalizations of the spectral excess theorem. In this final
section we focus on the case of graphs with spectrally maximum
diameter (distance-regular graphs are such graphs).

\section{Preliminaries}
In this section we give the background on which our study is
based. We would like to stress that in this paper we restrict
to simple, connected, and regular graphs, unless we explicitly
state otherwise. First, let us recall some basic concepts
and define our generic notation for graphs.

\subsection{Spectra of graphs and walk-regularity}
Throughout this paper, $\G=(V,E)$ denotes a simple, connected,
$\delta$-regular graph, with order $n=|V|$ and adjacency matrix
$\A$. The {\it distance} between two vertices $u$ and $v$ is
denoted by $\partial (u,v)$, so that the {\it eccentricity} of
a vertex $u$ is $\ecc(u)=\max_{v\in V}\dist (u,v)$ and the {\it
diameter} of the graph is $D=\max_{u\in V}\ecc(u)$. The set of
vertices at distance $i$, from a given vertex $u\in V$ is
denoted by $\Gamma_i(u)$, for $i=0,1,\dots,D$. The degree of a
vertex $u$ is denoted by $\delta(u)=|\Gamma_1(u)|$. The {\em
distance-$i$ graph} $\G_i$ is the graph with vertex set $V$ and
where two vertices $u$ and $v$ are adjacent if and only if
$\dist(u,v)=i$ in $\G$. Its adjacency matrix $\A_i$ is usually
referred to as the {\em distance-$i$ matrix} of $\G$. The
spectrum of $\G$ is denoted by
$$
\sp \G = \sp \A = \{\lambda_0^{m_0},\lambda_1^{m_1},\dots,
\lambda_d^{m_d}\},
$$
where the different eigenvalues of $\G$ are in decreasing order,
$\lambda_0>\lambda_1>\cdots >\lambda_d$, and the superscripts
stand for their multiplicities $m_i=m(\lambda_i)$. In
particular, note that $\lambda_0=\delta$, $m_0=1$ (since $\G$ is
$\delta$-regular and connected) and $m_0+m_1+\cdots+m_d=n$.

For a given ordering of the vertices of $\G$, the vector space
of linear combinations (with real coefficients) of the vertices
is identified with $\R^n$, with canonical basis $\{\e_u : u\in
V\}$. Let $Z=\prod_{i=0}^d (x-\lambda_i)$ be the minimal
polynomial of $\A$. The vector space $\R_d[x]$ of real
polynomials of degree at most $d$ is isomorphic to $\R[x]/(Z)$.
For every $i=0,1,\dots,d$, the orthogonal projection of $\R^n$
onto the eigenspace $\Ei_i=\Ker (\A-\lambda_i \I)$ is given by
the Lagrange interpolating polynomial
$$
\lambda_i^*=\frac{1}{\phi_i}\prod_{\stackrel{j=0}{j\neq i}}^d
(x-\lambda_j) =\frac{(-1)^i}{\pi_i}\prod_{\stackrel{j=0}{j\neq i}}^d
(x-\lambda_j)
$$
of degree $d$, where $\phi_i=\prod_{j=0,j\neq i}^d
(\lambda_i-\lambda_j)$ and $\pi_i=|\phi_i|$. These polynomials
satisfy $\lambda_i^*(\lambda_j)=\delta_{ij}$. The matrices
$\E_i=\lambda_i^*(\A)$, corresponding to these orthogonal
projections, are the {\it $($principal\/$)$ idempotents} of
$\A$, and are known to satisfy the properties:
$\E_i\E_j=\delta_{ij}\E_i$; $\A\E_i=\lambda_i\E_i$; and
$p(\A)=\sum_{i=0}^d p(\lambda_i)\E_i$, for any polynomial $p\in
\R[x]$ (see e.g. Godsil~\cite[p. 28]{g93}). The {\em
$(u$-$)$local multiplicities} of the eigenvalue $\lambda_i$ are
defined as
$$
m_u(\lambda_i) = \|\E_i\e_u\|^2
= \langle\E_i\e_u,\e_u\rangle =
(\E_i)_{uu}\qquad (u\in V;\ i =0,1,\dots,d),
$$
and satisfy $\sum_{i=0}^d m_u(\lambda_i) = 1$ and $\sum_{u\in
V} m_u(\lambda_i) =m_i$, $i=0,1,\dots,d$ (see Fiol and Garriga
\cite{fg2}).

Related to this concept, we say that $\G$ is {\em
spectrum-regular} if, for any $i=0,1,\ldots, d$, the $u$-local
multiplicity of $\lambda_i$ does not depend on the vertex $u$.
Then, the above equations imply that the (standard)
multiplicity `splits' equitably among the $n$ vertices, giving
$m_u(\lambda_i)=m_i/n$.

By analogy with the local multiplicities, which correspond to
the diagonal entries of the idempotents, Fiol, Garriga, and
Yebra~\cite{fgy99} defined the {\it crossed $(uv$-$)$local
multiplicities\/} of the eigenvalue $\lambda_i$,  denoted by
$m_{uv}(\lambda_i)$, as
$$
m_{uv}(\lambda_i)=\langle\E_i\e_u,\E_i\e_v\rangle
=\langle\E_i\e_u,\e_v\rangle=(\E_i)_{uv} \qquad (u,v\in V;\ i =0,1,\dots,d).
$$
(Thus, in particular, $m_{uu}(\lambda_i)=m_{u}(\lambda_i)$.)
These parameters allow us to compute the number of walks of
length $\ell$ between two vertices $u,v$ in the following way:
\begin{equation}
\label{crossed-mul->num-walks} a_{uv}^{({\ell})} =(\A^{\ell})_{uv}=
\sum_{i=0}^d m_{uv}(\lambda_i)\lambda_i^{\ell} \qquad (\ell =0,1,\dots).
\end{equation}
Conversely, given the eigenvalues from which we compute the
polynomials $\lambda_i^*$, and the tuple
${\cal{C}}_{uv}=(a_{uv}^{(0)},a_{uv}^{(1)},\ldots,a_{uv}^{(d)})$,
we can obtain the crossed local multiplicities. With this aim,
let us introduce the following notation: given a polynomial
$p=\sum_{i=0}^d \zeta_i x^i$, let
$p({\cal{C}}_{uv})=\sum_{i=0}^d \zeta_i a_{uv}^{(i)}$. Thus,
\begin{equation}
\label{num-walks->crossed-mul}
 m_{uv}(\lambda_i) = (\E_i)_{uv} =
(\lambda_i^*(\A))_{uv} = \lambda_i^*({\cal{C}}_{uv})  \qquad (i =0,1,\dots,d).
\end{equation}

Let $a_u^{({\ell})}$ denote the number of closed walks of
length ${\ell}$ rooted at vertex $u$, that is,
$a_u^{({\ell})}=a_{uu}^{({\ell})}$. If these numbers only
depend on ${\ell}$, for each $\ell \ge 0$, then $\G$ is called
{\em walk-regular} (a concept introduced by Godsil and
McKay~\cite{gmk}). In this case we write
$a_u^{({\ell})}=a^{({\ell})}$. Notice that, as
$a_u^{(2)}=\delta(u)$, the degree of vertex $u$, a walk-regular
graph is necessarily regular. By (\ref{crossed-mul->num-walks})
and (\ref{num-walks->crossed-mul}) it follows that
spectrum-regularity and walk-regularity are equivalent
concepts. It also shows that the existence of the constants
$a^{(0)},a^{(1)},\ldots,a^{(d)}$ suffices to assure
walk-regularity. It is well known that any distance-regular
graph, as well as any vertex-transitive graph, is walk-regular,
but the converse is not true.

\subsection{The predistance polynomials and distance-regularity}
A graph is called {\em distance-regular} if there are constants
$c_i, a_i,b_i$ such that for any $i=0,1,\dots, D$, and any two
vertices $u$ and $v$ at distance $i$, among the neighbours of
$v$, there are $c_i$ at distance $i-1$ from $u$, $a_i$ at
distance $i$, and $b_i$ at distance $i+1$. In terms of the
distance matrices $\A_i$ this is equivalent to
$$
\A\A_i=b_{i-1}\A_{i-1}+a_i
\A_i+c_{i+1}\A_{i+1}\qquad (i=0,1,\dots,D)
$$
(with $b_{-1}=c_{D+1}=0$). From this recurrence relation, one
can obtain the so-called {\em distance polynomials $p_i$}.
These are such that $\deg p_i=i$ and $\A_i=p_i(\A)$,
$i=0,1,\dots,D$.

From the spectrum of a given (arbitrary, but connected regular) graph, $\sp \G = \linebreak
\{\lambda_0^{m_0},\lambda_1^{m_1},\ldots, \lambda_d^{m_d}\}$,
one can generalize the distance polynomials of a
distance-regular graph by considering the
following scalar product in $\R_d[x]$:
\begin{equation}
\label{product} \langle p, q\rangle =\frac{1}{n}\tr
(p(\A)q(\A))=\frac{1}{n} \sum_{i=0}^d m_i p(\lambda_i) q(\lambda_i).
\end{equation}
Then, by using the Gram-Schmidt method and normalizing
appropriately, it is routine to prove the existence and
uniqueness of an orthogonal system of so-called {\em
predistance polynomials} $\{p_i\}_{0\le i\le d}$ satisfying
$\deg p_i=i$ and $\langle p_i,p_j \rangle=
\delta_{ij}p_i(\lambda_0)$ for any $i,j=0,1,\dots d$. For
details, see Fiol and Garriga~\cite{fg2,fg3}.

As every sequence of orthogonal polynomials, the predistance
polynomials satisfy a three-term recurrence of the form
\begin{equation}
\label{recur-pol}
xp_i=\beta_{i-1}p_{i-1}+\alpha_i p_i+\gamma_{i+1}p_{i+1}\qquad (i=0,1,\dots,d),
\end{equation}
where the constants $\beta_{i-1}$, $\alpha_i$, and
$\gamma_{i+1}$ are the Fourier coefficients of $xp_i$ in terms
of $p_{i-1}$, $p_i$, and $p_{i+1}$, respectively (and
$\beta_{-1}=\gamma_{d+1}=0$), with initial values $p_0=1$ and
$p_1=x$. Let $\omega_k$ be the leading coefficient of $p_k$.
Then, from the above recurrence, it is immediate that
\begin{equation} \label{omega_k}
\omega_k=\frac
1{\gamma_1\gamma_2\cdots \gamma_k}.
\end{equation}
In general, we define the
{\em preintersection numbers} $\xi_{ij}^k$, with
$i,j,k=0,1,\dots d$, as the Fourier coefficients of $p_ip_j$ in
terms of the basis $\{p_k\}_{0\le k\le d}$; that is:
\begin{equation}\label{preintersec}
\xi_{ij}^k=\frac{\langle
p_ip_j,p_k\rangle}{\|p_k\|^2}=\frac{1}{np_k(\lambda_0)}\sum_{l=0}^d
m_lp_i(\lambda_l)p_j(\lambda_l)p_k(\lambda_l).
\end{equation}
With this notation, notice that the constants in
(\ref{recur-pol}) correspond to the preintersection numbers
$\alpha_i=\xi_{1,i}^i$, $\beta_i=\xi_{1,i+1}^i$, and
$\gamma_i=\xi_{1,i-1}^i$. As expected, when $\G$ is
distance-regular, the predistance polynomials and the
preintersection numbers become the distance polynomials and the
{\em intersection numbers} $p_{ij}^k=|\G_i(u)\cap\G_j(v)|$,
$\dist(u,v)=k$, for $i,j,k=0,1,\dots,D (=d)$. For an arbitrary
graph we say that the intersection number $p^k_{ij}$ is {\em
well-defined} if $|\G_i(u)\cap\G_j(v)|$ is the same for all
vertices $u,v$ at distance $k$, and we let $a_i=p_{1,i}^i$,
$b_i=p_{1,i+1}^i$, and $c_i=p_{1,i-1}^i$. From a combinatorial
point of view, we would like many of these intersection numbers
to be well-defined, in order to call a graph almost
distance-regular.

Note that not all properties of the distance polynomials of
distance-regular graphs hold for the predistance polynomials.
The crucial property that is not satisfied in general is that
of the equations $\A_i = p_i(\A)$. In fact, informally speaking
we will `measure' almost distance-regularity by how much the
matrices $\A_i$ look like the matrices $p_i(\A)$. Walk-regular
graphs, for example, were characterized by Dalf\'{o}, Fiol, and
Garriga~\cite{DaFiGa09} as those graphs for which the matrices
$p_i(\A)$, $i=1,\dots, d$, have null diagonals (as have the
matrices $\A_i$, $i=1,\dots, d$).

A property that holds for all connected graphs is that the sum of all
predistance polynomials gives the Hoffman polynomial $H$:
\begin{equation}
\label{polHof}
 H = \sum_{i=0}^d p_i =  \frac{n}{\pi_0} \prod_{i=1}^d
(x-\lambda_i) = n\,\lambda_0^*,
\end{equation}
which characterizes regular graphs by the condition $H(\A)=\J$,
the all-$1$ matrix \cite{hof63}. Note that (\ref{polHof})
implies that $\omega_d=\frac n{\pi_0}$. It can also be used to
show that $\alpha_i+\beta_i+\gamma_i=\lambda_0=\delta$ for all
$i$.

For bipartite graphs we observe the following facts. Because
the eigenvalues are symmetric about zero
($\lambda_i=-\lambda_{d-i}$ and $m_i=m_{d-i}$, $0\le i\le d$),
we have $\langle xp_i,p_i \rangle =0$ from (\ref{product}), and
therefore $\alpha_i=0$ for all $i$. It then follows from
(\ref{recur-pol}) that the predistance polynomials $p_i$ are
even for even $i$, and odd for odd $i$. Using
(\ref{preintersec}), this implies among others that
$\xi^k_{ij}=0$ if $i+j+k$ is odd. It also follows that
$\gamma_d=\lambda_0=\delta$. Finally, the Hoffman polynomial
splits into an even part $H_0=\sum_{i}p_{2i}$ and an odd part
$H_1=H-H_0$, and these have the property that $(H_0)_{uv}=1$ if
$u$ and $v$ are in the same part of the bipartition, and
$(H_1)_{uv}=1$ if $u$ and $v$ are in different parts.

\subsection{The adjacency algebra and the distance algebra
}\label{subsec_alg}

Given a graph $\G$, the set $\calA= \{p(\matrixA):\,
p\in\R[x]\}$ is a vector space of dimension $d+1$ and also an
algebra with the ordinary product of matrices, known as the
{\it adjacency algebra}, and
$\{\matrixI,\matrixA,\ldots,\matrixA^d\}$ is a basis of
$\calA$. Since $\matrixI,\matrixA,\matrixA^2,\ldots,\matrixA^D$
are linearly independent, we have that $\dim \mathcal{A}=d+1\ge
D+1$ and therefore the diameter is at most $d$. A natural
question is to enhance the case when equality is attained; that
is, $D=d$. In this case, we say that the graph $\G$ has {\it
spectrally maximum} diameter.

Let ${\mathcal D}$ be the linear span of the set
$\{\matrixA_0,\matrixA_1,\ldots,\matrixA_D\}$. The
($D+1$)-dimensional vector space ${\mathcal D}$ forms an
algebra with the entrywise or Hadamard product of matrices,
defined by
$(\matrixX\circ\matrixY)_{uv}=\matrixX_{uv}\matrixY_{uv}$. We
call ${\mathcal D}$ the {\em distance $\circ$-algebra}.

In the following sections, we will work with the vector space
${\cal T}={\cal A}+{\cal D}$, and relate the distance-$i$
matrices $\A_i \in {\mathcal D}$ with the matrices $p_i(\A) \in
{\mathcal A}$. Note that $\matrixI$, $\matrixA$, and $\matrixJ$
are matrices in ${\cal A}\cap{\cal D}$ since
$\matrixJ=H(\matrixA)\in \mathcal{A}$. Thus, $\dim ({\cal
A}\cap {\cal D}) \geq 3$, if $\G$ is not a complete graph (in
this exceptional case $\J=\I+\A$). Note that ${\mathcal
A}={\mathcal D}$ if and only if $\G$ is distance-regular, which
is therefore equivalent to $\dim ({\cal A}\cap {\cal D}) =
d+1$. For this reason, the dimension of ${\cal A}\cap {\cal D}$
(compared to $D$ and $d$) can also be seen as a measure of
almost distance-regularity.

One concept of almost distance-regularity related to this was
introduced by Weichsel \cite{w82}: a graph is called {\em
distance-polynomial} if ${\cal D} \subset {\cal A}$, that is,
if each distance matrix is a polynomial in $\A$. Hence a graph
is distance-polynomial if and only if $\dim ({\cal A}\cap {\cal
D})=D+1$.

Note that for any pair of (symmetric) matrices
$\matrixR,\matrixS\in{\cal T}$, we have
\[\tr (\matrixR\matrixS)=
  \sum_{u\in V}(\matrixR\matrixS)_{uu}=
  \sum_{u\in V}\sum_{v\in V}\matrixR_{uv}\matrixS_{vu}=
  \som(\matrixR\circ\matrixS).\]
Thus, we can define a scalar product in ${\cal T}$ in two
equivalent forms:
$$
\langle\matrixR,\matrixS\rangle=
\frac 1n\tr (\matrixR\matrixS)= \frac
1n\som(\matrixR\circ\matrixS).
$$
\noindent In ${\cal A}$, this scalar product coincides with the
scalar product (\ref{product}) in $\R[x]/(Z)$, in the sense
that $\langle p(\A),q(\A)\rangle=\langle p,q\rangle$. Observe
that the factor $1/n$ assures that $\|\matrixI\|^2=\langle
1,1\rangle =1$. Note also that $\|\A_i\|^2=\overline{\delta}_i$
(the {\em average degree} of $\G_i$), whereas
$\|p_i(\A)\|^2=p_i(\lambda_0)$.

Association schemes are generalizations of distance-regular
graphs that will provide almost distance-regular graphs. A
(symmetric) {\em association scheme} can be defined as a set of
symmetric $(0,1)$-matrices (graphs) $\{\B_0=\I, \B_1, \dots,
\B_e\}$ adding up to the all-1 matrix $\J$, and whose linear
span is an algebra ${\cal B}$ (with both --- the ordinary and the Hadamard --- products),
called the Bose-Mesner algebra. In the case of distance-regular
graphs, the distance-matrices $\A_i$ form an association
scheme. For more on association schemes, we refer to a recent
survey by Martin and Tanaka \cite{mtanaka}.

\section{Different concepts of almost
distance-regularity}\label{sec:3} In this section we introduce
some concepts of almost distance-regular graphs, together with
some characterizations. We begin with some closely related
`local concepts' concerning distance-regular and
distance-polynomial graphs.

\subsection{Punctually distance-polynomial and punctually distance-regular \newline
graphs}\label{sec:pdr}

We recall that in this paper $\G$ denotes a connected regular graph.
We say that a graph $\G$ is $h$-{\em punctually
distance-polynomial} for an integer $h\leq D$, if $\A_h\in
{\cal A}$; that is, there exists a polynomial $q_h\in\R_d[x]$
such that $q_h(\A)=\A_h$. Obviously, $\deg q_h \ge h$. In case
of equality, i.e., if $\deg q_h=h$, we call the graph $h$-{\em
punctually distance-regular}. Notice that, since $\A_0=\I$ and
$\A_1=\A$, every graph is $0$-punctually distance-regular
($q_0=1$) and $1$-punctually distance-regular ($q_1=x$). In
general, we have the following result.
\begin{lema}
\label{lema3.1} Let $h \le D$ and let $\G$ be $h$-punctually
distance-polynomial, with $\A_h=q_h(\A)$. Then the distance-$h$
graph $\G_h$ is regular of degree $q_h(\lambda_0)=\|q_h\|^2$. If
$\deg q_h=h$ $($$\G$ is $h$-punctually distance-regular$)$, then
$q_h=p_h$, the predistance polynomial of degree $h$. If $\deg
q_h>h$, then $\deg q_h>D$.
\end{lema}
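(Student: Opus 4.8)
The plan is to prove the three claims in order, using the scalar product structure on $\R_d[x]$ and the adjacency algebra together with basic facts about polynomials in $\A$.

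First, I would establish regularity of $\G_h$. Since $\A_h = q_h(\A) \in \calA$ and $\A_h$ is the adjacency matrix of $\G_h$, its row sums give the degrees in $\G_h$. The key observation is that $\A$ is $\delta$-regular, so $\A\j=\lambda_0\j$ where $\j$ is the all-$1$ vector; hence $q_h(\A)\j = q_h(\lambda_0)\j$, which shows every row of $\A_h$ sums to $q_h(\lambda_0)$, so $\G_h$ is regular of degree $q_h(\lambda_0)$. To identify this with $\|q_h\|^2$, I would use the norm computation recorded just before the lemma: in the scalar product on $\calA$ we have $\|\A_h\|^2 = \overline{\delta}_h$, the average degree of $\G_h$, while $\|q_h(\A)\|^2 = q_h(\lambda_0)$ by the remark $\|p_i(\A)\|^2 = p_i(\lambda_0)$ (the same identity $\langle q_h,q_h\rangle = \frac1n\sum_i m_i q_h(\lambda_i)^2$ evaluated via $q_h(\A)\circ q_h(\A)=\A_h$, a $(0,1)$-matrix). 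Since $\A_h=q_h(\A)$, these two norms coincide, giving $q_h(\lambda_0)=\overline{\delta}_h=\|q_h\|^2$; combined with the regularity just shown, the average degree equals the common degree $q_h(\lambda_0)$.

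Next, for the case $\deg q_h = h$, I would show $q_h = p_h$. The natural strategy is to verify that $q_h$ satisfies the defining orthogonality of the predistance polynomials, namely $\langle q_h, x^j\rangle = 0$ for $j<h$ together with $\deg q_h = h$, which by uniqueness of the orthogonal system forces $q_h=p_h$. The computation is $\langle q_h, x^j\rangle = \langle q_h(\A), \A^j\rangle = \frac1n\tr(\A_h \A^j) = \frac1n\som(\A_h\circ\A^j)$ using the trace/Hadamard identity from the distance algebra section. Now $\A^j$ is supported on pairs $(u,v)$ with $\partial(u,v)\le j$, since there are no walks of length $j$ between vertices at distance greater than $j$; for $j<h$ this support is disjoint from the support of $\A_h$ (pairs at distance exactly $h$), so the Hadamard product vanishes entrywise and the inner product is $0$. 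This gives the required orthogonality for all $j<h$, and uniqueness finishes this part.

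Finally, for $\deg q_h > h$ I would show $\deg q_h > D$. Writing $r=\deg q_h$, the main obstacle — and the crux of the argument — is ruling out $h < r \le D$. Suppose for contradiction that $h<r\le D$. I would exploit the same Hadamard/support idea: consider $\langle q_h(\A), \A^r\rangle$ on the one hand, and on the other hand compare with lower powers. The clean approach is to look at the leading behaviour: $\A^r = \omega_r^{-1}\,\text{(leading term)}$ contributions show that $q_h(\A)=\A_h$ has a component in degree $r$, yet $\tr(\A_h\A^j)=\frac1n\som(\A_h\circ\A^j)=0$ for all $j<h$ as above, so $\A_h$ is orthogonal to $\R_{h-1}[x]$ acting on $\A$; hence the expansion of $\A_h$ in the predistance basis involves only $p_h,\dots,p_r$. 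Pairing $\A_h=q_h(\A)$ with $\A^r$ and using that $\A^r\circ\A_h$ picks out, for each vertex $u$, walks of length $r$ to vertices at distance exactly $h$, one obtains that the coefficient of $p_r$ in $q_h$ is a positive multiple of a sum of nonnegative walk-counts that cannot all vanish when $r\le D$ (because at distance $h\le D$ there genuinely exist vertices, and for $r\le D$ shortest-path walks of length $r$ to them can be arranged), contradicting $\A_h\in\R_r[x]$ with the forced orthogonality. Thus no such $r$ in the range $(h,D]$ exists, forcing $\deg q_h>D$. I expect this last step to require the most care, as it is where the interaction between the combinatorial support of $\A_h$ and the algebraic degree must be turned into a genuine contradiction.
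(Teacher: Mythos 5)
Your first two claims are proved correctly and by essentially the paper's own argument: regularity of $\G_h$ from $q_h(\A)\j=q_h(\lambda_0)\j$, the norm identity from $\A_h$ being a $(0,1)$-matrix, and $q_h=p_h$ from orthogonality of $q_h$ to $\R_{h-1}[x]$ via disjointness of supports. (One small caveat: orthogonality to $\R_{h-1}[x]$ plus $\deg q_h=h$ only gives $q_h=c\,p_h$ with $c\neq 0$; it is the identity $\|q_h\|^2=q_h(\lambda_0)$ from your first step that forces $c=1$. The paper is equally terse here, but that normalization is what pins down the constant.)

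The third claim is where your proposal has a genuine gap. You pair $\A_h=q_h(\A)$ with the \emph{power} $\A^r$, i.e., you consider $\som(\A^r\circ\A_h)=\sum_{\dist(u,v)=h}a^{(r)}_{uv}$, the number of walks of length $r$ between vertices at distance $h$. No contradiction can be extracted from this quantity: it is perfectly allowed to be positive (for $r>h$ walks can backtrack), it is \emph{not} the coefficient of $p_r$ in $q_h$ (that coefficient is $\langle q_h,p_r\rangle/\|p_r\|^2$, and pairing with $x^r$ mixes all coefficients), and even if you did show that the coefficient of $p_r$ in $q_h$ is nonzero, that contradicts nothing --- it is automatically nonzero because $\deg q_h=r$, while the ``forced orthogonality'' you invoke only concerns $p_0,\dots,p_{h-1}$ and puts no constraint whatsoever on the top coefficient. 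Moreover, the assertion that this coefficient is ``a positive multiple of a sum of nonnegative walk-counts'' is unjustified: the entries of $p_r(\A)$ on distance-$h$ pairs involve the mixed-sign coefficients of $p_r$.

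The missing idea is to evaluate at pairs at distance exactly $r$, not at distance $h$. Since $r\le D$, there exist vertices $u,v$ with $\dist(u,v)=r$; for such a pair every power $\A^j$ with $j<r$ has zero $(u,v)$-entry, so $(q_h(\A))_{uv}=\varsigma_r a^{(r)}_{uv}$, where $\varsigma_r\neq 0$ is the leading coefficient of $q_h$ and $a^{(r)}_{uv}\ge 1$ (there is a shortest path). Hence $(q_h(\A))_{uv}\neq 0$, whereas $(\A_h)_{uv}=0$ because $r\neq h$ --- a contradiction; this is exactly the paper's proof. If you prefer your pairing style, pair with the \emph{distance matrix} $\A_r$ instead of the power $\A^r$: then $0=\langle \A_h,\A_r\rangle=\langle q_h(\A),\A_r\rangle=\frac{\varsigma_r}{n}\sum_{\dist(u,v)=r}a^{(r)}_{uv}\neq 0$ gives the same contradiction in summed form.
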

 \begin{proof}
Let $\j$ denote the all-$1$ vector. Because $\A_h
\j=q_h(\A)\j=q_h(\lambda_0)\j$, the graph $\G_h$ is regular with
degree $q_h(\lambda_0)=\frac{1}{n}\tr (\A_h^2)=
\|\A_h\|^2=\|q_h\|^2$. Moreover, for every polynomial
$p\in\mathbb{R}_{h-1}[x]$, we have $\langle
q_h,p\rangle=\langle \A_h,p(\A)\rangle= 0$. Thus, if $\deg
q_h=h$, we must have $q_h=p_h$ by the uniqueness of the
predistance polynomials. If $h<\deg q_h=i \le D$ and $q_h$ has
leading coefficient $\varsigma_i$ then we would have
$(q_h(\A))_{uv}=\varsigma_i a^{(i)}_{uv}\neq 0$ for any two
vertices $u,v$ at distance $i$, which contradicts
$(q_h(\A))_{uv}=(\A_h)_{uv}=0$.
 \end{proof}

This lemma implies that the concepts of $h$-punctually
distance-polynomial and $h$-punctually distance-regular are the
same for graphs with spectrally maximum diameter $D=d$. We will
consider such graphs in more detail in Section
\ref{specmaxdiam}.

Any polynomial of degree at most $d$ is a linear combination of
the polynomials $p_0,\dots,p_d$. If $\A_h=q_h(\A)$, then
clearly $q_h$ is a linear combination of the polynomials
$p_h,\dots,p_d$. For example, in the case of a graph with $D=2$
(which is always distance-polynomial; see the next section), we
have $\A_2=q_2(\A)$, with $q_2=p_2+\cdots +p_d$.

On the other hand, if $p_h(\A)$ is a linear combination of the
distance-matrices $\A_i, i=0,1,\dots,D$, then we have the
following.
\begin{lema}
\label{lem D to A} Let $h\le d$. If $p_h(\A)\in{\cal D}$, then
 $h \leq D$ and $\G$ is $h$-punctually distance-regular.
\end{lema}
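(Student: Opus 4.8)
The plan is to expand $p_h(\A)$ in the natural basis of ${\cal D}$ and then read off the coefficients by pairing against the powers $\A^j$. The matrices $\A_0,\dots,\A_D$ have pairwise disjoint supports, so they are linearly independent and form a basis of ${\cal D}$ (in fact an orthogonal one, since $\langle\A_i,\A_j\rangle=0$ for $i\neq j$). The hypothesis $p_h(\A)\in{\cal D}$ then lets me write $p_h(\A)=\sum_{i=0}^{D}c_i\A_i$, where $c_i$ is simply the common value $(p_h(\A))_{uv}$ over all pairs $u,v$ at distance $i$. Because $\deg p_h=h$, the entry $(p_h(\A))_{uv}$ vanishes whenever $\partial(u,v)>h$ (no walk of length $\leq h$ joins such a pair), so immediately $c_i=0$ for $h<i\leq D$.

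Next I would introduce the moment quantities $M_{ij}=\langle\A_i,\A^j\rangle=\frac1n\sum_{\partial(u,v)=i}a_{uv}^{(j)}$. The crucial structural fact is that $M$ is triangular: $M_{ij}=0$ for $i>j$ (since $a_{uv}^{(j)}=0$ when $\partial(u,v)>j$), while $M_{jj}>0$ for every $j\leq D$ (a pair at distance $j$ admits at least one shortest path, i.e.\ a walk of length $j$). Pairing the expansion with $\A^j$ gives $\sum_i c_i M_{ij}=\langle p_h(\A),\A^j\rangle=\langle p_h,x^j\rangle$. By orthogonality of the predistance polynomials this right-hand side equals $0$ for $j<h$, and equals $\frac1{\omega_h}\|p_h\|^2=\frac1{\omega_h}p_h(\lambda_0)>0$ for $j=h$ (writing $x^h=\omega_h^{-1}p_h+(\mbox{lower degree})$ and using that the lower-degree part is orthogonal to $p_h$).

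The heart of the argument, and the step I expect to be the main obstacle, is converting these relations into the conclusion by exploiting the triangularity of $M$; in particular ruling out $h>D$. Suppose first that $h>D$. Then the unknowns are $c_0,\dots,c_D$, and the equations for $j=0,\dots,D$ all have vanishing right-hand side (as $j\leq D<h$); since the $(D+1)\times(D+1)$ matrix $(M_{ij})$ is upper triangular with nonzero diagonal, it is invertible, forcing $c_0=\cdots=c_D=0$ and hence $p_h(\A)=\O$. This contradicts $p_h(\A)\neq\O$, which holds because $p_h$ is a nonzero polynomial of degree $h\leq d$ while the minimal polynomial of $\A$ has degree $d+1$. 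Therefore $h\leq D$.

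Finally, with $h\leq D$ established, the unknowns are $c_0,\dots,c_h$ and the same triangular system, now with right-hand sides $0$ for $j<h$ and positive for $j=h$, is solved by forward substitution from $j=0$ upward to yield $c_0=\cdots=c_{h-1}=0$ and $c_h>0$. Thus $p_h(\A)=c_h\A_h$, so $\A_h=c_h^{-1}p_h(\A)$ is a polynomial of degree $h$ in $\A$; Lemma~\ref{lema3.1} then identifies $c_h^{-1}p_h$ with $p_h$, giving $c_h=1$ and $\A_h=p_h(\A)$. This is exactly $h$-punctual distance-regularity, completing the proof.
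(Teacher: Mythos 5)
Your proof is correct and is essentially the paper's own argument: both expand $p_h(\A)$ in the distance basis $\{\A_i\}$, kill the coefficients of $\A_i$ for $i<h$ by an inductive, triangular-system argument based on orthogonality, rule out $h>D$ by deriving the contradiction $p_h(\A)=\O$, and conclude $\A_h=p_h(\A)$ via Lemma \ref{lema3.1}. The only cosmetic difference is the choice of test functions --- you pair against the powers $\A^j$ (the moment matrix $M_{ij}=\langle\A_i,\A^j\rangle$), while the paper pairs against the predistance polynomials $p_i(\A)$ --- which amounts to a change of basis within the same triangularity argument.
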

\begin{proof} If $p_h(\A)\in{\cal D}$, then
$p_h(\A)=\sum_{i=0}^h\zeta_i\A_i$ for some $\zeta_i$, $i=0,1,\dots,h$.
Note first that $ \langle \A_i,p_i(\A)\rangle= \frac
1n\sum_{\partial(u,v)=i}(p_i(\A))_{uv}=\frac{\omega_i}{n}
\sum_{\partial(u,v)=i}(\A^i)_{uv}\neq 0$ for $i \le D$. Now it
follows that $0=\langle p_h(\A),p_0(\A)\rangle= \zeta_0\langle
\A_0,p_0(\A)\rangle$ and hence that $\zeta_0=0$. By using that
$0=\langle p_h(\A),p_i(\A)\rangle$ one can similarly show by
induction that $\zeta_i=0$ for $i<h$. If $h>D$, then this
implies that $p_h(\A)=\O$, which is a contradiction. Hence $h
\le D$ and $\A_h=\frac 1{\zeta_h}p_h(\A)$. By Lemma
\ref{lema3.1} it then follows that $\A_h=p_h(\A)$, i.e., that
$\G$ is $h$-punctually distance-regular.
\end{proof}

Graph F026A from the Foster Census \cite{rcmd09} is an example
of a (bipartite) graph with $D=d=5$, that is $h$-punctually
distance-regular for $h=2$ and $4$, but not for $h=3$ and $5$.
It is interesting to observe, however, that the intersection
number $c_5=3$ is well-defined, whereas
$|\G_1(u)\cap\G_3(v)|=2$ or $3$ for $\dist(u,v)=4$, so $c_4$ is
not well-defined. Thus, there does not seem to be a
combinatorial interpretation in terms of intersection numbers
of the algebraic definition of punctual distance-regularity. In
the next section, the combinatorics will return.

\subsection{Partially distance-polynomial and partially distance-regular
graphs}\label{sec:3.2}

A graph $\G$ is called $m$-{\em partially distance-polynomial}
if $\A_h=q_h(\A)\in {\cal A}$ for every $h\le m$ (that is, $\G$
is $h$-punctually distance-polynomial for every $h\le m$). If
each polynomial $q_h$ has degree $h$, for $h \le m$, we call
the graph $m$-{\em partially distance-regular} (that is, $\G$ is
$h$-punctually distance-regular for every $h\le m$). In this
case, $\A_h=p_h(\A)$ for $h \le m$, by Lemma \ref{lema3.1}.

Alternatively, and recalling the combinatorial properties of
distance-regular graphs, we can say that a graph is
$m$-partially distance-regular when the intersection numbers
$c_i$, $a_i$, $b_i$ up to $c_m$ are well-defined, i.e., the
distance matrices satisfy the recurrence
$$
\A\A_i =
b_{i-1}\A_{i-1}+a_i\A_i+c_{i+1}\A_{i+1}\qquad (i=0,1,\dots,
m-1).
$$
From this we have the following lemma, which may be useful in
finding examples of $m$-partially distance-regular graphs with large
$m$.
\begin{lema}\label{girth}
If $\G$ has girth $g$, then $\G$ is $m$-partially
distance-regular with $m=\lfloor \frac{g-1}{2}\rfloor$.
\end{lema}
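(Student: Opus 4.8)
The plan is to verify directly the combinatorial description of $m$-partial distance-regularity recalled just above the statement: writing $m=\lfloor\frac{g-1}{2}\rfloor$, I will show that the intersection numbers $c_1,\dots,c_m$, $a_0,\dots,a_{m-1}$ and $b_0,\dots,b_{m-2}$ are all well-defined, so that the recurrence $\A\A_i=b_{i-1}\A_{i-1}+a_i\A_i+c_{i+1}\A_{i+1}$ holds for $i=0,\dots,m-1$. For any vertex $v$ and any neighbour $w$ of $v$ one has $\partial(u,w)\in\{\partial(u,v)-1,\partial(u,v),\partial(u,v)+1\}$, so the $(u,v)$-entry of $\A\A_i$ already vanishes unless $\partial(u,v)\in\{i-1,i,i+1\}$; thus the only thing to prove is that, in each of these three cases, the relevant count of neighbours of $v$ does not depend on the choice of $u,v$.

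The key structural input is that geodesics of length at most $m$ are unique, which is exactly where the girth enters through the inequality $2m\le g-1$. First I would establish this uniqueness: two distinct shortest $u$--$v$ paths with $\partial(u,v)=i\le m$ together contain a cycle of length at most $2i\le 2m<g$, contradicting the girth. Granting uniqueness, the three counts are immediate. For $\partial(u,v)=i\le m$, the neighbours of $v$ at distance $i-1$ from $u$ are precisely the penultimate vertices of shortest $u$--$v$ paths, of which there is exactly one, so $c_i=1$. For $i\le m-1$, a neighbour $w$ of $v$ with $\partial(u,w)=\partial(u,v)=i$ would yield a closed walk (the geodesic $u\to v$, the edge $vw$, and the reversed geodesic $w\to u$) of odd length $2i+1\le 2m-1<g$, which must contain an odd cycle shorter than the girth, and this is impossible; hence $a_i=0$. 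Since each of the $\delta$ neighbours of $v$ lies at distance $i-1$, $i$ or $i+1$ from $u$, we get $b_i=\delta-c_i-a_i$, giving $b_0=\delta$ and $b_i=\delta-1$ for $1\le i\le m-1$; all of these are constants.

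I expect the main obstacle to be the routine-but-delicate passage from a short closed walk to a genuine cycle of length less than $g$: for $c_i$ one must note that the symmetric difference of two distinct geodesics is a nonempty even subgraph and therefore contains a cycle, and for $a_i$ one must use that an odd closed walk always supports an odd cycle, each time keeping the resulting cycle length at most $2i$ or $2i+1$. Once the three families of intersection numbers are shown to equal the stated constants in their index ranges, the displayed recurrence holds for $i=0,\dots,m-1$, which is precisely the definition of $m$-partial distance-regularity; by Lemma~\ref{lema3.1} this also yields $\A_h=p_h(\A)$ for $h\le m$. Finally, a short cycle-counting argument shows $D\ge m$, so the matrices $\A_0,\dots,\A_m$ occurring above are genuinely present and the statement is not vacuous.
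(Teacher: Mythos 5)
Your proof is correct and follows essentially the same route as the paper's: the girth bound $2m\le g-1$ forces unique geodesics between vertices at distance at most $m$, which immediately gives the well-defined intersection numbers $c_i=1$, $a_i=0$, $b_0=\delta$, $b_i=\delta-1$ in the required ranges. The paper states this in one sentence; you merely fill in the routine details (cycle extraction from two geodesics, the odd-closed-walk argument for $a_i=0$, and the observation that $D\ge m$), all of which are sound.
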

\begin{proof}
Just note that if the girth is $g$ then there is a unique shortest
path between any two vertices at distance at most $m=\lfloor
\frac{g-1}{2}\rfloor$. Hence the intersection parameters $c_i$,
$b_i$, and $a_i$ up to $c_m$ are well-defined; indeed, if $\G$ has
degree $\delta$, then $c_i=1$, $1\le i\le m$; $a_i=0$, $0\le i\le
m-1$; and $b_0=\delta$, $b_i=\delta-1$, $1\le i\le m-1$.
\end{proof}

Generalized Moore graphs are regular graphs with girth at least
$2D-1$, cf. \cite{ms78, s04}. By Lemma \ref{girth}, such graphs
are $(D-1)$-partially distance-regular. Only few examples of
generalized Moore graphs that are not distance-regular are
known.

It is clear that every $D$-partially distance-polynomial graph is
distance-polynomial, and every $D$-partially distance-regular graph
is distance-regular (in which case $d=D$). In fact, the conditions
can be slightly relaxed as follows.
\begin{propo}
\label{D/d-1 suffices} If $\G$ is $(D-1)$-partially
distance-polynomial, then $\G$ is distance-\linebreak
polynomial. If $\G$ is $(d-1)$-partially distance-regular, then
$\G$ is distance-regular.
\end{propo}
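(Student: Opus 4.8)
The plan is to prove each of the two statements by showing that the single missing piece of distance information can be recovered from the information we already have. Consider the first statement. We are given that $\G$ is $(D-1)$-partially distance-polynomial, so $\A_h \in {\cal A}$ for all $h \le D-1$, and we must show that also $\A_D \in {\cal A}$. The key observation is that the distance matrices partition $\J$, that is, $\sum_{i=0}^D \A_i = \J$. First I would recall that $\J = H(\A) \in {\cal A}$ by the Hoffman polynomial identity (\ref{polHof}), so $\J$ lies in the adjacency algebra. Therefore
$$
\A_D = \J - \sum_{i=0}^{D-1}\A_i = H(\A) - \sum_{i=0}^{D-1} q_i(\A),
$$
which is a polynomial in $\A$, and hence $\A_D \in {\cal A}$. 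This shows $\A_h \in {\cal A}$ for every $h \le D$, so $\G$ is distance-polynomial.

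For the second statement, we assume $\G$ is $(d-1)$-partially distance-regular and must conclude $\G$ is distance-regular. The first thing I would establish is that $D \le d$ always holds (from the linear independence of $\I,\A,\dots,\A^D$ in subsection~\ref{subsec_alg}), so in particular $D-1 \le d-1$; thus $\G$ is certainly $(D-1)$-partially distance-regular, which by the first part means it is distance-polynomial, with $\A_h = p_h(\A)$ of degree exactly $h$ for $h \le d-1$. The crux is then to show $D = d$, for once we know this, the same telescoping argument gives $\A_d = \A_D = H(\A) - \sum_{i=0}^{d-1} p_i(\A)$, and a short degree count (using that the $p_i$ have distinct degrees $0,1,\dots,d$ and $\deg H = d$) shows this equals $p_d(\A)$, making $\A_d = p_d(\A)$ with $\deg p_d = d$, so $\G$ is $d$-punctually distance-regular and hence fully distance-regular.

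The main obstacle, then, is proving $D = d$ from the hypothesis that $\G$ is $(d-1)$-partially distance-regular. The inequality $D \le d$ is free, so I only need $D \ge d$, equivalently to rule out $D \le d-1$. Suppose for contradiction that $D \le d-1$. Then every distance matrix $\A_0,\dots,\A_D$ is a polynomial in $\A$ by the partial distance-regularity hypothesis, with $\A_h = p_h(\A)$ and $\deg p_h = h$ for all $h \le D$. Since the distance matrices span the distance $\circ$-algebra ${\cal D}$ and they are now seen to be polynomials $p_0(\A),\dots,p_D(\A)$ of distinct degrees $0,1,\dots,D$, the space ${\cal D} = \span\{\I,\A,\dots,\A^D\}$ has dimension $D+1$; but ${\cal D}$ always has dimension $D+1$ anyway, so the contradiction must come from the adjacency algebra side. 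The point is that $\J = H(\A)$ has degree $d$ as a polynomial in $\A$, yet $\J = \sum_{i=0}^D \A_i = \sum_{i=0}^D p_i(\A)$ would then be a polynomial of degree at most $D \le d-1$, contradicting $\omega_d = n/\pi_0 \ne 0$ from (\ref{polHof}), i.e. contradicting that the Hoffman polynomial genuinely has degree $d$. Hence $D = d$.

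I expect the degree bookkeeping around the Hoffman polynomial to be the delicate step, since one must be careful that $\sum_{i=0}^{d-1} p_i$ really differs from $H = \sum_{i=0}^{d} p_i$ precisely in the top-degree term $p_d$, so that $\A_d = H(\A) - \sum_{i=0}^{d-1}p_i(\A) = p_d(\A)$ exactly. This uses the uniqueness and distinct degrees of the predistance polynomials from subsection~\ref{subsec_alg}, together with Lemma~\ref{lema3.1} to identify the degree-$h$ polynomial realizing $\A_h$ with $p_h$. Once $D=d$ and $\A_d = p_d(\A)$ are in hand, $\G$ is $h$-punctually distance-regular for every $h \le d = D$, which is exactly distance-regularity.
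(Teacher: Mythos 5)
Your proof is correct, and its engine is the same as the paper's: the Hoffman identity $H=\sum_{i=0}^{d}p_i$ together with $\sum_{i}\A_i=\J=H(\A)$. The first half is verbatim the paper's argument. In the second half, though, you treat $D=d$ as "the crux" and establish it first by a separate contradiction argument; the paper's proof shows this detour is unnecessary. The telescoping identity $\A_d+\sum_{i=0}^{d-1}p_i(\A)=\sum_{i=0}^{d}\A_i=\J=H(\A)$ is valid whether or not $D=d$ (any $\A_i$ with $i>D$ is simply $\O$), and subtracting the hypothesis $\A_i=p_i(\A)$, $i\le d-1$, immediately gives $\A_d=p_d(\A)$. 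Since $p_d(\A)\ne\O$ (no nonzero polynomial of degree at most $d$ annihilates $\A$, the minimal polynomial having degree $d+1$; equivalently $\|p_d(\A)\|^2=p_d(\lambda_0)>0$), this forces $\A_d\ne\O$, i.e.\ $D=d$, as a consequence rather than a prerequisite, and distance-regularity follows. Your contradiction argument is sound---it rests on exactly the same two facts, namely the injectivity of $p\mapsto p(\A)$ on $\R_d[x]$ and $\omega_d=n/\pi_0\ne 0$---and so is your preliminary reduction of the second statement to the first via $D-1\le d-1$; but both steps are redundant once one telescopes at index $d$ directly. What your route buys is an explicit isolation of where $D<d$ would break down; what the paper's buys is a two-line proof.
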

\begin{proof}
Let $\G$ be $(D-1)$-partially distance-polynomial, with
$\A_h=q_h(\A)$, $h\le D-1$. Then by using the expression for
the Hoffman polynomial in (\ref{polHof}), we have:
$$
\A_D +\sum_{h=0}^{D-1}q_h(\A)=
\sum_{h=0}^D\A_h=\J=H(\A),
$$
so that $\A_D=q_D(\A)$, where $q_D=H-\sum_{h=0}^{D-1}q_h$, and
$\G$ is distance-polynomial.

Similarly, if $\G$ is $(d-1)$-partially distance-regular, then
from $\A_d+\sum_{i=0}^{d-1} p_i(\A)=\sum_{i=0}^d\A_i=H(\A)$, we
get $\A_{d}=p_d(\A)$, and $\G$ is distance-regular.
\end{proof}

In particular, Proposition \ref{D/d-1 suffices} implies the
observation by Weichsel \cite{w82} that every (regular) graph
with diameter two is distance-polynomial.

The distinction between $D$ and $d$ in Proposition \ref{D/d-1
suffices} is essential. A $(D-1)$-partially distance-regular
graph is not necessarily distance-regular. In fact, Koolen and
Van Dam [private communication] observed that the direct
product of the folded $(2D-1)$-cube \cite[p. 264]{bcn} and
$K_2$ is $(D-1)$-partially distance-regular with diameter $D$,
but $a_{D-1}$ is not well-defined. Note that these graphs also
occur as so-called boundary graphs in related work
\cite{fgy99}.

It would also be interesting to find examples of $m$-partially
distance-regular graphs with $m$ equal (or close) to $d-2$ that
are not distance-regular (for all $d$), if any exist. More
specifically, we pose the following problem.
\begin{problem}\label{problem1} Determine the smallest $m=m_{pdr}(d)$ such that
every $m$-partially distance-regular graph with $d+1$ distinct
eigenvalues is distance-regular.
\end{problem}

For bipartite graphs, the result in Proposition \ref{D/d-1 suffices}
can be improved as follows.
\begin{propo}
\label{bipD/d-2 suffices} Let $\G$ be bipartite. If $\G$ is
$(D-2)$-partially distance-polynomial, then $\G$ is
distance-polynomial. If $\G$ is $(d-2)$-partially distance-regular,
then $\G$ is distance-regular.
\end{propo}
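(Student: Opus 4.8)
The plan is to replace the single Hoffman identity $\sum_{i=0}^D \A_i = \J = H(\A)$ that drives the proof of Proposition \ref{D/d-1 suffices} by its bipartite refinement. For a bipartite graph the even and odd parts of the Hoffman polynomial satisfy $H_0(\A) = \sum_{i \text{ even}} \A_i$ and $H_1(\A) = \sum_{i \text{ odd}} \A_i$ (the indicator matrices of the same-part and different-part relations of the bipartition, summed over $0 \le i \le D$), so we now have \emph{two} polynomial identities of opposite parity rather than one. The key observation is that $\A_{D-1}$ and $\A_D$ have opposite parity, so each of them appears as an isolated top term in exactly one of these two identities; this is what lets us recover two top distance matrices at once, and hence gain one unit over the non-bipartite threshold.

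For the distance-polynomial statement, I would assume $\A_h = q_h(\A)$ for all $h \le D-2$. In the even identity the largest even index not exceeding $D$ is either $D$ or $D-1$, and every smaller even index is at most $D-2$; hence that top even distance matrix equals $H_0(\A)$ minus a sum of known polynomials $q_i(\A)$, and so lies in $\calA$. The odd identity does the same for the remaining one of $\A_{D-1}, \A_D$. Together with the hypothesis this gives $\A_i \in \calA$ for every $i \le D$, i.e. $\G$ is distance-polynomial.

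For the distance-regular statement, I would assume $\A_h = p_h(\A)$ for all $h \le d-2$ (Lemma \ref{lema3.1}); note first that $\A_{d-2} = p_{d-2}(\A) \ne \O$ forces $D \ge d-2$. Taking, say, $d$ even (the odd case being symmetric), I would subtract the known identity $\sum_{i \text{ even},\, i \le d-2} \A_i = \sum_{i \text{ even},\, i \le d-2} p_i(\A)$ from the even Hoffman identity $\sum_{i \text{ even},\, i \le D} \A_i = \sum_{i \text{ even},\, i \le d} p_i(\A)$. On the right only $p_d(\A)$ survives, while on the left the surviving terms are the even $\A_i$ with $d-2 < i \le D$. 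If $D < d$ this left-hand side is empty, giving $p_d(\A) = \O$, which is impossible since $p_d$ has degree $d$ and $\{\I, \A, \dots, \A^d\}$ is linearly independent; hence $D = d$ and the identity reduces to $\A_d = p_d(\A)$. Treating the odd Hoffman identity the same way yields $\A_{d-1} = p_{d-1}(\A)$. Combined with the hypothesis, $\A_i = p_i(\A)$ for all $i \le d = D$, so $\G$ is distance-regular.

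The routine content is just the parity bookkeeping of the index ranges. The one genuinely delicate point, which I expect to be the main obstacle to state cleanly, is the mismatch between the two ranges in the distance-regular part: the distance matrices run up to $D$ while the predistance polynomials assembled into $H_0$ and $H_1$ run up to $d$. It is exactly this mismatch, forcing the nonzero matrix $p_d(\A)$ to appear after subtraction, that simultaneously rules out $D < d$ and delivers the two top equalities.
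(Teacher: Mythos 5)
Your proof is correct and takes essentially the same approach as the paper: the paper's proof is just the one-line remark that one should repeat the argument of Proposition \ref{D/d-1 suffices} with the even and odd parts $H_0$ and $H_1$ of the Hoffman polynomial in place of $H$, and your argument is precisely the detailed working-out of that remark, splitting the distance matrices by parity and recovering the two top matrices $\A_{D-1},\A_D$ (resp. $\A_{d-1},\A_d$) from the two identities. Your explicit handling of the $D$-versus-$d$ mismatch in the distance-regular part --- forcing $D=d$ because otherwise $p_d(\A)=\O$, which is impossible --- is a point the paper leaves implicit, and you resolve it correctly.
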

\begin{proof}
Similar as the proof of Proposition \ref{D/d-1 suffices};
instead of the Hoffman polynomial, one should use its even and
odd parts $H_{0}$ and $H_{1}$.
\end{proof}

It is interesting to note that a graph with $D=d$ that is
$D$-punctually distance-regular must be distance-regular. This
result is a small part in the proof of the spectral excess theorem,
cf. \cite{vd08,fgg09}. We will generalize this in Proposition
\ref{prop pld curta} by showing that we do not need to have
$h$-punctual distance-regularity for all $h\le m$ to obtain
$m$-partial distance-regularity. The following lemma is a first step
in this direction.

\begin{lema}
\label{lem curt a llarg} Let $d-m<s \le m \le D$ and let $\G$ be
$h$-punctually distance-regular for $h=m-s+1,\dots,m$. Then $\G$
is $(m-s)$-punctually distance-regular.
\end{lema}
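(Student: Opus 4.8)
The plan is to prove the sharper statement that $p_{m-s}(\A)=\A_{m-s}$; since $\deg p_{m-s}=m-s$ this is precisely $(m-s)$-punctual distance-regularity. I would deduce it from two facts about $P:=p_{m-s}(\A)\in\mathcal{A}$: first, that $P$ is supported on the single distance class $m-s$ (that is, $P_{uv}=0$ whenever $\partial(u,v)\neq m-s$), and second, that $P_{uv}=1$ on that class. The first fact is the easy part, coming from a degree bound and the triangle inequality; the second is the crux and is where the hypothesis $d-m<s$ is used.

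For the support I would read off the three-term recurrence (\ref{recur-pol}) at $i=m-s+1$, which gives $\beta_{m-s}P=\A\A_{m-s+1}-\alpha_{m-s+1}\A_{m-s+1}-\gamma_{m-s+2}\A_{m-s+2}$, using $p_{m-s+1}(\A)=\A_{m-s+1}$ and $p_{m-s+2}(\A)=\A_{m-s+2}$ (with the convention $\gamma_{d+1}=0$, which covers the degenerate case $s=1$, where $m=d=D$). The right-hand side vanishes on every class below $m-s$: a neighbour $w$ of $u$ with $\partial(w,v)=m-s+1$ forces $\partial(u,v)\ge m-s$, and the subtracted matrices live on classes $m-s+1$ and $m-s+2$. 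Since $\deg p_{m-s}=m-s$ forces $P$ to vanish on every class above $m-s$, the support of $P$ is pinned to the class $m-s$.

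The main obstacle is showing $P$ is constant on that class, and my plan is to evaluate the Hoffman identity $\J=\sum_{i=0}^d p_i(\A)$ from (\ref{polHof}) at a pair $u,v$ with $\partial(u,v)=m-s$. Terms with $i<m-s$ vanish by degree, and terms with $m-s+1\le i\le m$ vanish because there $p_i(\A)=\A_i$ is a distance matrix with $i\neq m-s$; this is exactly where the full chain of hypotheses is spent. It then remains to kill the top terms $p_i(\A)$ with $m+1\le i\le d$, and for this I would prove by induction on $j$, again via (\ref{recur-pol}), that $p_{m+j}(\A)$ has zero entries on all classes below $m-j$. The base cases $j=0,1$ follow from $p_m(\A)=\A_m$ and from the recurrence at $i=m$ together with $p_{m-1}(\A)=\A_{m-1}$ (this seeds the induction and needs $s\ge 2$, the case $s=1$ having no top terms at all), and the inductive step works because multiplying by $\A$ spreads the support downward by at most one class. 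Since $d-m<s$, every top index $i=m+j$ satisfies $j\le d-m<s$, hence $m-s<m-j$, so each such $p_{m+j}(\A)$ vanishes at our pair. The identity collapses to $1=P_{uv}$, and combined with the support step this yields $P=\A_{m-s}$.

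I expect the genuinely delicate point to be the clean bookkeeping of the two opposing support estimates — the ``downward'' one that confines $P$ to class $m-s$ and the ``upward'' induction that keeps the high-degree terms away from it — together with the boundary conventions ($\gamma_{d+1}=0$ and the degenerate $s=1$ case). It is worth verifying that the hypotheses are consumed exactly as expected: the bottom of the chain drives the support step, the top of the chain seeds the upward induction, the whole chain annihilates the middle terms of the Hoffman identity, and the strict inequality $d-m<s$ is precisely what prevents $p_d(\A)$ and its neighbours from reaching class $m-s$ (equality $d-m=s$ would let the term $p_{m+s}(\A)$ survive, which is consistent with the hypothesis being sharp).
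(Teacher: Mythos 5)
Your proposal is correct and follows essentially the same route as the paper's proof: the degree bound and the recurrence at $i=m-s+1$ pin the support of $p_{m-s}(\A)$ to the class $m-s$ (the paper's cases $(a)$ and $(b)$), and the Hoffman identity combined with the upward induction showing $(p_{m+j}(\A))_{yz}=0$ for $\partial(y,z)<m-j$ gives the value $1$ on that class (the paper's case $(c)$), with $d-m<s$ consumed exactly where the paper consumes it. Your explicit remarks on the $s=1$ boundary case and on sharpness are slightly more careful than the paper's, but the argument is the same.
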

\begin{proof}
By the assumption, we have $\A_{m-s+1}=p_{m-s+1}(\A)$, $\ldots$ ,
$\A_m=p_m(\A)$, and we want to show that $p_{m-s}(\A)=\A_{m-s}$. We
therefore check the entry $uv$ in $p_{m-s}(\A)$, and distinguish the
following three cases:
\begin{itemize}
  \item[$(a)$] For $\partial(u,v)>m-s$, we have $(p_{m-s}(\A))_{uv}=0$.
  \item[$(b)$] For $\partial(u,v)<m-s$, we use the equation
$xp_{m-s+1}=\beta_{m-s}p_{m-s}+\alpha_{m-s+1}p_{m-s+1}+
            \gamma_{m-s+2}p_{m-s+2},$ which gives us
$\A\A_{m-s+1}=\beta_{m-s}p_{m-s}(\A) + \alpha_{m-s+1}\A_{m-s+1} +
\gamma_{m-s+2}\A_{m-s+2}$ (in case $s=1$ we have $m=d$ and then the
last term vanishes). Hence it follows that
$$(p_{m-s}(\A))_{uv}=\frac1{\beta_{m-s}}(\A\A_{m-s+1})_{uv}=
\frac1{\beta_{m-s}}\sum_{w\in \Gamma_1(u)}(\A_{m-s+1})_{wv}=0,$$
since $\partial(v,w)\le \partial(v,u)+\partial(u,w)<m-s+1$
for the relevant $w$.
  \item[$(c)$] For $\partial(u,v)=m-s$, we claim that
$(p_i(\A))_{uv}=0$ for $i\neq m-s$. This is clear if $i<m-s$ and
also if $m-s+1\leq i\leq m$, because then
$(p_i(\A))_{uv}=(\A_i)_{uv}=0$. So, we only need to check that the
entries $(p_{m+1}(\A))_{uv}, (p_{m+2}(\A))_{uv}, \ldots,
(p_{d}(\A))_{uv}$ are zero. To do this, we will show by induction
that $(p_{m+i}(A))_{yz}=0$ if $\partial(y,z)<m-i$ and
$i=0,\dots,d-m$. For $i=0$ this is clear. For $i=1$, this follows
from the equation $\A\A_m = \beta_{m-1}\A_{m-1}+
\alpha_{m}\A_{m}+\gamma_{m+1}p_{m+1}(\A)$ and a similar argument as
in case $(b)$. The induction step then follows similarly: if
$\partial(y,z)<m-i-1$, then the equation
$$\gamma_{m+i+1}p_{m+i+1}(\A)=\A p_{m+i}(\A) -
\alpha_{m+i}p_{m+i}(\A)-\beta_{m+i-1} p_{m+i-1}(\A)$$ and induction
show that $(p_{m+i+1}(\A))_{yz}=0$.

Thus our claim is proven, and by taking the entry $uv$ in the
equation
$$
p_{m-s}(\A)=\J-\sum_{i\neq m-s}p_{i}(\A),
$$
we have $(p_{m-s}(\A))_{uv}=1$.
\end{itemize}
Joining $(a),(b)$, and $(c)$, we obtain that $p_{m-s}(\A)=\A_{m-s}$.
\end{proof}

\begin{propo}
\label{prop pld curta} Let $\lceil d/2\rceil\leq m\leq D$. Then $\G$
is $m$-partially distance-regular if and only if $\G$ is
$h$-punctually distance-regular for $h=2m-d,\dots,m$.
\end{propo}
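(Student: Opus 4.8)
The plan is to prove the two implications separately, with the forward direction being immediate and the reverse direction following from a repeated application of Lemma~\ref{lem curt a llarg}. Recall first that, by definition, $\G$ is $m$-partially distance-regular precisely when it is $h$-punctually distance-regular for \emph{every} $h\le m$, so the whole content of the statement is about turning punctual distance-regularity on the shorter range $h=2m-d,\dots,m$ into punctual distance-regularity on the full range $h=0,\dots,m$.

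For the ``only if'' part there is nothing to do: if $\G$ is $m$-partially distance-regular then it is $h$-punctually distance-regular for all $h\le m$, hence in particular for $h=2m-d,\dots,m$. Note that the hypothesis $m\ge\lceil d/2\rceil$ guarantees $2m-d\ge 0$, so this range of indices is meaningful. For the ``if'' part, assume that $\G$ is $h$-punctually distance-regular for $h=2m-d,\dots,m$, and let me upgrade this to all $h\le m$. If $2m-d=0$ (which happens exactly when $d$ is even and $m=d/2$), the assumption already covers $h=0,\dots,m$ and we are done. Otherwise $2m-d\ge 1$, and I would argue by downward induction, peeling off one index at a time. Concretely, suppose $\G$ is $h$-punctually distance-regular for every $h$ in a block $\{k,k+1,\dots,m\}$ with $1\le k\le 2m-d$. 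Applying Lemma~\ref{lem curt a llarg} with $s=m-k+1$ then yields $(k-1)$-punctual distance-regularity, extending the block down to $\{k-1,\dots,m\}$. Starting from the assumed block $\{2m-d,\dots,m\}$ and iterating until $k=1$ produces $h$-punctual distance-regularity for all $h=0,1,\dots,m$ (the case $h=0$ being automatic), which is exactly $m$-partial distance-regularity.

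I do not expect a genuine obstacle beyond the bookkeeping of the index conditions, and this is also where the precise range $\lceil d/2\rceil\le m\le D$ gets used. The condition $s\le m$ of the lemma is equivalent to $k\ge 1$, while $d-m<s$ is equivalent to $k\le 2m-d$; both therefore hold at every step, since the starting index is $2m-d$ and it only decreases. In particular the very first step, with $k=2m-d$ and $s=d-m+1$, satisfies $s\le m$ exactly because $2m-d\ge 1$, i.e.\ because $m\ge\lceil d/2\rceil$; the upper bound $m\le D$ is needed merely so that the lemma applies at all. The real work has already been carried out in Lemma~\ref{lem curt a llarg}, which supplies the single-step descent, so the present proposition is essentially an inductive packaging of that lemma.
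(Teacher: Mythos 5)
Your proof is correct and takes essentially the same approach as the paper: the paper's one-line proof consists of applying Lemma~\ref{lem curt a llarg} repeatedly for $s=d-m+1,\dots,m$, which is exactly your downward induction with $k=m-s+1$ running from $2m-d$ to $1$. Your extra bookkeeping (the trivial ``only if'' direction, the edge case $2m-d=0$, and the verification that the lemma's index conditions hold at every step) merely makes explicit what the paper leaves implicit.
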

\begin{proof} This follows from applying Lemma \ref{lem curt a
llarg} repeatedly for $s=d-m+1,\dots,m$.
\end{proof}

As mentioned, this is a generalization of the following, which
follows by taking $m=D=d$.
\begin{coro}\label{D-punc->drg} \cite{fgy1b} Let $\G$ be a graph with
spectrally maximum diameter $D=d$. Then $\G$ is distance-regular
if and only if it is $D$-punctually distance-regular.
\end{coro}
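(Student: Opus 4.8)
The plan is to obtain the corollary as the degenerate special case $m=D=d$ of Proposition \ref{prop pld curta}, so that almost all of the work is already packaged there. First I would verify that the hypotheses of that proposition are met: taking $m=D=d$, the condition $\lceil d/2\rceil\le m\le D$ reduces to $\lceil d/2\rceil\le d$, which holds for every $d$. Hence the proposition applies and asserts that $\G$ is $m$-partially distance-regular if and only if $\G$ is $h$-punctually distance-regular for $h=2m-d,\dots,m$.

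The key bookkeeping step is to notice that, with $m=D=d$, the index range $h=2m-d,\dots,m$ collapses: the lower endpoint is $2d-d=d$ and the upper endpoint is $m=d=D$, so the range is the single value $h=D$. Thus the proposition yields the equivalence that $\G$ is $D$-partially distance-regular if and only if $\G$ is $D$-punctually distance-regular. In other words, under spectrally maximum diameter, full partial distance-regularity up to $D$ is already forced by the single punctual condition at $h=D$, with no intermediate assumptions at $h<D$ needed.

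To close the loop I would invoke the elementary observation recorded just before Proposition \ref{D/d-1 suffices}, namely that $\G$ is distance-regular if and only if it is $D$-partially distance-regular; the reverse implication is exactly that observation, while the forward implication is immediate, since a distance-regular graph has $d=D$ and $\A_h=p_h(\A)$ with $\deg p_h=h$ for all $h\le D$. Chaining this with the equivalence of the previous paragraph gives that $\G$ is distance-regular if and only if it is $D$-punctually distance-regular, as required. There is no genuine obstacle at the level of the corollary, since Proposition \ref{prop pld curta} carries all the substance; the only point demanding care is the index arithmetic showing that the hypothesis list shrinks to the lone condition $h=D$, and the fact that the forward direction is trivial, so that the entire content of the statement lies in the reverse direction supplied by the proposition.
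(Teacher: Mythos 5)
Your proposal is correct and is precisely the paper's own argument: the corollary is stated there as the special case $m=D=d$ of Proposition~\ref{prop pld curta}, where the range $h=2m-d,\dots,m$ collapses to the single value $h=D$, combined with the earlier observation that $D$-partial distance-regularity is equivalent to distance-regularity. Your fleshing out of the index arithmetic and of the trivial forward direction matches what the paper leaves implicit.
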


The following is a new variation on this theme. Note that we
will return to the case $D=d$ in Section \ref{specmaxdiam}.
\begin{coro}
\label{prop dos en A dr} Let $\G$ be a graph with spectrally
maximum diameter $D=d$. Then $\G$ is distance-regular if and
only if it is $(D-1)$-punctually distance-regular and
$(D-2)$-punctually distance-regular.
\end{coro}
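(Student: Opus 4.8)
The plan is to derive the nontrivial (backward) implication by combining two earlier general results, Proposition \ref{prop pld curta} and Proposition \ref{D/d-1 suffices}, rather than arguing from scratch. The forward implication is immediate: if $\G$ is distance-regular, then $\A_h=p_h(\A)$ with $\deg p_h=h$ for every $h\le D$, so in particular $\G$ is $h$-punctually distance-regular for $h=D-1$ and $h=D-2$.

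For the converse, I would apply Proposition \ref{prop pld curta} with the specific choice $m=D-1$. Since $D=d$, this gives $2m-d=d-2$ and $m=d-1$, so the range $h=2m-d,\dots,m$ is exactly $h=d-2,d-1$; using $D-1=d-1$ and $D-2=d-2$, these are precisely the two punctual hypotheses we are given. First I would check that the degree hypothesis $\lceil d/2\rceil\le m\le D$ of Proposition \ref{prop pld curta} is met for $m=d-1$: the upper bound $d-1\le d$ is clear, and the lower bound $\lceil d/2\rceil\le d-1$ holds whenever $d\ge 2$, which we may assume since the statement already presupposes $D-2\ge 0$. Proposition \ref{prop pld curta} then yields that $\G$ is $(d-1)$-partially distance-regular.

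Finally, I would invoke the second part of Proposition \ref{D/d-1 suffices}, which states that a $(d-1)$-partially distance-regular graph is distance-regular; this closes the argument. There is essentially no hard step here: the content lies entirely in locating the right instance of Proposition \ref{prop pld curta}, and the only point requiring care is the verification of its degree hypothesis $\lceil d/2\rceil\le d-1$, i.e. that $d\ge 2$. (Equivalently, one may bypass Proposition \ref{prop pld curta} and apply Lemma \ref{lem curt a llarg} directly, with $m=d-1$ and $s=2,3,\dots,d-1$ in turn, which is exactly the repeated computation inside the proof of Proposition \ref{prop pld curta}; this route also makes transparent that the present corollary is the ``$s=2$ refinement'' of Corollary \ref{D-punc->drg}, where one no longer needs $D$-punctual distance-regularity but only $(D-1)$- and $(D-2)$-punctual distance-regularity.)
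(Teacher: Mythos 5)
Your proof is correct and takes essentially the same route the paper intends: the paper states this as an unproved corollary immediately after Proposition \ref{prop pld curta}, and the intended derivation is exactly yours, namely applying Proposition \ref{prop pld curta} with $m=D-1=d-1$ (so that $2m-d=d-2$ and the required punctual hypotheses are precisely at $h=D-2,D-1$) and then finishing with the second part of Proposition \ref{D/d-1 suffices}. Your check of the hypothesis $\lceil d/2\rceil\le d-1$ and the alternative unwinding via Lemma \ref{lem curt a llarg} are both consistent with the paper's framework and add nothing that conflicts with it.
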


\subsection{Punctually walk-regular and punctually spectrum-regular
graphs}\label{sec:3.3} In a manner similar to the previous
sections, we will now generalize the concept of
walk-regularity. We say that a graph $\G$ is
$h$-\emph{punctually walk-regular}, for some $h\le D$, if for
every $\ell \ge 0$ the number of walks of length $\ell$ between
a pair of vertices $u,v$ at distance $h$ does not depend on
$u,v$. If this is the case, we write
$a_{uv}^{(\ell)}=(\A^{\ell})_{uv}=a_h^{(\ell)}$.

Similarly, we say that a graph $\G$ is \emph{$h$-punctually
spectrum-regular} for a given $h\leq D$ if, for any $i \le d$, the
crossed $uv$-local multiplicities of $\lambda_i$ are the same for
all vertices $u,v$ at distance $h$. In this case, we write
$m_{uv}(\lambda_i)=m_{hi}$. Notice that, for $h=0$, these concepts
are equivalent, respectively, to walk-regularity and
spectrum-regularity. As we saw, the latter two are also equivalent
to each other. In fact, as an immediate consequence of
(\ref{crossed-mul->num-walks}) and (\ref{num-walks->crossed-mul}),
the analogous result holds for any given value of $h$.
\begin{lema}
\label{pwr=psr} Let $h\le D$. Then $\G$ is $h$-punctually
walk-regular if and only if it is $h$-punctually
spectrum-regular.
\end{lema}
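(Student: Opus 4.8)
The plan is to establish the equivalence by tracing through the two conversion formulas \eqref{crossed-mul->num-walks} and \eqref{num-walks->crossed-mul} and observing that each direction is a bijective (indeed linear, invertible) correspondence between the data $(a_{uv}^{(\ell)})_{\ell\ge0}$ and $(m_{uv}(\lambda_i))_{i=0}^d$ for a fixed pair $u,v$. The key point is that both families of parameters are really two encodings of the same information, so a regularity statement phrased in terms of one family transfers verbatim to the other.

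First I would fix $h\le D$ and consider any two pairs of vertices $u,v$ and $u',v'$ with $\partial(u,v)=\partial(u',v')=h$. Assume $\G$ is $h$-punctually walk-regular, so $a_{uv}^{(\ell)}=a_{u'v'}^{(\ell)}$ for all $\ell\ge0$. By \eqref{num-walks->crossed-mul}, $m_{uv}(\lambda_i)=\lambda_i^*(\mathcal{C}_{uv})$, which depends only on the eigenvalues (through the polynomials $\lambda_i^*$) and on the tuple $\mathcal{C}_{uv}=(a_{uv}^{(0)},\dots,a_{uv}^{(d)})$. Since the tuples $\mathcal{C}_{uv}$ and $\mathcal{C}_{u'v'}$ agree by hypothesis, we get $m_{uv}(\lambda_i)=m_{u'v'}(\lambda_i)$ for every $i\le d$; hence $\G$ is $h$-punctually spectrum-regular.

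Conversely, assume $\G$ is $h$-punctually spectrum-regular, so $m_{uv}(\lambda_i)=m_{u'v'}(\lambda_i)$ for all $i$. By \eqref{crossed-mul->num-walks}, $a_{uv}^{(\ell)}=\sum_{i=0}^d m_{uv}(\lambda_i)\lambda_i^\ell$, which is determined by the crossed local multiplicities and the (fixed) eigenvalues. Therefore $a_{uv}^{(\ell)}=a_{u'v'}^{(\ell)}$ for every $\ell\ge0$, and $\G$ is $h$-punctually walk-regular. This completes both directions.

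I expect no serious obstacle here, since the lemma is essentially a restatement of the already-established dictionary between walks and crossed local multiplicities; the excerpt even flags it as ``an immediate consequence'' of the two displayed formulas. The only mild subtlety worth being careful about is that \eqref{crossed-mul->num-walks} runs over all $\ell\ge0$ while \eqref{num-walks->crossed-mul} uses only $\ell=0,\dots,d$: one must note that knowing $a_{uv}^{(\ell)}$ for $\ell=0,\dots,d$ already determines $\mathcal{C}_{uv}$ and hence all the $m_{uv}(\lambda_i)$, which in turn determine $a_{uv}^{(\ell)}$ for every $\ell$ via \eqref{crossed-mul->num-walks}; so the finite data $\mathcal{C}_{uv}$ and the infinite walk sequence carry the same content, and the equivalence is clean.
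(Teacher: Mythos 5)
Your proof is correct and follows exactly the route the paper intends: the paper gives no separate proof of this lemma, declaring it ``an immediate consequence'' of (\ref{crossed-mul->num-walks}) and (\ref{num-walks->crossed-mul}), and your argument simply spells out that implication (tuples $\mathcal{C}_{uv}$ determine the $m_{uv}(\lambda_i)$ via $\lambda_i^*$, and conversely the $m_{uv}(\lambda_i)$ determine all $a_{uv}^{(\ell)}$). Your closing remark that the finite data $a_{uv}^{(0)},\dots,a_{uv}^{(d)}$ already carries the full information is a worthwhile clarification, consistent with the paper's earlier observation that the constants $a^{(0)},\dots,a^{(d)}$ suffice for walk-regularity.
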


The following lemma turns out to be very useful for checking
punctual walk-regularity; we will use this in the proofs of
Propositions \ref{mpdr->2m+1-d} and \ref{G is d-psr}.
\begin{lema}
\label{cheking m-wr} Let $h\le D$. If, for each $\ell \le d-1$, the number of walks in
$\G$ of length $\ell$ between vertices $u$ and $v$ such that $\dist(u,v)=h$ does not depend on $u$ and $v$,
then $\G$ is $h$-punctually walk-regular. Also, if $\G$ is
bipartite and, for each $\ell \le d-2$, the number of walks in
$\G$ of length $\ell$ between vertices $u$ and $v$ such that $\dist(u,v)=h$ does not depend on $u$ and $v$,
then $\G$ is $h$-punctually walk-regular.
\end{lema}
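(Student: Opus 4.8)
The plan is to translate everything into the language of crossed local multiplicities and reduce to a Vandermonde system. By Lemma \ref{pwr=psr} it suffices to prove $h$-punctual spectrum-regularity, that is, that $m_{uv}(\lambda_i)$ is the same for all $u,v$ with $\dist(u,v)=h$, for every $i=0,1,\dots,d$. Recall from (\ref{crossed-mul->num-walks}) that
$$
a_{uv}^{(\ell)} = \sum_{i=0}^d m_{uv}(\lambda_i)\lambda_i^\ell \qquad (\ell \ge 0).
$$
Since the $d+1$ eigenvalues $\lambda_0,\dots,\lambda_d$ are distinct, the square Vandermonde matrix $(\lambda_i^\ell)_{0\le \ell,i\le d}$ is invertible; hence the tuple $(m_{uv}(\lambda_0),\dots,m_{uv}(\lambda_d))$ is determined by, and determines, the walk counts $(a_{uv}^{(0)},\dots,a_{uv}^{(d)})$. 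A priori this needs the $d+1$ values $\ell=0,\dots,d$, whereas the hypothesis only controls $\ell\le d-1$. The whole point is that one multiplicity comes for free.

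The key observation is that, since $\G$ is connected and $\delta$-regular, $\lambda_0=\delta$ is simple with eigenvector $\j$, so the corresponding idempotent is $\E_0=\frac1n\J$ and therefore $m_{uv}(\lambda_0)=(\E_0)_{uv}=\frac1n$ for all $u,v$, independent of $u,v$. Using this, I would rewrite the above equations for $\ell=0,\dots,d-1$ as
$$
a_{uv}^{(\ell)}-\tfrac1n\lambda_0^\ell = \sum_{i=1}^d m_{uv}(\lambda_i)\lambda_i^\ell \qquad (\ell=0,\dots,d-1),
$$
a system of $d$ equations in the $d$ unknowns $m_{uv}(\lambda_1),\dots,m_{uv}(\lambda_d)$ whose coefficient matrix $(\lambda_i^\ell)_{0\le\ell\le d-1,\,1\le i\le d}$ is again Vandermonde in the distinct nodes $\lambda_1,\dots,\lambda_d$, hence invertible. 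By hypothesis every left-hand side is independent of $u,v$ (for $\dist(u,v)=h$), so the unique solution --- the remaining multiplicities $m_{uv}(\lambda_i)$ --- is independent of $u,v$ as well. This proves the first statement.

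For the bipartite case I would find a second free multiplicity. When $\G$ is connected, bipartite and $\delta$-regular, $\lambda_d=-\delta$ is also simple, with eigenvector equal to $\frac1{\sqrt n}$ on one part and $-\frac1{\sqrt n}$ on the other; thus $\E_d$ has entries $(\E_d)_{uv}=\frac{(-1)^{\dist(u,v)}}{n}$, so that $m_{uv}(\lambda_d)=\frac{(-1)^h}{n}$ is again the same for all $u,v$ at distance $h$. Subtracting both $\frac1n\lambda_0^\ell$ and $\frac{(-1)^h}{n}\lambda_d^\ell$ leaves a system for $\ell=0,\dots,d-2$ in the $d-1$ unknowns $m_{uv}(\lambda_1),\dots,m_{uv}(\lambda_{d-1})$, with an invertible $(d-1)\times(d-1)$ Vandermonde coefficient matrix in the distinct nodes $\lambda_1,\dots,\lambda_{d-1}$; the same argument then yields the conclusion from walks of length $\ell\le d-2$ only. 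The only genuine step is the ``free multiplicity'' observation; once the special idempotents $\E_0$ (and $\E_d$ in the bipartite case) are identified, everything reduces to invertibility of a Vandermonde matrix, so I expect no real obstacle beyond checking carefully that $m_{uv}(\lambda_d)$ depends only on the parity of $h$.
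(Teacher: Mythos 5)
Your proof is correct, and it takes a genuinely different route from the paper's. The paper stays entirely on the walk-count side: it rewrites the Hoffman identity $H(\A)=\J$ in monic form, $\A^d+\eta_{d-1}\A^{d-1}+\cdots+\eta_0\I=\frac{\pi_0}{n}\J$, which expresses the missing count $a_{uv}^{(d)}$ as a fixed affine combination of the counts $a_h^{(0)},\dots,a_h^{(d-1)}$ supplied by the hypothesis; since $\{\I,\A,\dots,\A^d\}$ is a basis of ${\cal A}$, every higher power of $\A$ is a linear combination of these, so all walk counts are constant. In the bipartite case the paper argues by parity cases: if $h\equiv d\pmod 2$ then $a_h^{(d-1)}=0$ and the general argument applies, while if $h\not\equiv d\pmod 2$ then $a_h^{(d)}=0$ and one solves the Hoffman identity for $a_{uv}^{(d-1)}$, using that $\eta_{d-1}=\delta\neq 0$. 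You instead work on the spectral side via Lemma \ref{pwr=psr}: you exploit that the projections $\E_0=\frac1n\J$ and, for bipartite graphs, $\E_d$ with $(\E_d)_{uv}=(-1)^{\dist(u,v)}/n$ are known a priori, subtract their contributions, and invert a Vandermonde system for the remaining crossed multiplicities. The two proofs rest on the same underlying information --- the Hoffman polynomial is exactly $n\lambda_0^*$, i.e., the statement $\E_0=\frac1n\J$ --- but your bipartite treatment is arguably cleaner: the single observation $m_{uv}(\lambda_d)=(-1)^h/n$ replaces the paper's case distinction on the parities of $h$ and $d$ and its computation of $\eta_{d-1}$. What the paper's version buys in exchange is economy of machinery: it manipulates only integer walk counts and one polynomial identity, never needing the explicit eigenprojections or the invertibility of a Vandermonde matrix.
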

\begin{proof}
By using the Hoffman polynomial $H$ we know that
\begin{equation}\label{cH(A)=J}
\frac{\pi_0}{n}H(\A)=\A^d+\eta_{d-1}\A^{d-1}+\cdots
+\eta_0\I=\frac{\pi_0}{n}\J.
\end{equation}
Let $u,v$ be vertices at distance $h$. Then the existence of
the constants $a_{h}^{(\ell)}$, $\ell \le d-1$, assures that
$$
a_{uv}^{(d)}=(\A^d)_{uv}=\frac{\pi_0}{n}-\eta_{d-1}a_{h}^{(d-1)}-\cdots
-\eta_0 a_{h}^{(0)}
$$
is also constant. From the fact that $\{\I,\A,\ldots,\A^d\}$ is
a basis of ${\cal A}$, it then follows that $\G$ is
$h$-punctually distance-regular. Now let $\G$ be bipartite. If
$h$ and $d$ have the same parity, then $a_h^{(d-1)}=0$, and the
result follows as in the general case. If $h$ and $d$ have
different parities, then $a_h^{(d)}=0$. Now it follows from
(\ref{cH(A)=J}) that if $a_{uv}^{(\ell)}$ is a constant for
$\ell\le d-2$, then $a_{uv}^{(d-1)}$ also is. Here we use that
$\eta_{d-1}=\delta \neq 0$ because $\G$ is bipartite (and hence
$\lambda_i=-\lambda_{d-i}$, $0\le i\le d$). Hence $\G$ is
$h$-punctually distance-regular.
\end{proof}

Next we will show that $1$-punctual walk-regularity implies
walk-regularity. Later we will generalize this result in
Proposition \ref{h-punc->walkregular}.
\begin{propo}\label{1-puncw->walk} Let $\G$ be $1$-punctually
walk-regular. Then $\G$ is walk-regular (and spectrum-regular)
with $a_0^{(\ell)}=\delta a_1^{(\ell-1)}$ for $\ell>1$, and
$m_{1i}=\frac{\lambda_i}{\lambda_0}\frac{m_i}n$ for
$i=0,1,\dots,d$.
\end{propo}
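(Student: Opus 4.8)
The plan is to decompose each closed walk by its first step and use $\delta$-regularity together with the hypothesis. For a vertex $u$ and $\ell\ge 1$ I would write
$$a_u^{(\ell)}=(\A^\ell)_{uu}=\sum_{w\sim u}(\A^{\ell-1})_{wu}=\sum_{w\sim u}a_{wu}^{(\ell-1)}.$$
Each neighbour $w$ of $u$ satisfies $\partial(u,w)=1$, so $1$-punctual walk-regularity replaces every summand by the constant $a_1^{(\ell-1)}$, and since $u$ has exactly $\delta$ neighbours I get $a_u^{(\ell)}=\delta\, a_1^{(\ell-1)}$. The right-hand side is independent of $u$, so $\G$ is walk-regular (hence spectrum-regular, by the equivalence of the two notions recorded in the preliminaries), and writing the common value as $a_0^{(\ell)}$ gives the stated identity $a_0^{(\ell)}=\delta a_1^{(\ell-1)}$; for $\ell=1$ it merely reads $0=0$, which is why it is stated for $\ell>1$.

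For the crossed multiplicities I would pass from walks to the idempotents $\E_i$. Spectrum-regularity yields $(\E_i)_{uu}=m_u(\lambda_i)=m_i/n$ for every $u$, while Lemma~\ref{pwr=psr} turns $1$-punctual walk-regularity into $1$-punctual spectrum-regularity, so that $(\E_i)_{uv}=m_{uv}(\lambda_i)=m_{1i}$ whenever $\partial(u,v)=1$. I would then read the $(u,u)$-entry of the eigenvalue equation $\A\E_i=\lambda_i\E_i$:
$$\delta\, m_{1i}=\sum_{w\sim u}(\E_i)_{wu}=(\A\E_i)_{uu}=\lambda_i(\E_i)_{uu}=\lambda_i\frac{m_i}{n},$$
using once more that the $\delta$ neighbours of $u$ are exactly the vertices at distance $1$. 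Dividing by $\delta=\lambda_0$ gives $m_{1i}=\frac{\lambda_i}{\lambda_0}\frac{m_i}{n}$, as required.

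I do not expect a real obstacle here: the whole argument is a single one-step decomposition (first of walks, then of the projections) combined with $\delta$-regularity, and every tool it needs --- the equivalence of walk- and spectrum-regularity, Lemma~\ref{pwr=psr}, the identities $(\E_i)_{uu}=m_u(\lambda_i)$ and $\A\E_i=\lambda_i\E_i$, and $\lambda_0=\delta$ --- is already established. The only point to watch is the degenerate case $\ell=1$ noted above. An alternative derivation of the last formula would avoid the idempotents entirely: using (\ref{crossed-mul->num-walks}) with $m_u(\lambda_i)=m_i/n$ and the constants $m_{1i}$, one could expand both sides of $a_0^{(\ell)}=\delta a_1^{(\ell-1)}$ and match the coefficients of the distinct powers $\lambda_i^{\ell-1}$ by a Vandermonde argument. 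Since the idempotent computation is shorter, I would carry that one out.
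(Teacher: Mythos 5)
Your proposal is correct and follows essentially the same route as the paper's own proof: the same first-step decomposition $a_u^{(\ell)}=\sum_{w\in\Gamma_1(u)}(\A^{\ell-1})_{wu}=\delta a_1^{(\ell-1)}$ for walk-regularity, followed by reading the $(u,u)$-entry of $\A\E_i=\lambda_i\E_i$ to get $\lambda_0 m_{1i}=\lambda_i\frac{m_i}{n}$. In fact your write-up is slightly cleaner, since you cite the correct equivalence (Lemma~\ref{pwr=psr}) for passing from $1$-punctual walk-regularity to $1$-punctual spectrum-regularity, where the paper's text cites Lemma~\ref{lema3.1}, apparently a typographical slip.
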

\begin{proof} For a vertex $u$ and $\ell>0$ we have that
$a_{uu}^{(\ell)}=(\A^{\ell})_{uu}=\sum_{v \in
\Gamma_1(u)}(\A^{\ell-1})_{uv} =\delta a_1^{(\ell-1)}$, which
shows that $\G$ is walk-regular with $a_0^{(\ell)}=\delta
a_1^{(\ell-1)}$. Then $\G$ is also $1$-punctually
spectrum-regular and spectrum-regular by Lemma \ref{lema3.1},
and then $\lambda_0 m_{1i} = \sum_{v \in
    \Gamma_1(u)}(\E_i)_{vu} = (\A\E_i)_{uu} = \lambda_i(\E_i)_{uu} = \lambda_i
    \frac{m_i}{n}$, which finishes the proof.
    \end{proof}

Interesting examples of punctually walk-regular graphs can be
obtained from association schemes.
\begin{propo}
\label{theo-squemes} Let $\{\B_0=\I, \B_1, \dots, \B_e\}$ be an
association scheme and let $\G$ be one of the graphs in this
scheme. If also its distance-$h$ graph $\G_h$ is in the scheme,
then $\G$ is $h$-punctually walk-regular.
\end{propo}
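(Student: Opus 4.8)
The plan is to exploit two structural facts about the Bose--Mesner algebra $\mathcal{B}=\span\{\B_0,\B_1,\dots,\B_e\}$: that it is closed under the ordinary matrix product, and that the $\B_i$ form a $(0,1)$-partition of $\J$. Since $\G$ is one of the graphs in the scheme, its adjacency matrix is one of the scheme matrices, say $\A=\B_r$. Hence $\A\in\mathcal{B}$, and because $\mathcal{B}$ is an algebra under ordinary multiplication, every power $\A^{\ell}$ (for $\ell\ge 0$) also belongs to $\mathcal{B}$. Thus I can write
$$
\A^{\ell}=\sum_{i=0}^e c_i^{(\ell)}\B_i
$$
for suitable scalars $c_i^{(\ell)}$ that depend only on $\ell$ (and on the scheme), not on any pair of vertices.

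Next I would invoke the hypothesis that $\G_h$ is in the scheme, i.e. $\A_h=\B_s$ for some index $s$. The key observation is that since $\B_0,\dots,\B_e$ are $(0,1)$-matrices summing to $\J$, for any fixed pair $u,v$ exactly one of the matrices $\B_i$ has a $1$ in position $(u,v)$. If $\dist(u,v)=h$, then $(\A_h)_{uv}=(\B_s)_{uv}=1$, so $(\B_i)_{uv}=\delta_{is}$ for all $i$. Reading off the $(u,v)$-entry in the displayed expansion then yields
$$
a_{uv}^{(\ell)}=(\A^{\ell})_{uv}=\sum_{i=0}^e c_i^{(\ell)}(\B_i)_{uv}=c_s^{(\ell)},
$$
which is independent of the particular pair $u,v$ at distance $h$. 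Setting $a_h^{(\ell)}=c_s^{(\ell)}$ then shows, for every $\ell\ge 0$, that $\G$ is $h$-punctually walk-regular.

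I expect no genuine obstacle here: the argument is a direct verification once the two structural facts about association schemes are used. The only points deserving care are that the expansion coefficients $c_i^{(\ell)}$ are truly vertex-independent (they arise from the scheme's structure constants under ordinary multiplication, not from the particular entry examined), and that the distance relation $\dist(u,v)=h$ selects a single scheme matrix $\B_s$; both follow immediately from the definition of an association scheme recalled in Section \ref{subsec_alg}.
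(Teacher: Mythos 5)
Your proposal is correct and follows essentially the same route as the paper's proof: write $\A=\B_r$, use closure of the Bose--Mesner algebra under the ordinary product to expand $\A^{\ell}=\sum_i c_i^{(\ell)}\B_i$ with vertex-independent coefficients, and read off the $(u,v)$-entry using $\A_h=\B_s$. Your added remark that the $(0,1)$-matrices partition $\J$ (so $(\B_i)_{uv}=\delta_{is}$ when $\dist(u,v)=h$) just makes explicit a step the paper leaves implicit.
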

\begin{proof} By the assumption there are $i,k$ such that $\A=\B_i$ and
$\A_h=\B_k$.
Let $u,v$ be vertices at distance $h$ in $\G$. Because the
Bose-Mesner algebra ${\cal B}$ is closed under the ordinary product,
there are constants $c_{j\ell}$ such that
$$(\A^{\ell})_{uv}=(\B_{i}^{\ell})_{uv} = (\sum_{j=0}^e
c_{j\ell} \B_j)_{uv}=c_{k\ell}.$$ So $\G$ is $h$-punctually
walk-regular.
\end{proof}

In fact, this proposition shows that any graph in an
association scheme is $h$-punctually walk-regular for $h=0$
($\A_0=\B_0$) and $h=1$ ($\A_1=\B_i$). Note that because of our
restriction in this paper to connected graphs, we should
(formally speaking) say that each of the connected components
of a graph in an association scheme is $h$-punctually
walk-regular for $h=0,1$. Specific examples with other $h$ will
show up in the next section. Related to this observation about
graphs in association schemes is the concept of a coherent graph,
as discussed by Klin, Muzychuk, and Ziv-Av \cite{kmm}. Roughly speaking,
an (undirected connected) graph $\G$ is coherent if it is in
the smallest association scheme (coherent configuration) whose Bose-Mesner algebra contains
the adjacency algebra of $\G$.

\subsection{$m$-Walk-regular graphs}\label{sec:3.4}
In~\cite{DaFiGa09}, the concept of $m$-walk-regularity was
introduced: For a given integer $m\le D$, we say that $\G$ is
{\em $m$-walk-regular} if the number of walks $a_{uv}^{(\ell)}$
of length ${\ell}$ between vertices $u$ and $v$ only depends on
their distance $h$, provided that $h \le m$. In other words,
$\G$ is $m$-walk-regular if it is $h$-punctually walk-regular
for every $h\leq m$. Obviously, $0$-walk-regularity is the same
concept as walk-regularity.

Similarly, a graph is called $m$-spectrum-regular graph if it
is $h$-punctually spectrum-regular for all $h \le m$. By
Lemma~\ref{pwr=psr}, this is equivalent to $m$-walk-regularity.
Moreover, in~\cite{DaFiGa09}, $m$-walk-regular graphs were
characterized as those graphs for which $\A_i$ looks the same
as $p_i(\A)$ for every $i$ when looking through the `window'
defined by the matrix $\A_0+\A_1+\cdots+\A_m$. A generalization
of this will be proved in the next section.
\begin{propo} \cite{DaFiGa09}
\label{m-cami-reg=m-espect-reg} Let $m\le D$. Then $\G$ is
$m$-walk-regular (and $m$-spectrum-regular) if and only if
$p_i(\A) \circ \A_j = \delta_{ij} \A_i$ for $i=0,1,\dots,d$ and
$j =0,1,\dots,m$.
\end{propo}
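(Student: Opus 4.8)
The plan is to prove the two implications separately; the reverse one is short, while the forward one carries the real content. By Lemma~\ref{pwr=psr} it suffices to work with $m$-walk-regularity, and recall that this means $h$-punctual walk-regularity for every $h\le m$. Unwinding the Hadamard product, the stated condition says exactly that $(p_i(\A))_{uv}=\delta_{ij}$ whenever $\partial(u,v)=j\le m$; in words, inside the window of distances at most $m$ each matrix $p_i(\A)$ agrees entrywise with $\A_i$. For the reverse implication I would use that $\deg p_i=i$, so $\{p_0(\A),\dots,p_d(\A)\}$ is a basis of $\calA$ and every power expands as $\A^\ell=\sum_{i=0}^d\rho_i^{(\ell)}p_i(\A)$ with coefficients $\rho_i^{(\ell)}$ independent of the vertices. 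Taking the $uv$-entry for $\partial(u,v)=j\le m$ and applying the hypothesis gives $(\A^\ell)_{uv}=\rho_j^{(\ell)}$, which depends only on $j$ and $\ell$; hence $\G$ is $h$-punctually walk-regular for every $h\le m$.

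For the forward implication, assume $m$-walk-regularity. Then for $\partial(u,v)=h\le m$ every entry $(q(\A))_{uv}$ depends only on $h$, so in particular $c_i(h):=(p_i(\A))_{uv}$ is well defined for $h\le m$. Two observations come for free: $c_i(h)=0$ for $i<h$ (a polynomial of degree below $h$ has zero $uv$-entry at distance $h$), and $\sum_i c_i(h)=1$ for each $h\le m$, since $\sum_i p_i(\A)=H(\A)=\J$ by (\ref{polHof}). I would then show $c_i(h)=\delta_{ih}$ by induction on $h$. The base case $h=0$ uses spectrum-regularity (which $m$-walk-regularity implies): $(p_i(\A))_{uu}=\sum_j p_i(\lambda_j)m_u(\lambda_j)=\frac1n\sum_j m_j p_i(\lambda_j)=\langle p_i,1\rangle=\delta_{i0}$ by (\ref{product}) and the orthogonality of the $p_i$.

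For the inductive step, fix $1\le h\le m$ and assume $c_i(h')=\delta_{ih'}$ for all $h'<h$. I would evaluate the recurrence (\ref{recur-pol}), $\A p_i(\A)=\beta_{i-1}p_{i-1}(\A)+\alpha_i p_i(\A)+\gamma_{i+1}p_{i+1}(\A)$, at a $uv$-entry with $\partial(u,v)=h-1$. Using the induction hypothesis, the right-hand side collapses to $\beta_{h-1}\delta_{ih}+\alpha_{h-1}\delta_{i,h-1}+\gamma_{h-1}\delta_{i,h-2}$, while the left-hand side is $\sum_{w\sim u}(p_i(\A))_{wv}$. Every neighbour $w$ of $u$ satisfies $\partial(w,v)\in\{h-2,h-1,h\}$, all within the window, so this sum equals $n_{h-2}c_i(h-2)+n_{h-1}c_i(h-1)+n_h c_i(h)$, where $n_t$ counts the neighbours of $u$ at distance $t$ from $v$. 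Comparing the coefficient of each $i$ and again invoking the induction hypothesis, the case $i=h$ gives $n_h c_h(h)=\beta_{h-1}$ and the cases $i>h$ give $n_h c_i(h)=0$.

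The crux, and the step I expect to be the main obstacle, is to rule out $c_i(h)\neq0$ for $i>h$, for which it is essential that $\beta_{h-1}\neq0$. This follows from the orthogonal-polynomial identity $\beta_{h-1}\|p_{h-1}\|^2=\gamma_h\|p_h\|^2$ (obtained by pairing $\A p_{h-1}(\A)$ with $p_h(\A)$ in two ways), together with $\gamma_h\neq0$ from (\ref{omega_k}) and $\|p_h\|^2=p_h(\lambda_0)>0$. Since then $n_h c_h(h)=\beta_{h-1}\neq0$, both $c_h(h)\neq0$ and $n_h\neq0$, so $n_h c_i(h)=0$ forces $c_i(h)=0$ for every $i>h$. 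Combined with $c_i(h)=0$ for $i<h$ and $\sum_i c_i(h)=1$, this yields $c_h(h)=1$, completing the induction. Finally, $c_i(h)=\delta_{ih}$ for all $i\le d$ and $h\le m$ is precisely the assertion $p_i(\A)\circ\A_j=\delta_{ij}\A_i$, and Lemma~\ref{pwr=psr} supplies the $m$-spectrum-regular version at no extra cost.
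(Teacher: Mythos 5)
Your proposal is correct, but your forward implication takes a genuinely different route from the paper's. The paper proves this proposition as the special case $\ell=d$ of Proposition \ref{teor (l,m)-equivalencies}, by a short linear-algebra argument: under $m$-walk-regularity the evaluation map $\Phi(p)=(\varphi_0(p),\ldots,\varphi_m(p))$, defined by $p(\A)\circ\A_j=\varphi_j(p)\A_j$, is injective on $\R_m[x]$ by triangularity; the preimages $r_i$ of the standard basis vectors then satisfy $r_i(\A)=\sum_{j\le m} r_i(\A)\circ\A_j=\A_i$, whence $r_i=p_i$ by Lemma \ref{lema3.1}, and the entries of $p_i(\A)$ on distance classes $j\le m$ for $i>m$ are killed by the orthogonality $\langle p_i,p_j\rangle=0$. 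You instead run an induction on the distance $h$ driven by the three-term recurrence (\ref{recur-pol}), with the normalization $\sum_i c_i(h)=1$ supplied by the Hoffman polynomial (\ref{polHof}) and with spectrum-regularity handling the base case $h=0$; your identification of the crux is accurate, and your justification of $\beta_{h-1}\neq 0$ via $\beta_{h-1}\|p_{h-1}\|^2=\gamma_h\|p_h\|^2$, $\gamma_h\neq 0$ from (\ref{omega_k}), and $\|p_h\|^2=p_h(\lambda_0)>0$ is sound (and the needed existence of pairs at distance $h-1\le m\le D$ is automatic). The trade-offs: the paper's argument is shorter and carries over verbatim to $(\ell,m)$-walk-regularity, where only polynomials of degree at most $\ell<d$ are controlled, whereas your induction uses $c_i(h)$ for \emph{all} $i\le d$ (via the full sum $\sum_{i=0}^d p_i=H$) and hence genuinely needs $m$-walk-regularity for all lengths up to $d$; it would not specialize to that generalization without reworking. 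In return, your proof is more combinatorially transparent and yields byproducts along the way --- the equations $n_{h-1}=\alpha_{h-1}$, $n_{h-2}=\gamma_{h-1}$, $n_h=\beta_{h-1}$ show that the intersection numbers $a_{h-1}$, $c_{h-1}$, $b_{h-1}$ are well-defined --- much in the spirit of Lemma \ref{lem curt a llarg} and Proposition \ref{wr->pdr}.
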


This result implies the following connection with partial
distance-regularity.
\begin{propo}\label{wr->pdr} Let $m \le D$ and let $\G$ be
$m$-walk-regular. Then $\G$ is $m$-partially distance-regular
and $a_m$ (and hence $b_m$) is well-defined.
\end{propo}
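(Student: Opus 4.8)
The plan is to deduce $m$-partial distance-regularity directly from the window characterization in Proposition~\ref{m-cami-reg=m-espect-reg}, and then to read off the well-definedness of $a_m$ from the crossed local multiplicities. First I would fix an arbitrary $h\le m$ and prove that $\A_h=p_h(\A)$. Since $\J=\sum_{j=0}^D\A_j$ and Hadamard multiplication by $\J$ acts as the identity, one can write
$$
p_h(\A)=p_h(\A)\circ\J=\sum_{j=0}^D p_h(\A)\circ\A_j.
$$
For $j\le m$, Proposition~\ref{m-cami-reg=m-espect-reg} (applied with $i=h$) gives $p_h(\A)\circ\A_j=\delta_{hj}\A_j$, so these terms contribute exactly $\A_h$, using $h\le m$. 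It then remains only to kill the terms with $j>m$.

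For $j>m\ge h$ I would use a crude degree argument: since $\deg p_h=h$ and any two vertices $u,v$ with $\partial(u,v)=j$ satisfy $(\A^\ell)_{uv}=0$ for all $\ell<j$, in particular for $\ell\le h<j$, every entry $(p_h(\A))_{uv}$ with $\partial(u,v)=j$ vanishes; hence $p_h(\A)\circ\A_j=\O$. Combining the two ranges yields $p_h(\A)=\A_h$. As $\deg p_h=h$, this says precisely that $\G$ is $h$-punctually distance-regular, and since $h\le m$ was arbitrary, $\G$ is $m$-partially distance-regular.

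For the intersection number, note that $(\A\A_m)_{uv}$ counts $|\G_1(u)\cap\G_m(v)|$, so $a_m=p_{1,m}^m$ is well-defined if and only if this entry is constant over all pairs $u,v$ at distance $m$. Since $\A_m=p_m(\A)\in\calA$, the matrix $\A\A_m=\A\,p_m(\A)$ lies in the adjacency algebra, and so by (\ref{crossed-mul->num-walks}) its $uv$-entry equals $\sum_{i=0}^d \lambda_i\, p_m(\lambda_i)\,m_{uv}(\lambda_i)$. By Lemma~\ref{pwr=psr}, $m$-walk-regularity is equivalent to $m$-spectrum-regularity, so $m_{uv}(\lambda_i)=m_{mi}$ is the same for all $u,v$ at distance $m$; hence $(\A\A_m)_{uv}$ is constant and $a_m$ is well-defined. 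Finally, since $c_m$ is among the intersection numbers guaranteed by $m$-partial distance-regularity, the identity $b_m=\delta-a_m-c_m$ shows that $b_m$ is well-defined too.

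The only genuinely delicate point is the separate treatment of the two ranges of $j$ in the Hadamard decomposition: for $j\le m$ the vanishing comes from the algebraic window identity, whereas for $j>m$ it comes from the degree bound, and it is essential that $h\le m<j$ forces $h<j$ so that the ranges cover all of $\{0,\dots,D\}$. Everything else is bookkeeping.
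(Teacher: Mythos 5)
Your proof is correct, and it splits naturally into two halves that compare differently with the paper's. For the first claim ($m$-partial distance-regularity) you are doing essentially what the paper does: the paper simply asserts that Proposition~\ref{m-cami-reg=m-espect-reg} ``implies that $\A_i=p_i(\A)$ for $i\le m$,'' and your Hadamard decomposition $p_h(\A)=\sum_{j=0}^{D}p_h(\A)\circ\A_j$, with the window identity handling $j\le m$ and the degree bound $\deg p_h=h\le m<j$ killing the terms $j>m$, is exactly the suppressed reasoning. Where you genuinely diverge is the well-definedness of $a_m$. The paper stays inside the Hadamard-product calculus: it applies the three-term recurrence $xp_m=\beta_{m-1}p_{m-1}+\alpha_m p_m+\gamma_{m+1}p_{m+1}$ at $\A$, Hadamard-multiplies by $\A_m$, and uses the case $i=m+1$, $j=m$ of the window identity ($p_{m+1}(\A)\circ\A_m=\O$) to get $(\A\A_m)\circ\A_m=\alpha_m\A_m$; this not only proves well-definedness but identifies $a_m$ with the preintersection number $\alpha_m$, consistent with Proposition~\ref{propointersec}. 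You instead pass to the spectral side: $(\A p_m(\A))_{uv}=\sum_{i=0}^{d}\lambda_i p_m(\lambda_i)m_{uv}(\lambda_i)$ (the extension of (\ref{crossed-mul->num-walks}) from monomials to arbitrary polynomials is immediate by linearity, or directly from $p(\A)=\sum_i p(\lambda_i)\E_i$), and then invoke Lemma~\ref{pwr=psr} to make the crossed local multiplicities constant over pairs at distance $m$. This is equally valid and avoids both the recurrence coefficients and the $i=m+1$ instance of the window identity; the price is that you obtain constancy of $a_m$ without the explicit value $\alpha_m$. Your closing step $b_m=\delta-a_m-c_m$, using that $c_m$ is well-defined by $m$-partial distance-regularity, is precisely the content of the paper's ``hence.'' One cosmetic slip: with $i=h$, Proposition~\ref{m-cami-reg=m-espect-reg} gives $p_h(\A)\circ\A_j=\delta_{hj}\A_h$, not $\delta_{hj}\A_j$; the two agree since the matrix is nonzero only when $h=j$, so nothing breaks.
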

\begin{proof} Proposition \ref{m-cami-reg=m-espect-reg} implies that
$\A_i=p_i(\A)$ for $i \le m$, and hence that $\G$ is
$m$-partially distance-regular, and that $\p_{m+1}(\A) \circ
\A_m = \O$. It follows that $$(\A\A_m)\circ \A_m=(\A p_m(\A))
\circ \A_m
=(\beta_{m-1}\A_{m-1}+\alpha_m\A_m+\gamma_{m+1}p_{m+1}(\A))\circ
\A_m=\alpha_m\A_m,$$ which shows that $a_m=\alpha_m$ is well-defined,
and hence also $b_m$ is well-defined.
\end{proof}

It turns out though that much weaker conditions on the number
of walks are sufficient to show $m$-partial
distance-regularity.
\begin{propo}\label{weakwalks->mpdr} Let $m \le D$.
If the number of walks in $\G$ of length $\ell$ between vertices
$u$ and $v$ depends only on $\dist(u,v)=h$ for each $h<m$,
$\ell=h,h+1$, and $h=\ell=m$, then $\G$ is $m$-partially
distance-regular.
\end{propo}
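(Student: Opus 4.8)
The plan is to prove by induction on $h$ that $\A_h=p_h(\A)$ for every $h\le m$, which is precisely the statement that $\G$ is $m$-partially distance-regular. The cases $h=0$ and $h=1$ are immediate, since $\A_0=\I=p_0(\A)$ and $\A_1=\A=p_1(\A)$. For the inductive step I fix $h$ with $2\le h\le m$ and assume $\A_j=p_j(\A)$ for all $j<h$. I will not try to evaluate the entries of $p_h(\A)$ directly, because that would involve walk numbers of length up to $h$ that the hypothesis does not control; instead I only aim to show that $p_h(\A)\in{\cal D}$, i.e., that $(p_h(\A))_{uv}$ depends on $u,v$ solely through $\partial(u,v)$. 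Once this is established, the equality $\A_h=p_h(\A)$ follows immediately from Lemma \ref{lem D to A}, since $h\le m\le D\le d$.

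To prove $p_h(\A)\in{\cal D}$ I would invoke the three-term recurrence (\ref{recur-pol}) with $i=h-1$, which after applying the induction hypothesis reads
$$\gamma_h\,p_h(\A)=\A\A_{h-1}-\alpha_{h-1}\A_{h-1}-\beta_{h-2}\A_{h-2}.$$
As $\A_{h-1},\A_{h-2}\in{\cal D}$ and $\gamma_h\neq 0$ (by (\ref{omega_k}), since $\omega_h\ne 0$), it is enough to show that $\A\A_{h-1}\in{\cal D}$, that is, that $(\A\A_{h-1})_{uv}=|\Gamma_1(u)\cap\Gamma_{h-1}(v)|$ depends only on $k=\partial(u,v)$. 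By the triangle inequality this count vanishes unless $k\in\{h-2,h-1,h\}$, so only three distance classes require attention. For $k=h$ and $k=h-1$ I would compute $|\Gamma_1(u)\cap\Gamma_{h-1}(v)|=(\A p_{h-1}(\A))_{uv}$ term by term: for $\partial(u,v)=h$ only the top-degree contribution survives, giving $\omega_{h-1}a_{uv}^{(h)}$, while for $\partial(u,v)=h-1$ the entry is a fixed linear combination of $a_{uv}^{(h-1)}$ and $a_{uv}^{(h)}$. All of these walk numbers are constant on the respective class by hypothesis (for $k=h$ through $h<m$ or the clause $h=\ell=m$; for $k=h-1$ through $h-1<m$ with $\ell=h-1,h$).

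The delicate case, which I expect to be the main obstacle, is $k=h-2$. Here $|\Gamma_1(u)\cap\Gamma_{h-1}(v)|$ is the number $b_{h-2}(u,v)$ of neighbours of $u$ lying one step farther from $v$, and computing it as $(\A p_{h-1}(\A))_{uv}$ would bring in the walk number $a_{uv}^{(h)}$ for vertices at distance $h-2$ --- a length outside the range $\{h-2,h-1\}$ permitted by the hypothesis. The remedy is to avoid $b_{h-2}$ altogether and use the identity $c_{h-2}(u,v)+a_{h-2}(u,v)+b_{h-2}(u,v)=\delta$, where $c_{h-2}(u,v)=|\Gamma_1(u)\cap\Gamma_{h-3}(v)|$ and $a_{h-2}(u,v)=|\Gamma_1(u)\cap\Gamma_{h-2}(v)|$. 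Expanding these with the induction hypothesis gives $c_{h-2}(u,v)=\omega_{h-3}a_{uv}^{(h-2)}$ and $a_{h-2}(u,v)$ as a fixed linear combination of $a_{uv}^{(h-2)}$ and $a_{uv}^{(h-1)}$; now every walk length occurring lies in $\{h-2,h-1\}$ and is constant on the class $\partial(u,v)=h-2$, because $h-2<m$. Hence $b_{h-2}(u,v)=\delta-c_{h-2}(u,v)-a_{h-2}(u,v)$ is constant as well.

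Putting the three cases together shows $\A\A_{h-1}\in{\cal D}$, whence $p_h(\A)\in{\cal D}$, and Lemma \ref{lem D to A} yields $\A_h=p_h(\A)$, closing the induction. The only boundary subcases to check by hand are the small ones, e.g. for $h=2$ the class $k=h-2=0$ forces $u=v$ and $|\Gamma_1(u)\cap\Gamma_1(u)|=\delta$ is constant by regularity. The essential point of the argument is the bookkeeping of which walk numbers fall inside the hypothesized range, and the trick of recovering the ``outgoing'' parameter $b$ from the sum rule $c+a+b=\delta$ is exactly what lets the sparse walk hypothesis suffice.
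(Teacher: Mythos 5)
Your proof is correct, but it takes a genuinely different route from the paper's. The paper proves the \emph{combinatorial} form of $m$-partial distance-regularity directly, in two lines: a walk of length $h$ between vertices at distance $h$ must factor through a neighbour of $u$ on a geodesic, giving $a_{uv}^{(h)}=|\G_1(u)\cap\G_{h-1}(v)|\,a_{h-1}^{(h-1)}$, so $c_h$ is well-defined for $h\le m$ (dividing by $a_{h-1}^{(h-1)}\neq 0$); then $a_{uv}^{(h+1)}=|\G_1(u)\cap\G_h(v)|\,a_h^{(h)}+c_h\,a_{h-1}^{(h)}$ shows $a_h$ (and by regularity $b_h$) is well-defined for $h<m$. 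That argument rests on the equivalence, stated in Section \ref{sec:3.2}, between the well-definedness of the intersection numbers up to $c_m$ and the algebraic definition of $m$-partial distance-regularity. You instead establish the algebraic form $\A_h=p_h(\A)$ directly, by induction via the three-term recurrence (\ref{recur-pol}) and Lemma \ref{lem D to A}, converting the walk hypotheses into constancy of the entries of $\A\A_{h-1}$ on the three relevant distance classes; your sum-rule trick $c+a+b=\delta$ for the class $\partial(u,v)=h-2$ plays the same role as the paper's use of regularity to get $b_h$, and your bookkeeping of which walk lengths are hypothesized constant at which distances matches the paper's exactly. What each approach buys: the paper's proof is shorter, elementary, and produces explicit formulas such as $c_h=a_h^{(h)}/a_{h-1}^{(h-1)}$, with the nonvanishing input being $a_{h-1}^{(h-1)}>0$; yours is heavier (it invokes the predistance-polynomial machinery, $\gamma_h\neq 0$, and the orthogonality argument packaged in Lemma \ref{lem D to A}) but is self-contained with respect to the algebraic definition, since it never appeals to the asserted equivalence of the two definitions of $m$-partial distance-regularity.
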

\begin{proof} If $\dist(u,v)=h \le m$, then
$a^{(h)}_h=|\G_1(u)\cap\G_{h-1}(v)|a^{(h-1)}_{h-1}$ assures
that $c_h$ is well-defined. If $\dist(u,v)=h < m$, then
similarly $a^{(h+1)}_h=|\G_1(u)\cap\G_{h}(v)|a^{(h)}_{h}+c_h
a^{(h)}_{h-1}$ assures that $a_h$ is well-defined.
\end{proof}

In the next section, we shall further work out the difference
between $m$-partial distance-regularity and
$m$-walk-regularity. The following characterization by
Rowlinson \cite{r97} (see also Fiol~\cite{fiol02}) follows
immediately from Proposition \ref{m-cami-reg=m-espect-reg}.
\begin{propo} \cite{r97}
A graph is $D$-walk-regular if and only if it is
distance-regular.
\end{propo}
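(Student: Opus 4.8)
The plan is to obtain both implications directly from Proposition \ref{m-cami-reg=m-espect-reg} taken with $m=D$, which states that $\G$ is $D$-walk-regular if and only if $p_i(\A)\circ\A_j=\delta_{ij}\A_i$ for all $i=0,1,\dots,d$ and $j=0,1,\dots,D$. This is the route the text points to; alternatively the ``only if'' implication can simply be read off from Proposition \ref{wr->pdr} with $m=D$, since a $D$-partially distance-regular graph is distance-regular.

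First I would treat the ``only if'' direction ($D$-walk-regular $\Rightarrow$ distance-regular). Assuming $D$-walk-regularity, I fix an index $i$ and sum the identities $p_i(\A)\circ\A_j=\delta_{ij}\A_i$ over $j=0,\dots,D$. Since the distance matrices partition the all-ones matrix, $\sum_{j=0}^D\A_j=\J$, and $\M\circ\J=\M$ for every matrix $\M$, the left-hand side collapses to $p_i(\A)$. The right-hand side equals $\A_i$ when $i\le D$ and equals $\O$ when $i>D$. Hence $p_i(\A)=\A_i$ for every $i\le D$, which says precisely that ${\cal D}\subseteq{\cal A}$ with each predistance polynomial of the correct degree, i.e., $\G$ is $D$-partially distance-regular, and therefore distance-regular.

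The one point that deserves care is ruling out $D<d$. For indices $i>D$ the summed identity reads $p_i(\A)=\O$; but $p_i$ is a nonzero polynomial of degree $i\le d$, whereas the minimal polynomial of $\A$ has degree $d+1$, so $p_i(\A)=\O$ is impossible. Thus no such index exists, forcing $d=D$, and the equalities $\A_i=p_i(\A)$ for all $i$ then give distance-regularity outright.

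For the converse (distance-regular $\Rightarrow$ $D$-walk-regular) I would just verify the condition of Proposition \ref{m-cami-reg=m-espect-reg}. If $\G$ is distance-regular then $d=D$ and $\A_i=p_i(\A)$ for all $i$, so $p_i(\A)\circ\A_j=\A_i\circ\A_j$; as the $\A_i$ are $(0,1)$-matrices with pairwise disjoint supports, this Hadamard product is $\A_i$ when $i=j$ and $\O$ otherwise, that is, $\delta_{ij}\A_i$. There is no substantial obstacle here: the statement is a genuine corollary of Proposition \ref{m-cami-reg=m-espect-reg}, the only subtlety being the verification that $D=d$ in the forward direction, which the degree count above settles.
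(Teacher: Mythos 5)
Your proof is correct and follows exactly the route the paper indicates: the paper states that this result ``follows immediately from Proposition \ref{m-cami-reg=m-espect-reg}'', and your argument is precisely that derivation spelled out (summing the Hadamard identities over $j$, using the degree count on the minimal polynomial to force $D=d$, and checking the Hadamard condition for the converse). The details you supply, including the alternative via Proposition \ref{wr->pdr} and Proposition \ref{D/d-1 suffices}, match what the authors leave implicit.
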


In the previous section we showed that any graph $\G$ in an
association scheme is $1$-walk-regular. In case the
distance-matrices $\A_h$ of $\G$ are in the association scheme
for all $h \le m$, then the graph is clearly $m$-walk-regular
by Proposition \ref{theo-squemes}. Such graphs are examples of
so-called distance($m$)-regular graphs, as introduced by Powers
\cite{p91}. A graph is called {\em distance($m$)-regular} if
for every vertex $u$ there is an equitable partition $\{\{u\},
\Gamma_1(u),\dots,\Gamma_m(u),V_{m+1}(u),\dots,V_{e}(u)\}$ of
the vertices, with quotient matrix being the same for every $u$
(we refer the reader who is unfamiliar with equitable
partitions to \cite[p. 79]{g93}). We observe that this is
equivalent to the existence of $(0,1)$-matrices
$\B_{m+1},\dots, \B_e$ that add up to $\A_{m+1}+\cdots
+\A_{D}$, such that the linear span of the set
$\{\A_0,\A_1,\dots,\A_m,\B_{m+1},\dots, \B_e\}$ is closed under
left multiplication by $\A$. Consequently, a
distance($m$)-regular graph is $m$-walk-regular (the same
argument as in the proof of Proposition \ref{theo-squemes}
applies). We now present some interesting examples of
distance($m$)-regular graphs (mostly coming from association
schemes).

The bipartite incidence graph of a square divisible design with
the dual property (i.e., such that the dual design is also
divisible with the same parameters as the design itself) is a
distance(2)-regular graph with $D=4$ (and in general $d=5$).
This follows for example from the distance distribution diagram
(see \cite[p. 24]{bcn}); hence these graphs are
$2$-walk-regular.

The distance-4 graph of the distance-regular Livingstone graph
is a distance(2)-regular graph with $D=3$ (and $d=4$); again,
see the distribution diagram \cite[p. 407]{bcn}.

The graph defined on the 55 flags of the symmetric
$2$-$(11,5,2)$ design, with flags $(p,b)$ and $(p',b')$ being
adjacent if also $(p,b')$ and $(p',b)$ are flags is
distance(3)-regular with $D=4$ and $d=5$; see the distribution
diagram in Figure \ref{55flags}.

\begin{figure}[h!]
\begin{center}
\resizebox{80mm}{!}{\includegraphics{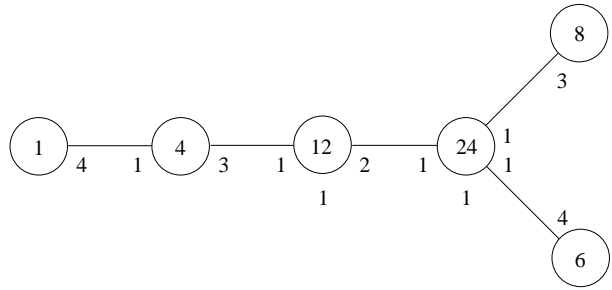}} \caption{Distance distribution diagram
of the flag graph} \label{55flags}
\end{center}
\end{figure}

The above examples show that there are $(D-1)$-walk-regular
graphs with diameter $D$ that are not distance-regular, for
small $D$. For larger $D$, we do not have such examples
however, so the question arises if these exist at all.
\begin{problem}\label{problem2} (a) Determine the smallest $m=m_{wr,D}(D)$ such that
every $m$-walk-regular graph with diameter $D$ is distance-regular.

(b) Determine the smallest $m=m_{wr,d}(d)$ such that every
$m$-walk-regular graph with $d+1$ distinct eigenvalues is
distance-regular.
\end{problem}

Note that a $(d-1)$-walk-regular graph (with $d-1 \le D$) is
distance-regular by Propositions \ref{wr->pdr} and \ref{D/d-1
suffices}.

Another interesting example related to this problem is the
graph F234B from the Foster Census \cite{rcmd09}. This graph
has $D=8$, $d=11$, it is 5-arc-transitive, and hence
5-walk-regular. The vertices correspond to the 234 triangles in
$PG(2,3)$ with two vertices being adjacent whenever the
corresponding triangles have one common point and their
remaining four points are distinct and collinear \cite[p.
125]{biggs}. This and the above examples suggest that
$m_{wr,D}(D) > \frac D2+1$.

\subsection{$(\ell,m)$-Walk-regular graphs}\label{sec:lmwr}

In order to understand the difference between $m$-partial
distance-regularity and $m$-walk-regularity, the following
generalization of the latter is useful. As before, let $\G$ be a
graph with diameter $D$ and $d+1$ eigenvalues. Given two
integers $\ell\le d$ and $m\le D$ satisfying $\ell\ge m$, we
say that is $\G$ is {\em $\ell$-partially $m$-walk-regular}, or
\emph{$(\ell,m)$-walk-regular} for short, if the number of
walks of length $\ell'\le \ell$ between any pair of vertices
$u,v$ at distance $m'\le m$ does not depend on such vertices
but depends only on $\ell'$ and $m'$. The concept of
$(\ell,m)$-walk-regularity was introduced in \cite{DaFiGa09b},
and generalizes some of the concepts from the previous
sections. In fact, the following equivalences follow
immediately:
\begin{itemize}
\item $~(d,0)$-walk-regular graph \quad $\equiv$ \quad
    walk-regular graph
\item $(d,m)$-walk-regular graph \quad $\equiv$ \quad
    $m$-walk-regular graph
\item $(d,D)$-walk-regular graph \quad $\equiv$ \quad
    distance-regular graph
\end{itemize}

We also note that $(\ell,0)$-walk-regular graphs were
introduced in \cite{fg1} under the name of $\ell$-partially
walk-regular graphs, and they were also studied by Huang et al.
\cite{hhlw07}. More relations can be derived from the following
generalization of Proposition \ref{m-cami-reg=m-espect-reg}.
Here we will give a new (and shorter) proof.
\begin{propo} \cite{DaFiGa09b} \label{teor (l,m)-equivalencies}
Let $d \ge \ell \ge m \le D$. Then $\G$ is
$(\ell,m)$-walk-regular if and only if $p_i(\A)\circ\A_j=
\delta_{ij}\A_i$ for $i=0,1,\dots,\ell$ and $j=0,1,\dots,m$.
\end{propo}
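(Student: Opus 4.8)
The plan is to observe that the Hadamard identity $p_i(\A)\circ\A_j=\delta_{ij}\A_i$ says exactly that $(p_i(\A))_{uv}=\delta_{ij}$ for every pair $u,v$ at distance $\partial(u,v)=j\le m$ (with $i\le\ell$), so both implications amount to comparing the entries of $p_i(\A)$ on the distance classes with the walk numbers $(\A^t)_{uv}$. I would prove the two directions separately, the forward one resting on the fact that $(\ell,m)$-walk-regularity already forces $m$-partial distance-regularity.

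For ($\Rightarrow$), I would first use the hypothesis $\ell\ge m$: under $(\ell,m)$-walk-regularity the number of walks of length $t\le\ell$ between vertices at distance $h\le m$ depends only on $t$ and $h$, which in particular furnishes all the walk counts demanded by Proposition \ref{weakwalks->mpdr}. Hence $\G$ is $m$-partially distance-regular, i.e.\ $\A_j=p_j(\A)$ for $0\le j\le m$, and this is the engine of the argument. Fixing $i\le\ell$, $j\le m$, and $u,v$ with $\partial(u,v)=j$, the case $i\le m$ is immediate since $(p_i(\A))_{uv}=(\A_i)_{uv}=\delta_{ij}$. The case $m<i\le\ell$ is where the real difficulty lies and is the step I expect to be the main obstacle, because $\A_i$ is then not a polynomial in $\A$ and the entry cannot be read off directly. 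Here I would note that $\deg p_i=i\le\ell$, so $(p_i(\A))_{uv}$ is a fixed combination of walk numbers $(\A^t)_{uv}$ with $t\le\ell$, each constant on the distance-$j$ class; writing $q_{ij}$ for this common value, the Hadamard form of the scalar product gives $q_{ij}\|\A_j\|^2=\langle p_i(\A),\A_j\rangle$. Replacing $\A_j$ by $p_j(\A)$ converts the right-hand side into $\langle p_i,p_j\rangle$, which vanishes because $i>m\ge j$ forces $i\ne j$; since $\|\A_j\|^2=p_j(\lambda_0)>0$ we conclude $q_{ij}=0=\delta_{ij}$, completing this direction.

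For ($\Leftarrow$), no distance-regularity input is needed. Assuming $(p_i(\A))_{uv}=\delta_{ij}$ whenever $\partial(u,v)=j\le m$ and $i\le\ell$, I would expand each power $x^{t}$ with $t\le\ell$ in the basis $\{p_0,\dots,p_\ell\}$ of $\R_\ell[x]$, say $x^{t}=\sum_{i=0}^{t}c_{ti}p_i$, so that $(\A^{t})_{uv}=\sum_{i}c_{ti}(p_i(\A))_{uv}=\sum_{i}c_{ti}\delta_{ij}$ for $\partial(u,v)=j\le m$. This last sum depends only on $t$ and $j$, so the walk numbers have the required invariance and $\G$ is $(\ell,m)$-walk-regular.
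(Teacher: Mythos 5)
Your proof is correct, and its ($\Leftarrow$) half coincides with the paper's: both expand $x^t$, $t\le\ell$, in the basis $\{p_0,\dots,p_\ell\}$ of $\R_{\ell}[x]$ and read off the entries on each distance class. The genuine difference lies in the ($\Rightarrow$) half, namely in how one first obtains $p_i(\A)\circ\A_j=\delta_{ij}\A_i$ for $i,j\le m$. The paper does this with a self-contained linear-algebra device: the map $\Phi:\R_{\ell}[x]\to\R^{m+1}$ defined by $p(\A)\circ\A_j=\varphi_j(p)\A_j$ (well defined exactly because of $(\ell,m)$-walk-regularity), whose restriction to $\R_m[x]$ is injective by triangularity; the preimages $r_i$ of the standard basis vectors then satisfy $r_i(\A)=\sum_{j=0}^{m}r_i(\A)\circ\A_j=\A_i$ and are identified with the $p_i$ through Lemma~\ref{lema3.1}. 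You instead quote Proposition~\ref{weakwalks->mpdr}, whose combinatorial induction on intersection numbers gives $m$-partial distance-regularity, hence $\A_j=p_j(\A)$ for $j\le m$; this is legitimate and non-circular, since that proposition appears earlier in the paper and its proof is independent of the present one. From there the two arguments merge: for $m<i\le\ell$ your constant $q_{ij}$ is the paper's $\varphi_j(p_i)$, and both vanish by the same computation $q_{ij}\,p_j(\lambda_0)=\langle p_i,p_j\rangle=0$. As for what each approach buys: yours is more modular and makes transparent that the hard implication is ``walk-regularity forces partial distance-regularity'' plus orthogonality of the predistance polynomials, and it only needs the few walk counts demanded by Proposition~\ref{weakwalks->mpdr}; the paper's is self-contained, constructing the required polynomials directly inside the proof, which is what allows it to stand alone as the advertised shorter replacement for the original argument in the cited reference.
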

\begin{proof}
Assume the latter. Let $x^h=\sum_{i=0}^h\eta_{ih}p_i$ for
$h\leq \ell$. Then for each pair of vertices $u,v$ at distance
$j\leq m$, and $h\leq \ell$, we have:
$$(\matrixA^h)_{uv}  = (\matrixA^h\circ\matrixA_j)_{uv}=
\sum_{i=0}^h\eta_{ih}\left(p_i(\matrixA)
\circ\matrixA_j\right)_{uv}=\eta_{jh}.$$ Consequently, $\G$ is
$(\ell,m)$-walk-regular. Conversely, consider the mapping
$\Phi:\R_{\ell}[x]\to\R^{m+1}$ defined by
$\Phi(p)=(\varphi_0(p),\ldots,\varphi_m(p))$, with
$p(\matrixA)\circ \A_j=\varphi_j(p)\matrixA_j$. This mapping is
linear and
$\Phi(x^j)=(\varphi_0(x^j),\ldots,\varphi_j(x^j),0,\ldots,0)$
with $\varphi_j(x^j)\neq 0$, for $j=0,1,\ldots,m$. Therefore
the restriction $\widetilde{\Phi}$ of $\Phi$ to $\R_m[x]$, is
one-to-one. Now, let
$r_i=\widetilde{\Phi}^{-1}(0,\ldots,1,\ldots,0)$, with the $1$
in the $i$-th position, for $i\le m$. In other words,
$r_i(\A)\circ \A_j= \delta_{ij} \A_i$ for $i,j \le m$. Each
polynomial $r_i$ satisfies $r_i(\matrixA)=\sum_{j=0}^m
r_i(\matrixA)\circ \A_j=\matrixA_i$, and therefore $r_i=p_i$ by
Lemma \ref{lema3.1}. Thus, $p_i(\A)\circ \A_j= \delta_{ij}
\A_i$ for $i,j \le m$.

Now let $m+1\leq i\leq\ell$ and $j\leq m$. Then
$p_i(\matrixA)\circ p_j(\matrixA)= p_i(\matrixA)\circ
\matrixA_j=\varphi_j(p_i)\matrixA_j$. From this equation, we
find that $\varphi_j(p_i)p_j(\lambda_0) =\varphi_j(p_i)\frac1n
\som(\A_j)=\frac1n \som(p_i(\A) \circ p_j(\A))=\langle p_i,p_j
\rangle=0.$ Thus, $\varphi_j(p_i)=0$ and $p_i(\matrixA)\circ
\matrixA_j=\matrix0$, which completes the proof.
\end{proof}

The following equivalences now follow; see also the proof of
Proposition \ref{wr->pdr}.
\begin{itemize}
\item $~~~~~(m,m)$-walk-regular graph \quad $\equiv$ \quad
    $m$-partially distance-regular graph
\item $(m+1,m)$-walk-regular graph \quad $\equiv$ \quad
    $m$-partially distance-regular graph \\ \mbox{\hskip
    6.5cm} with $a_m$ (and hence $b_m$) well-defined
\end{itemize}

We have seen in Proposition \ref{weakwalks->mpdr} though that
weaker conditions on the number of walks are sufficient to show
$m$-partial distance-regularity.

An example illustrating the above is the unique $(6,5)$-cage on 40 vertices obtained from the
Hoffman-Singleton graph by removing an induced Petersen graph.
This generalized Moore graph has $d=4$, $D=3$, and girth 5.
From its distance distribution diagram (see \cite[Fig. 9.1]{kmm}), it follows that
it is 2-partially distance-regular, but not $(3,2)$-walk-regular.

The next proposition follows from the characterization in
Proposition \ref{teor (l,m)-equivalencies}. It clarifies the role of
the preintersection numbers given by the expressions in
$(\ref{preintersec})$.
\begin{propo} \cite{DaFiGa09b}
\label{propointersec} Let $d \ge \ell \ge m \le D$, let $\G$ be
$(\ell,m)$-walk-regular, and let $i,j,k\le m$. If $i+j\le
\ell$, then the preintersection  number  $\xi_{ij}^k$ equals
the well-defined intersection number $p_{ij}^k$. If $i+j\ge
\ell+1$, then the preintersection number $\xi_{ij}^k$ equals
the average $\overline{p}_{ij}^k$ of the values
$p_{ij}^k(u,v)=|\G_i(u)\cap\G_j(v)|$ over all vertices $u,v$ at
distance $k$.
\end{propo}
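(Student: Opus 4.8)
The plan is to derive everything from the entrywise characterization of $(\ell,m)$-walk-regularity given in Proposition \ref{teor (l,m)-equivalencies}. First I would record two consequences of that characterization. Since $i,j,k\le m$, the proof of Proposition \ref{teor (l,m)-equivalencies} shows that $\A_h=p_h(\A)$ for every $h\le m$; in particular $\A_i=p_i(\A)$, $\A_j=p_j(\A)$, and $\A_k=p_k(\A)$. Hence $\A_i\A_j=(p_ip_j)(\A)$, and for any $u,v$ with $\partial(u,v)=k$ we have $(\A_i\A_j)_{uv}=|\G_i(u)\cap\G_j(v)|=p_{ij}^k(u,v)$. Second, taking the indices $h\le\ell$ and $k\le m$ in Proposition \ref{teor (l,m)-equivalencies} gives $p_h(\A)\circ\A_k=\delta_{hk}\A_k$, which reads entrywise as $(p_h(\A))_{uv}=\delta_{hk}$ for every pair $u,v$ at distance $k$. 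This Kronecker-delta identity for the off-diagonal entries is the engine of the whole argument.

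Next I would prove the identity $\xi_{ij}^k=\overline{p}_{ij}^k$ with no hypothesis on $i+j$; this already settles the case $i+j\ge\ell+1$. Using the Hadamard form of the scalar product together with $p_k(\A)=\A_k$,
$$\langle p_ip_j,p_k\rangle=\frac1n\som\big((p_i(\A)p_j(\A))\circ\A_k\big)=\frac1n\sum_{\partial(u,v)=k}(\A_i\A_j)_{uv}=\frac1n\sum_{\partial(u,v)=k}p_{ij}^k(u,v),$$
while $\|p_k\|^2=p_k(\lambda_0)=\|\A_k\|^2=\frac1n\sum_{\partial(u,v)=k}1$ counts the ordered pairs at distance $k$. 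Dividing and comparing with (\ref{preintersec}) gives $\xi_{ij}^k=\langle p_ip_j,p_k\rangle/\|p_k\|^2=\overline{p}_{ij}^k$.

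For the case $i+j\le\ell$ I would upgrade this average to a pointwise equality. Expand $p_ip_j=\sum_{h=0}^{i+j}\xi_{ij}^h p_h$, so that $\A_i\A_j=\sum_{h=0}^{i+j}\xi_{ij}^h p_h(\A)$. Fix $u,v$ with $\partial(u,v)=k$. Every index $h$ in this sum satisfies $h\le i+j\le\ell$, so the entry identity from the first paragraph applies and $(p_h(\A))_{uv}=\delta_{hk}$. Hence $p_{ij}^k(u,v)=(\A_i\A_j)_{uv}=\sum_{h}\xi_{ij}^h\delta_{hk}=\xi_{ij}^k$, a value independent of the chosen pair (and equal to $0=\xi_{ij}^k$ in the degenerate case $k>i+j$). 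Therefore $p_{ij}^k$ is well-defined and equals $\xi_{ij}^k$, consistent with $\overline{p}_{ij}^k=p_{ij}^k$.

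The one point that requires care — and that explains the dichotomy in the statement — is the degree bookkeeping in the last step: the entry identity $(p_h(\A))_{uv}=\delta_{hk}$ is guaranteed only for $h\le\ell$, so it applies to every term of $p_ip_j$ exactly when $\deg(p_ip_j)=i+j\le\ell$. As soon as $i+j\ge\ell+1$, the expansion of $p_ip_j$ involves polynomials $p_h$ with $h>\ell$, whose entries at distance $k$ are not controlled by $(\ell,m)$-walk-regularity; the individual numbers $p_{ij}^k(u,v)$ may then vary with $u,v$, and only their average $\overline{p}_{ij}^k$ survives, which is exactly what the Hadamard/trace computation of the second paragraph delivers.
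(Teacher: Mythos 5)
Your proof is correct, and it follows exactly the route the paper itself indicates: the paper gives no detailed argument for Proposition \ref{propointersec} (deferring to \cite{DaFiGa09b} and remarking only that it ``follows from the characterization in Proposition \ref{teor (l,m)-equivalencies}''), and your two-step argument --- the trace/Hadamard computation giving $\xi_{ij}^k=\overline{p}_{ij}^k$ unconditionally, then the pointwise upgrade via the expansion $p_ip_j=\sum_{h\le i+j}\xi_{ij}^h p_h$ and the entrywise identity $(p_h(\A))_{uv}=\delta_{hk}$ valid for $h\le\ell$ --- is a sound and complete realization of that plan.
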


The graph F084A from the Foster Census \cite{rcmd09} has $D=7$
and $d=10$. It is $2$-walk-regular, $3$-partially
distance-regular, and all intersection numbers $c_i,
i=1,2,\dots,7$ are well-defined. This implies that the number
of walks of length $\ell$ between vertices at distance $\ell$
depends only on $\ell$. Still, this graph is not even
$(4,3)$-walk-regular, because $a_3$ is not well-defined.

We will now obtain relations between various kinds of partial
walk-regularity.
\begin{propo}
\label{(l,m)->(l+1,m-1)} Let $d-1 \ge \ell \ge m \ge 1$, $m \le
D$, and let $\G$ be $(\ell,m)$-walk-regular. Then $\G$ is
$(\ell+1,m-1)$-walk-regular.
\end{propo}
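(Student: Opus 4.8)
The plan is to verify the defining walk-regularity conditions directly. Being $(\ell+1,m-1)$-walk-regular means that $a_{uv}^{(\ell')}$ depends only on $\ell'$ and $m'=\dist(u,v)$ whenever $\ell'\le \ell+1$ and $m'\le m-1$. For $\ell'\le \ell$ this is already guaranteed by $(\ell,m)$-walk-regularity, since $m'\le m-1\le m$. Hence the only genuinely new case is walks of length $\ell+1$ between vertices at distance $m'\le m-1$, and the whole proof reduces to showing that $a_{uv}^{(\ell+1)}$ is a constant for each such $m'$.

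First I would expand one step of the walk: for $u,v$ with $\dist(u,v)=m'\le m-1$,
\[
a_{uv}^{(\ell+1)}=(\A^{\ell+1})_{uv}=\sum_{w\in\Gamma_1(u)}(\A^{\ell})_{wv}=\sum_{w\in\Gamma_1(u)}a_{wv}^{(\ell)}.
\]
Every neighbour $w$ of $u$ satisfies $\dist(w,v)\le m'+1\le m$, so by $(\ell,m)$-walk-regularity each term $a_{wv}^{(\ell)}$ depends only on $\dist(w,v)\in\{m'-1,m',m'+1\}$; write it as $a_{\dist(w,v)}^{(\ell)}$. Grouping the neighbours of $u$ according to their distance to $v$ gives
\[
a_{uv}^{(\ell+1)}=c_{m'}(u,v)\,a_{m'-1}^{(\ell)}+a_{m'}(u,v)\,a_{m'}^{(\ell)}+b_{m'}(u,v)\,a_{m'+1}^{(\ell)},
\]
where $c_{m'}(u,v),a_{m'}(u,v),b_{m'}(u,v)$ are the local intersection numbers counting neighbours of $u$ at distance $m'-1,m',m'+1$ from $v$. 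Thus it suffices to show that these three numbers do not depend on the chosen pair $u,v$ at distance $m'$, for every $m'\le m-1$.

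This is where the hypotheses pay off. Since $\ell\ge m$, restricting $(\ell,m)$-walk-regularity to lengths $\ell'\le m$ shows that $\G$ is $(m,m)$-walk-regular, which is exactly $m$-partial distance-regularity; consequently the intersection numbers $c_i,a_i$ for $i\le m-1$ (and $c_m$) are well-defined. This immediately settles $c_{m'}$ and $a_{m'}$ for all $m'\le m-1$. The only delicate point is $b_{m-1}$, which is not literally among the parameters guaranteed by $m$-partial distance-regularity; I would obtain it from the degree identity $c_{m'}(u,v)+a_{m'}(u,v)+b_{m'}(u,v)=\delta$, valid for every pair at distance $m'$, so that $b_{m'}=\delta-c_{m'}-a_{m'}$ is constant as soon as $c_{m'}$ and $a_{m'}$ are. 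Substituting the now-constant intersection numbers into the displayed expression shows that $a_{uv}^{(\ell+1)}$ depends only on $m'$, completing the proof. (The boundary case $m'=0$, i.e.\ $u=v$, reduces to $a_{uu}^{(\ell+1)}=\delta\,a_1^{(\ell)}$, recovering part of Proposition~\ref{1-puncw->walk}.) The main obstacle is precisely handling $b_{m-1}$; alternatively one could argue through the Hadamard characterization of Proposition~\ref{teor (l,m)-equivalencies}, reducing everything to $p_{\ell+1}(\A)\circ\A_j=\O$ for $j\le m-1$ via the three-term recurrence~(\ref{recur-pol}), but the same $b_{m-1}$ issue reappears there.
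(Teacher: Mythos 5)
Your proof is correct and coincides with the paper's own argument: the paper's proof concludes with exactly this combinatorial recurrence, $a_{uv}^{(\ell+1)}=c_j a_{j-1}^{(\ell)}+a_j a_j^{(\ell)}+b_j a_{j+1}^{(\ell)}$ for $j\le m-1$, offered as an alternative to its primary computation with $p_{\ell+1}(\A)\circ\A_j$. Your version is in fact slightly more complete, since you justify that $c_j,a_j,b_j$ are well-defined (via the equivalence of $(m,m)$-walk-regularity with $m$-partial distance-regularity, plus the degree identity $c_j+a_j+b_j=\delta$ to handle $b_{m-1}$), a point the paper leaves implicit.
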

\begin{proof} Let $u,v$ be two vertices of $\G$ at distance $j\leq m-1$,
with $j<\ell-1$ (if $m=\ell$). From
$\gamma_{\ell+1}p_{\ell+1}=xp_{\ell}-\beta_{\ell-1}p_{\ell-1}-
\alpha_{\ell}p_{\ell}$ we have:
\begin{eqnarray*}
\gamma_{\ell+1}(p_{\ell+1}(\A)\circ\A_j)_{uv} & = & (\A
p_{\ell}(\A)\circ\A_j)_{uv}=
(\A p_{\ell}(\A))_{uv}=\\
& & \sum_w\A_{uw}(p_{\ell}(\A))_{wv}=
\sum_{\partial(w,u)=1}(p_{\ell}(\A))_{wv}=0\,,
\end{eqnarray*}
since $\partial(w,v)\le j+1\leq m$, $\partial(w,v)< \ell$ if
$m=\ell$, and $p_{\ell}(\A)\circ\A_i=\matrix0$ for $i\leq
m<\ell$. Moreover, if $m=\ell$ and $j=\ell-1$ then $\G$ is
$\ell$-partially distance-regular. Thus, we get
$$
\gamma_{\ell+1}(p_{\ell+1}(\A)\circ\A_{\ell-1})_{uv} =(\A
\A_{\ell})_{uv}-b_{\ell-1}(\A_{\ell-1})_{uv}=0,
$$
since $p_i(\A)=\A_i$, $0\le i\le \ell$, and
$b_{\ell-1}=\beta_{\ell-1}=(\A \A_{\ell})_{uv}$ is well-defined.
Therefore, $p_{\ell+1}(\A)\circ\A_j=\matrix0$ for every $j\leq m-1$,
and Proposition \ref{teor (l,m)-equivalencies} yields the result.

Alternatively, notice that, if $\G$ is $(\ell,m)$-walk-regular,
then the number of walks of length $\ell+1$ between vertices
$u,v$ at distance $j<m$ equals
$$
a_{uv}^{(\ell+1)}=c_j a_{j-1}^{(\ell)}+a_j a_j^{(\ell)}+b_j
a_{j+1}^{(\ell)}
$$ and hence is a constant $a_j^{(\ell+1)}$.
\end{proof}

As a direct consequence of this last result, we have that
$(\ell,m)$-walk-regularity implies $(\ell+r,m-r)$-walk-regularity
for every integer $r \le d-\ell$ and $1\le r\le m$. In particular,
every $(\ell,m)$-walk-regular graph with $\ell\ge d-m$ is also
walk-regular. Also the following connections between partial
distance-regularity and $m$-walk-regularity follow.

\begin{propo}\label{mpdr->2m+1-d} Let $m \le D$ and let $\G$ be $m$-partially
distance-regular. If $m \ge \frac{d-1}2$, then $\G$ is
$(2m+1-d)$-walk-regular. If $m \ge \frac{d-2}2$ and $a_m$ is
well-defined, then $\G$ is $(2m+2-d)$-walk-regular. If $m \ge
\frac{d-3}2$ and $\G$ is bipartite, then $\G$ is
$(2m+3-d)$-walk-regular.
\end{propo}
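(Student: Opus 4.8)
The plan is to reduce all three assertions to a single tool, the ``shift'' Proposition~\ref{(l,m)->(l+1,m-1)}, applied as a telescoping chain, and then to finish each time with the ``checking'' Lemma~\ref{cheking m-wr}, which upgrades constancy of \emph{short} walk numbers into full punctual walk-regularity. First I would translate the hypotheses into $(\ell,m)$-walk-regularity by means of the equivalences recorded just after Proposition~\ref{teor (l,m)-equivalencies}: $m$-partial distance-regularity is exactly $(m,m)$-walk-regularity, while $m$-partial distance-regularity together with $a_m$ well-defined is exactly $(m+1,m)$-walk-regularity.

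For the first statement I start from $(m,m)$-walk-regularity and iterate Proposition~\ref{(l,m)->(l+1,m-1)}, producing the chain $(m,m)\to(m+1,m-1)\to\cdots\to(d-1,2m-d+1)$. Every step is legitimate as long as the second coordinate of the source pair stays $\ge 1$; the binding (last) step has source $(d-2,2m-d+2)$, so the requirement $2m-d+2\ge 1$, i.e. $m\ge (d-1)/2$, is precisely the hypothesis. Once $\G$ is $(d-1,2m-d+1)$-walk-regular, then for each fixed $h\le 2m-d+1$ the number of walks of length $\ell\le d-1$ between vertices at distance $h$ is constant, so Lemma~\ref{cheking m-wr} makes $\G$ $h$-punctually walk-regular; letting $h$ range over all values $\le 2m-d+1$ gives $(2m+1-d)$-walk-regularity. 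The second statement is identical but starts one notch higher: from $(m+1,m)$-walk-regularity the chain runs $(m+1,m)\to\cdots\to(d-1,2m-d+2)$, whose last admissible step has source $(d-2,2m-d+3)$ and so needs $2m-d+3\ge 1$, i.e. $m\ge (d-2)/2$; Lemma~\ref{cheking m-wr} then yields $(2m+2-d)$-walk-regularity.

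For the bipartite statement the extra unit comes from two effects acting together. Because no neighbour of $v$ can lie at distance $m$ from $u$ when $\partial(u,v)=m$ (equivalently $\alpha_m=0$ in the bipartite case), the intersection number $a_m=0$ is automatically well-defined, so $m$-partial distance-regularity already delivers $(m+1,m)$-walk-regularity for free. Moreover the bipartite half of Lemma~\ref{cheking m-wr} requires constancy of walks only up to length $d-2$. Hence I would run the chain $(m+1,m)\to\cdots\to(d-2,2m-d+3)$, whose last step has source $(d-3,2m-d+4)$ and thus needs $2m-d+4\ge 1$, i.e. $m\ge (d-3)/2$, and then finish with the bipartite form of Lemma~\ref{cheking m-wr} to obtain $(2m+3-d)$-walk-regularity.

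The one genuinely delicate point is the off-by-one bookkeeping, and I expect this to be the main place to be careful: one must check that each chain terminates at exactly the pair $(\ell,m')$ whose first coordinate matches the length threshold of the relevant form of Lemma~\ref{cheking m-wr} ($d-1$ in general, $d-2$ when bipartite) and whose admissibility condition $m'\ge 1$ reproduces \emph{precisely} the stated thresholds $m\ge (d-1)/2$, $m\ge (d-2)/2$, $m\ge (d-3)/2$. The only substantive new observation needed is that in a bipartite graph $a_m$ is always well-defined; this is exactly what lets the bipartite bound gain a full two units over the generic one, one unit from the automatic $a_m=0$ (a higher starting point of the chain) and one unit from the sharper, length-$(d-2)$ version of the checking lemma. (Cases with $m$ so large that $2m+c-d\ge D$ simply collapse to distance-regularity, which is consistent with the statement.)
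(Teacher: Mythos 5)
Your proof is correct and follows essentially the same route as the paper's: translate the hypotheses into $(m,m)$- resp.\ $(m+1,m)$-walk-regularity, iterate Proposition~\ref{(l,m)->(l+1,m-1)} up to first coordinate $d-1$ (resp.\ $d-2$ in the bipartite case), and finish with Lemma~\ref{cheking m-wr}, using the observation that $a_m=0$ is automatically well-defined for bipartite graphs. Your explicit bookkeeping of the admissibility constraints ($m'\ge 1$ at the last shift) is a welcome elaboration of what the paper leaves implicit.
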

\begin{proof} For the first statement, observe that $\G$ is
$(m,m)$-walk-regular, so by Proposition \ref{(l,m)->(l+1,m-1)}
it is $(d-1,2m+1-d)$-walk-regular. By Lemma \ref{cheking m-wr},
$\G$ is therefore $(2m+1-d)$-walk-regular. The proof of the
second statement is similar, starting from
$(m+1,m)$-walk-regularity. Also for the third statement we can
start from $(m+1,m)$-walk-regularity, because $a_m=0$ is well-defined
for a bipartite graph. Now it follows that $\G$ is
$(d-2,2m+3-d)$-walk-regular, and by Lemma \ref{cheking m-wr},
$\G$ is $(2m+3-d)$-walk-regular.
\end{proof}

Note that this proposition also relates Problems \ref{problem1} and
\ref{problem2}. For example, if $m_{pdr}(d)=d-1$ (for some $d$),
then there is a $(d-2)$-partially distance-regular graph that is not
distance-regular. This graph would be $(d-3)$-walk-regular by the
proposition, which would imply that $m_{wr,d}(d)\ge d-2$. In general
it shows that $m_{wr,d}(d) \ge 2m_{pdr}(d)-d$.

As it is known, graphs with few distinct eigenvalues have many
regularity features. For instance, every (regular, connected)
graph with three distinct eigenvalues is strongly regular (that
is, distance-regular with diameter two). Any graph with four
distinct eigenvalues is known to be walk-regular, and the
bipartite ones with four distinct eigenvalues are always
distance-regular. This also follows from Propositions
\ref{mpdr->2m+1-d} ($d=3, m=1$) and \ref{D/d-1 suffices}. Moreover,
if $\G$ has four distinct eigenvalues and
$a_1$ is well-defined, then it is $1$-walk-regular. If in addition
$c_2$ is well-defined, then the graph is distance-regular by
Proposition \ref{D/d-1 suffices}. Similarly, if $\G$ is a
bipartite graph with five distinct eigenvalues then $\G$ is
$1$-walk-regular. Moreover, if $c_2$ is well-defined, then $\G$
is distance-regular.

A natural question would be to find out when the converse of
Proposition \ref{(l,m)->(l+1,m-1)} is true. At least the
following can be said (we omit the proofs):
\begin{propo}
Let $m \le D, m \le d-1$. Then $\G$ is $(m,m)$-walk-regular if
and only if it is $(m+1,m-1)$-walk-regular and the
      intersection number $c_m$ is well-defined.
\end{propo}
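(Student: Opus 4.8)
The plan is to establish the two implications separately. I expect the forward direction to follow almost immediately from results already in hand, so that all the real work lies in the converse.

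For the forward implication, I would first invoke the equivalence recorded earlier that an $(m,m)$-walk-regular graph is the same thing as an $m$-partially distance-regular graph. By the combinatorial description of the latter, the intersection numbers $c_i,a_i,b_i$ up to $c_m$ are all well-defined; in particular $c_m$ is well-defined, giving one of the two required conclusions. The other, $(m+1,m-1)$-walk-regularity, is exactly Proposition~\ref{(l,m)->(l+1,m-1)} applied with $\ell=m$, which is legitimate since the hypotheses $m\le d-1$ and $m\le D$ guarantee $d-1\ge \ell\ge m\ge 1$ and $m\le D$. Thus no new argument is needed here.

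For the converse, I would assume that $\G$ is $(m+1,m-1)$-walk-regular and that $c_m$ is well-defined, and aim to show $(m,m)$-walk-regularity, i.e.\ that $a_{uv}^{(\ell)}$ depends only on $\ell$ and $\partial(u,v)$ whenever $\partial(u,v)\le m$ and $\ell\le m$. The first step is a bookkeeping reduction: pairs $u,v$ with $\partial(u,v)\le m-1$ and $\ell\le m$ are already controlled by the hypothesis (since $m\le m+1$), and for pairs at distance exactly $m$ with $\ell<m$ there are simply no walks, so $a_{uv}^{(\ell)}=0$. Hence the only genuinely new quantity to pin down is $a_{uv}^{(m)}$ for vertices $u,v$ at distance $m$.

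The crux of the proof is the observation that a walk of length $m$ between two vertices at distance $m$ must be a geodesic, dropping the distance to $v$ by one at each step. Expanding $a_{uv}^{(m)}=\sum_{w\in\Gamma_1(u)}a_{wv}^{(m-1)}$ and noting $\partial(w,v)\ge \partial(u,v)-1=m-1$, I would argue that only the neighbours $w$ of $u$ with $\partial(w,v)=m-1$ contribute. These are exactly the vertices of $\Gamma_1(u)\cap\Gamma_{m-1}(v)$, whose number is the constant $c_m$ by assumption, and for each such $w$ the value $a_{wv}^{(m-1)}$ equals the constant $a_{m-1}^{(m-1)}$ by $(m+1,m-1)$-walk-regularity. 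Therefore $a_{uv}^{(m)}=c_m\,a_{m-1}^{(m-1)}$ is independent of $u$ and $v$, which completes the verification of $(m,m)$-walk-regularity. The only point I expect to require genuine care is the case analysis in the reduction step, ensuring that $a_{uv}^{(m)}$ with $\partial(u,v)=m$ is indeed the single quantity not already fixed by the hypotheses; the remaining computation is then forced.
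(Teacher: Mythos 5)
Your proposal is correct, but there is no proof in the paper to compare it with: the authors explicitly write ``we omit the proofs'' for this proposition and its companion, so your argument fills a gap rather than matching or departing from a published one. Checking it against the paper's toolkit: the forward direction is indeed immediate, since the equivalence $(m,m)$-walk-regular $\equiv$ $m$-partially distance-regular (stated after Proposition \ref{teor (l,m)-equivalencies}) yields that $c_m$ is well-defined, and Proposition \ref{(l,m)->(l+1,m-1)} with $\ell=m$ yields $(m+1,m-1)$-walk-regularity. For the converse, your reduction is exhaustive and sound: all $a_{uv}^{(\ell)}$ with $\ell\le m$ and $\partial(u,v)\le m-1$ are constants by hypothesis, $a_{uv}^{(\ell)}=0$ when $\ell<\partial(u,v)=m$, and for the one remaining quantity the identity
$$
a_{uv}^{(m)}=\sum_{w\in\Gamma_1(u)}a_{wv}^{(m-1)}
=\sum_{w\in\Gamma_1(u)\cap\Gamma_{m-1}(v)}a_{wv}^{(m-1)}
=c_m\,a_{m-1}^{(m-1)}\qquad(\partial(u,v)=m)
$$
is valid because $\partial(w,v)\ge m-1$ for every $w\in\Gamma_1(u)$ and $a_{wv}^{(m-1)}=0$ unless $\partial(w,v)=m-1$; this is the same counting identity the paper itself uses in the proof of Proposition \ref{weakwalks->mpdr}, so your argument is very much in the spirit of the text. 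Two small remarks. First, the condition $m\ge 1$ needed to invoke Proposition \ref{(l,m)->(l+1,m-1)} does not follow from $m\le d-1$ and $m\le D$ as you claim; it is instead implicit in the statement itself (otherwise $c_m$ and $(m+1,m-1)$-walk-regularity are meaningless). Second, your converse never uses walks of length $m+1$, so it actually proves the formally stronger implication that $(m,m-1)$-walk-regularity together with a well-defined $c_m$ already gives $(m,m)$-walk-regularity; the stated hypothesis $(m+1,m-1)$ is what makes the equivalence an ``if and only if,'' since the forward direction produces it.
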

\begin{propo}
  Let $m \le D, m \le d-2$.
 Then $\G$ is $(m+1,m)$-walk-regular if and
      only if it is $(m+2,m-1)$-walk-regular and the
      intersection numbers $c_m$,
$a_m$, and $b_m$ are well-defined.
\end{propo}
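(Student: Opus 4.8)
The plan is to prove the two implications separately. The forward direction will be essentially a repackaging of results already established in the paper, while the converse is the substantive part and will be handled by a short induction on the walk length.

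For the forward direction, suppose $\G$ is $(m+1,m)$-walk-regular. I would first apply Proposition \ref{(l,m)->(l+1,m-1)} with $\ell=m+1$; this is legitimate because $m\le d-2$ guarantees $d-1\ge \ell=m+1$, so its hypotheses hold, and it yields at once that $\G$ is $(m+2,m-1)$-walk-regular. For the intersection numbers I invoke the equivalence recorded just after Proposition \ref{teor (l,m)-equivalencies}: a $(m+1,m)$-walk-regular graph is $m$-partially distance-regular with $a_m$ (and hence $b_m$) well-defined. Since $m$-partial distance-regularity already makes $c_m$ well-defined, all three of $c_m$, $a_m$, $b_m$ are well-defined, and this direction is complete.

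For the converse, suppose $\G$ is $(m+2,m-1)$-walk-regular and $c_m,a_m,b_m$ are well-defined. Walks of length $\ell\le m+1$ between vertices at distance at most $m-1$ are already controlled by the hypothesis (which covers lengths up to $m+2$ there), so the only thing left is to show that for vertices $u,v$ with $\partial(u,v)=m$ the number $(\A^\ell)_{uv}$ depends only on $\ell$, for each $\ell\le m+1$. I would expand $(\A^\ell)_{uv}=\sum_{w\in\Gamma_1(u)}(\A^{\ell-1})_{wv}$ and split the sum according to whether $w$ lies at distance $m-1$, $m$, or $m+1$ from $v$; the numbers of such neighbours are the constants $c_m$, $a_m$, $b_m$. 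The distance-$(m+1)$ part contributes nothing, since $\ell-1\le m<m+1$ leaves no walk of length $\ell-1$ from such a $w$ to $v$ (so in fact $b_m$ is never used, consistent with $b_m=\delta-c_m-a_m$ being forced anyway), and the distance-$(m-1)$ part contributes $c_m\,a_{m-1}^{(\ell-1)}$, a constant by $(m+2,m-1)$-walk-regularity.

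The key step, and the one I expect to be the only real obstacle, is the distance-$m$ part of the sum, which is self-referential: it again counts walks of length $\ell-1$ between vertices at distance $m$. I would resolve this by induction on $\ell$. For $\ell<m$ the quantity $(\A^\ell)_{uv}$ is zero; for $\ell=m$ the distance-$m$ part still vanishes because $\ell-1<m$, giving $(\A^m)_{uv}=c_m\,a_{m-1}^{(m-1)}$, constant; and for $\ell=m+1$ the distance-$m$ part becomes $a_m\,a_m^{(m)}$, constant by the just-established case $\ell=m$. In every case one obtains the recursion $a_m^{(\ell)}=c_m\,a_{m-1}^{(\ell-1)}+a_m\,a_m^{(\ell-1)}$ (with the convention that $a_m^{(\ell-1)}=0$ when $\ell-1<m$), which shows $(\A^\ell)_{uv}$ is independent of the pair $(u,v)$ for all $\ell\le m+1$. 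Combined with the distance-$\le m-1$ cases from the hypothesis, this is exactly $(m+1,m)$-walk-regularity. One could instead route the converse through Proposition \ref{teor (l,m)-equivalencies}, reducing it to the identities $p_m(\A)\circ\A_m=\A_m$ and $p_{m+1}(\A)\circ\A_m=\O$ (the cases $p_i(\A)\circ\A_m=\O$ for $i<m$ being automatic by degree), but the direct walk count seems cleaner and avoids re-deriving those.
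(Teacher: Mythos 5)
Your proof is correct, but there is nothing in the paper to compare it against: this proposition is one of the two results the authors introduce with ``we omit the proofs'', so your argument in effect supplies a missing proof rather than an alternative to one. Both halves check out. The forward direction correctly cites Proposition~\ref{(l,m)->(l+1,m-1)} (whose hypothesis $\ell=m+1\le d-1$ is exactly where $m\le d-2$ enters) together with the equivalence listed after Proposition~\ref{teor (l,m)-equivalencies} between $(m+1,m)$-walk-regularity and $m$-partial distance-regularity with $a_m$ (and $b_m$) well-defined; $m$-partial distance-regularity includes well-definedness of $c_m$. For the converse, the decomposition $(\A^{\ell})_{uv}=\sum_{w\in\Gamma_1(u)}(\A^{\ell-1})_{wv}$ for $\partial(u,v)=m$, split according to $\partial(w,v)\in\{m-1,m,m+1\}$, works exactly as you say: the distance-$(m+1)$ part vanishes because $\ell-1\le m$; the distance-$(m-1)$ part equals $c_m\,a_{m-1}^{(\ell-1)}$, constant because $\ell-1\le m+1\le m+2$ and hence covered by $(m+2,m-1)$-walk-regularity; and the self-referential distance-$m$ part vanishes at $\ell=m$ (since then $\ell-1<m$) and equals $a_m\,a_m^{(m)}$ at $\ell=m+1$, where $a_m^{(m)}=c_m\,a_{m-1}^{(m-1)}$ is constant by the previous step of the induction. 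Your side remark that $b_m$ is never used is also consistent with the statement: since $\G$ is regular, $b_m=\delta-c_m-a_m$ is automatically well-defined once $c_m$ and $a_m$ are, so its presence in the hypothesis is redundant rather than load-bearing. The only implicit assumption is $m\ge 1$, which the statement itself forces (it mentions $c_m$ and $(m+2,m-1)$-walk-regularity) and which is also needed to invoke Proposition~\ref{(l,m)->(l+1,m-1)}.
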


It seems complicated to extend this further; for example,
$(m+2,m)$-walk-regularity implies $(m+3,m-1)$-walk-regularity,
but for the reverse we do not know how to avoid using that
$c_{m+1}$ is well-defined (besides $c_m$, $a_m$, $b_m$).
But $(m+2,m)$-walk-regularity does
not necessarily imply that $c_{m+1}$ is well-defined.

An interesting example is the graph F168F from the Foster
Census \cite{rcmd09}; it is a (bipartite) graph with $D=8$ and
$d=20$. The intersection numbers are well-defined up to $b_5$,
so the graph is $(6,5)$-walk-regular, and hence also
$(7,4)$-walk-regular. Moreover, it is $(10,3)$-walk-regular,
and $2$-walk-regular.

As a final result in this section, we generalize Proposition
\ref{1-puncw->walk}. Note that every (regular) graph is
$(\ell,0)$-walk-regular for $\ell \le 2$, and that $q_h=x$ for
$h=1$.
\begin{propo}\label{h-punc->walkregular} Let $h \le D$ and
let $\G$ be $h$-punctually distance-polynomial, with
$\A_h=q_h(\A)$. Let $\ell+1$ be the number of distinct
eigenvalues $\lambda_i$ for which $q_h(\lambda_i)=0$. If $\G$ is
$h$-punctually spectrum-regular and $(\ell,0)$-walk-regular,
then it is walk-regular (and spectrum-regular) and
\begin{equation}\label{m_{hi}}
m_{hi}=\frac{q_h(\lambda_i)}{q_h(\lambda_0)}\frac{m_i}n\qquad (i=0,1,\dots,d).
\end{equation}
\end{propo}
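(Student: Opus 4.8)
The plan is to extract everything from a single matrix identity obtained by evaluating the diagonal entries of $\E_i\A_h$ in two ways, and then to pin down the eigenvalues at which $q_h$ vanishes by a Vandermonde argument fed by the $(\ell,0)$-walk-regularity hypothesis. First I would use that $\E_i=\lambda_i^*(\A)$ commutes with $q_h(\A)=\A_h$ and that $q_h(\A)\E_i=q_h(\lambda_i)\E_i$ (since $\A\E_i=\lambda_i\E_i$). Taking the $(u,u)$ entry of $\E_i\A_h$ on one side gives $q_h(\lambda_i)(\E_i)_{uu}=q_h(\lambda_i)\,m_u(\lambda_i)$; expanding the product on the other side gives $(\E_i\A_h)_{uu}=\sum_{w\in\Gamma_h(u)}(\E_i)_{uw}=\sum_{w\in\Gamma_h(u)}m_{uw}(\lambda_i)$, which $h$-punctual spectrum-regularity turns into $|\Gamma_h(u)|\,m_{hi}$. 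By Lemma \ref{lema3.1} the graph $\G_h$ is regular of degree $q_h(\lambda_0)$, so $|\Gamma_h(u)|=q_h(\lambda_0)$, yielding the key relation
\begin{equation}\label{keyrel}
q_h(\lambda_i)\,m_u(\lambda_i)=q_h(\lambda_0)\,m_{hi}\qquad(u\in V;\ i=0,1,\dots,d),
\end{equation}
where $q_h(\lambda_0)=\|\A_h\|^2>0$ because $\G_h$ has edges ($h\le D$).

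Next I would split according to whether $q_h(\lambda_i)$ vanishes. For the $d-\ell$ indices $i$ with $q_h(\lambda_i)\neq 0$ (which include $i=0$), relation (\ref{keyrel}) immediately gives $m_u(\lambda_i)=q_h(\lambda_0)m_{hi}/q_h(\lambda_i)$, independent of $u$. The real work is for the $\ell+1$ indices forming $S=\{i:q_h(\lambda_i)=0\}$, for which (\ref{keyrel}) is vacuous. Here I would invoke $(\ell,0)$-walk-regularity: writing $a_u^{(\ell')}=\sum_{i=0}^d m_u(\lambda_i)\lambda_i^{\ell'}$ from (\ref{crossed-mul->num-walks}) and subtracting the already-constant contributions of the indices $i\notin S$, the quantities $\sum_{i\in S}m_u(\lambda_i)\lambda_i^{\ell'}$ are constant in $u$ for each $\ell'=0,1,\dots,\ell$. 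Since $|S|=\ell+1$ and the eigenvalues $\{\lambda_i\}_{i\in S}$ are distinct, the associated $(\ell+1)\times(\ell+1)$ Vandermonde matrix $(\lambda_i^{\ell'})$ (rows indexed by $\ell'$, columns by $i\in S$) is invertible, so the unknowns $\{m_u(\lambda_i)\}_{i\in S}$ are determined by these constants and are likewise independent of $u$. Hence $m_u(\lambda_i)$ is constant in $u$ for every $i$, i.e. $\G$ is spectrum-regular (equivalently walk-regular), and the normalization $\sum_{u}m_u(\lambda_i)=m_i$ forces $m_u(\lambda_i)=m_i/n$.

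Finally, the formula (\ref{m_{hi}}) drops out of (\ref{keyrel}): substituting $m_u(\lambda_i)=m_i/n$ gives $q_h(\lambda_i)\,m_i/n=q_h(\lambda_0)\,m_{hi}$, so $m_{hi}=\frac{q_h(\lambda_i)}{q_h(\lambda_0)}\frac{m_i}{n}$ when $q_h(\lambda_i)\neq 0$; and for $i\in S$ the same relation reads $0=q_h(\lambda_0)m_{hi}$, so $m_{hi}=0$, matching the claimed value since $q_h(\lambda_i)=0$ there. The main obstacle is the Vandermonde step, and it is exactly the definition of $\ell$ (one less than the number of vanishing eigenvalues) that makes the number of walk-length constraints coincide with the number of undetermined local multiplicities, so the linear system is square and nonsingular; the remainder is bookkeeping around the identity (\ref{keyrel}).
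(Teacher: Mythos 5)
Your proposal is correct and follows essentially the same route as the paper's own proof: the identity $q_h(\lambda_i)\,m_u(\lambda_i)=q_h(\lambda_0)\,m_{hi}$ obtained from the diagonal of $\E_i\A_h$ (using Lemma \ref{lema3.1} and $h$-punctual spectrum-regularity), followed by the square Vandermonde system from $(\ell,0)$-walk-regularity to fix the local multiplicities at the $\ell+1$ eigenvalues where $q_h$ vanishes. Your write-up is slightly more explicit than the paper's (spelling out the normalization $\sum_u m_u(\lambda_i)=m_i$ and the case $m_{hi}=0$ for $i$ with $q_h(\lambda_i)=0$), but the ideas and their order are the same.
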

\begin{proof} Let $\mathcal{I}$ denote the set of indices $i$ such
that $q_h(\lambda_i)=0$, so $|\mathcal{I}|=\ell+1$. If $\G$ is
$h$-punctually spectrum-regular then
$$
q_h(\lambda_0) m_{hi}=\sum_{v \in
\Gamma_h(u)}(\E_i)_{vu}=(\A_h\E_i)_{uu}=(q_h(\A)\E_i)_{uu} =
q_h(\lambda_i)(\E_i)_{uu}\qquad (u\in V),
$$
which shows that $m_u(\lambda_i)=(\E_i)_{uu}$ is a constant,
and $m_{0i}=\frac{q_h(\lambda_0)}{q_h(\lambda_i)}m_{hi}$, for
every $i\not\in \mathcal{I}$. Moreover, if $\G$ is
$(\ell,0)$-walk-regular, then (\ref{crossed-mul->num-walks})
yields:
$$
\sum_{i\in \mathcal{I}} m_u(\lambda_i) \lambda_i^{\ell'} =
a^{(\ell')}-\sum_{i\not\in \mathcal{I}} m_{0i} \lambda_i^{\ell'}\qquad (0\le \ell'\le \ell).
$$
This is a linear system of $\ell+1$ equations with $\ell+1$
unknowns $m_u(\lambda_i)$, and this system has a unique
solution as it has a Vandermonde matrix of coefficients. Hence
$m_u(\lambda_i)=\frac{m_i}{n}$ for all $0\le i\le d$ and we get
(\ref{m_{hi}}).
\end{proof}

With reference to (\ref{m_{hi}}), we note that the
multiplicities $m_i$ can be computed from the highest degree
predistance polynomial as $
m_i=(-1)^i\frac{\pi_0p_d(\lambda_0)}{\pi_ip_d(\lambda_i)}$, cf.
\cite{fg2}.

\section{Spectral distance-degree
characterizations}\label{sec:4}

In this section we will obtain results that have the same
flavor as the spectral excess theorem \cite{fg2}. This theorem
states that the average degree $\overline{\delta}_d$ of the
distance-$d$ graph is at most $p_d(\lambda_0)$ with equality if
and only if the graph is distance-regular (for short proofs of
this theorem, see \cite{vd08,fgg09}). The following result
gives a quasi-spectral characterization of punctually
distance-polynomial graphs, in terms of the average degree
$\overline{\delta}_h=\frac1n \som(\A_h)$ of the distance-$h$
graph $\G_h$ and the {\em average crossed local multiplicities}
$$\overline{m}_{hi}=\frac1{n\overline{\delta}_h}\sum_{\partial(u,v)=h}m_{uv}(\lambda_i).$$
\begin{propo}
\label{propo h-punctual d-p} Let $h \le D$. Then
$$
 \overline{\delta}_h\le\frac
1n\left(\sum_{i=0}^d\frac{\overline{m}_{hi}^2}{m_i}\right)^{-1}
$$
with equality if and only if $\G$ is $h$-punctually
distance-polynomial. If $\A_h=q_h(\A)$, then
$$
\delta_h=q_h(\lambda_0) \qquad and \qquad \overline{m}_{hi}=
\frac{q_h(\lambda_i)}{q_h(\lambda_0)}\frac{m_i}{n}\qquad (i=0,1,\dots,d).
$$
\end{propo}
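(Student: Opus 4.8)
The plan is to read the asserted inequality as a Cauchy--Schwarz (equivalently, orthogonal-projection) statement inside the adjacency algebra $\calA$ equipped with the scalar product $\langle\matrixR,\matrixS\rangle=\frac1n\tr(\matrixR\matrixS)=\frac1n\som(\matrixR\circ\matrixS)$ introduced in Section~\ref{subsec_alg}. First I would record the two inner-product identities that carry the argument. Since $\E_i\E_j=\delta_{ij}\E_i$ and $\tr\E_i=m_i$, the idempotents form an orthogonal basis of $\calA$ with $\|\E_i\|^2=\langle\E_i,\E_i\rangle=\frac1n\tr\E_i=\frac{m_i}{n}$. Using the Hadamard form of the scalar product together with $(\E_i)_{uv}=m_{uv}(\lambda_i)$, I then compute, straight from the definition of $\overline{m}_{hi}$,
$$
\langle\A_h,\E_i\rangle=\frac1n\som(\A_h\circ\E_i)=\frac1n\sum_{\partial(u,v)=h}(\E_i)_{uv}=\overline{\delta}_h\,\overline{m}_{hi}.
$$
Finally I recall from Section~\ref{subsec_alg} that $\|\A_h\|^2=\overline{\delta}_h$.

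Next I would form the orthogonal projection $P_{\calA}(\A_h)$ of $\A_h$ onto $\calA$. Because the $\E_i$ are orthogonal, this projection is $\sum_i\frac{\langle\A_h,\E_i\rangle}{\|\E_i\|^2}\E_i=\sum_i\frac{n\overline{\delta}_h\overline{m}_{hi}}{m_i}\E_i$, whose squared norm is $\sum_i\bigl(\frac{n\overline{\delta}_h\overline{m}_{hi}}{m_i}\bigr)^2\frac{m_i}{n}=n\overline{\delta}_h^2\sum_i\frac{\overline{m}_{hi}^2}{m_i}$. The projection inequality $\|P_{\calA}(\A_h)\|^2\le\|\A_h\|^2$ then reads $n\overline{\delta}_h^2\sum_i\frac{\overline{m}_{hi}^2}{m_i}\le\overline{\delta}_h$; dividing by $\overline{\delta}_h>0$ (legitimate since $h\le D$ forces $\G_h$ to be nonempty) and rearranging yields exactly the claimed bound. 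Equality in the projection inequality is equivalent to $\A_h=P_{\calA}(\A_h)\in\calA$, which is precisely the definition of $h$-punctual distance-polynomiality, settling the equality clause.

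For the two supplementary formulas I would assume $\A_h=q_h(\A)$ and expand $q_h(\A)=\sum_j q_h(\lambda_j)\E_j$. Orthogonality gives $\langle\A_h,\E_i\rangle=q_h(\lambda_i)\frac{m_i}{n}$, while Lemma~\ref{lema3.1} already tells us that $\G_h$ is regular of degree $q_h(\lambda_0)$, so $\overline{\delta}_h=\delta_h=q_h(\lambda_0)$. Dividing the first identity by $\overline{\delta}_h$ and invoking $\langle\A_h,\E_i\rangle=\overline{\delta}_h\overline{m}_{hi}$ from above gives $\overline{m}_{hi}=\frac{q_h(\lambda_i)}{q_h(\lambda_0)}\frac{m_i}{n}$, as required.

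I do not expect a genuine obstacle: the entire argument is the standard ``best approximation of $\A_h$ by a polynomial in $\A$'' that underlies the spectral excess theorem. The only delicate point is the bookkeeping of the normalizing constants $\|\E_i\|^2=m_i/n$ and $\|\A_h\|^2=\overline{\delta}_h$; a misplaced factor of $n$ there is exactly what would spoil the final rearrangement into the stated form.
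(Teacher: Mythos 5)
Your proof is correct and follows essentially the same route as the paper: both project $\A_h$ orthogonally onto $\calA$ using the idempotent basis $\{\E_i\}$, compute $\|\widetilde{\A_h}\|^2=n\overline{\delta}_h^2\sum_i\overline{m}_{hi}^2/m_i$, compare with $\|\A_h\|^2=\overline{\delta}_h$, and characterize equality via Pythagoras as $\A_h\in\calA$, then invoke Lemma~\ref{lema3.1} for the degree and multiplicity formulas. The only cosmetic difference is that the paper writes the projection as an explicit polynomial $q_h=n\overline{\delta}_h\sum_i\frac{\overline{m}_{hi}}{m_i}\lambda_i^*$ and evaluates it at $\lambda_i$, whereas you match Fourier coefficients in the idempotent expansion; these are the same computation.
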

\begin{proof}
We denote by $\widetilde{\A_h}$ the orthogonal projection of
$\A_h$ onto ${\cal A}$. By using the orthogonal basis
consisting of the matrices $\E_i=\lambda_i^*(\A)$,
$i=0,1,\ldots,d$, we have
$$
\widetilde{\A_h}=\sum_{i=0}^d \frac{\langle \A_h,\E_i\rangle}
{\|\E_i\|^2}\E_i=\sum_{i=0}^d \frac
1{m_i}\left(\sum_{\partial(u,v)=h}(\E_i)_{uv}\right)\E_i=
n\overline{\delta}_h\sum_{i=0}^d\frac{\overline{m}_{hi}}{m_i}\E_i.
$$
Hence the orthogonal projection of $\A_h$ onto ${\cal A}$ is
the matrix $q_h(\A)$, where
\begin{equation}\label{q_h}
q_h=n\overline{\delta}_h\sum_{i=0}^d\frac{\overline{m}_{hi}}{m_i}\lambda_i^*.
\end{equation}
Since
$$
\|\widetilde{\A_h}\|^2=\langle q_h,q_h\rangle=
n^2\overline{\delta}_h^2\sum_{i=0}^d\frac{\overline{m}_{hi}^2}{m_i^2}\,\frac{m_i}{n}
=n\overline{\delta}_h^2\sum_{i=0}^d\frac{\overline{m}_{hi}^2}{m_i}
$$
and $\|\A_h\|^2=\overline{\delta}_h$, the upper bound on
$\overline{\delta}_h$ follows from $\|\widetilde{\A_h}\|\le
\|\A_h\|$. Moreover, Pythagoras's theorem says that the scalar
condition $\|\widetilde{\A_h}\|= \|\A_h\|$ is equivalent to
$\A_h\in{\cal A}$ and hence to $\G$ being $h$-punctually
distance-polynomial. Moreover, it shows that if $\G$ is
punctually distance-polynomial, then $\A_h=q_h(\A)$, with $q_h$
as given in (\ref{q_h}). It follows from Lemma \ref{lema3.1}
that $\G_h$ is regular of degree
$\overline{\delta}_h=\delta_h=q_h(\lambda_0)$. Moreover, from
(\ref{q_h}) it follows that
$q_h(\lambda_i)=n\overline{\delta}_h\frac{\overline{m}_{hi}}{m_i}$,
and this gives the required expression for $\overline{m}_{hi}$.
\end{proof}

Let $\overline{a}_h^{(\ell)}$ be the average number of walks of
length $\ell$ between vertices at distance $h\le D$, and recall
from (\ref{omega_k}) that the leading coefficient $\omega_h$ of
$p_h$ satisfies $\omega_h^{-1}=\gamma_1\gamma_2\cdots
\gamma_h$. Now the following results are variations of
Proposition \ref{propo h-punctual d-p} for punctual
distance-regularity.
\begin{propo}
\label{pdr2cond} Let $h \le D$. Then
$$
 \overline{\delta}_h\le\frac
{p_h(\lambda_0)}{[\omega_h \overline{a}_h^{(h)}]^2}
$$
with equality if and only if $\G$ is $h$-punctually
distance-regular, which is the case if and only if
$\overline{a}_h^{(h)}= \gamma_1\gamma_2\cdots \gamma_h$ and
$\overline{\delta}_h=p_h(\lambda_0)$.
\end{propo}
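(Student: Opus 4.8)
The plan is to mimic the proof of Proposition~\ref{propo h-punctual d-p}, but to project onto the \emph{smaller} subspace $\mathcal{A}_h=\span\{p_0(\A),\dots,p_h(\A)\}=\span\{\I,\A,\dots,\A^h\}$ instead of onto all of $\mathcal{A}$; this is precisely what distinguishes $h$-punctual distance-\emph{regularity} (degree exactly $h$) from $h$-punctual distance-\emph{polynomiality}. Let $\widehat{\A_h}$ denote the orthogonal projection of $\A_h$ onto $\mathcal{A}_h$. Since the predistance polynomials form an orthogonal basis with $\|p_i(\A)\|^2=p_i(\lambda_0)$, one has $\widehat{\A_h}=\sum_{i=0}^h \frac{\langle \A_h,p_i(\A)\rangle}{p_i(\lambda_0)}p_i(\A)$.

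The key observation is that this sum collapses to its top term. For any pair $u,v$ with $\partial(u,v)=h$ and any $i<h$, every monomial $\A^j$ with $j\le i$ has $(\A^j)_{uv}=0$ (no walk can be shorter than the distance), so $(p_i(\A))_{uv}=0$; hence $\langle \A_h,p_i(\A)\rangle=\frac1n\sum_{\partial(u,v)=h}(p_i(\A))_{uv}=0$ for $i<h$. For $i=h$ only the leading monomial survives, giving $(p_h(\A))_{uv}=\omega_h(\A^h)_{uv}$ whenever $\partial(u,v)=h$, and therefore
$$
\langle \A_h,p_h(\A)\rangle=\frac{\omega_h}{n}\sum_{\partial(u,v)=h}(\A^h)_{uv}=\omega_h\,\overline{\delta}_h\,\overline{a}_h^{(h)},
$$
where I use that there are $n\overline{\delta}_h$ ordered pairs at distance $h$ and that $\overline{a}_h^{(h)}$ is the average of $(\A^h)_{uv}$ over them. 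Consequently $\widehat{\A_h}=\frac{\omega_h\overline{\delta}_h\overline{a}_h^{(h)}}{p_h(\lambda_0)}\,p_h(\A)$, and a one-line computation then yields $\|\widehat{\A_h}\|^2=\frac{\omega_h^2\overline{\delta}_h^2(\overline{a}_h^{(h)})^2}{p_h(\lambda_0)}$.

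Next I would invoke the projection inequality $\|\widehat{\A_h}\|^2\le\|\A_h\|^2=\overline{\delta}_h$. Since $h\le D$ and $\G$ is connected, a shortest path of length $D$ forces a pair at distance $h$, so $\overline{\delta}_h>0$ and we may divide by it; rearranging gives exactly $\overline{\delta}_h\le p_h(\lambda_0)/[\omega_h\overline{a}_h^{(h)}]^2$. By Pythagoras, equality holds if and only if $\A_h=\widehat{\A_h}\in\mathcal{A}_h$; as the projection is a scalar multiple $c\,p_h(\A)$ with $c=\frac{\omega_h\overline{\delta}_h\overline{a}_h^{(h)}}{p_h(\lambda_0)}>0$ (note $\overline{a}_h^{(h)}\ge1$, since at least one shortest path exists), this means $\A_h=c\,p_h(\A)$ has degree $h$, whence Lemma~\ref{lema3.1} forces $c=1$ and $\A_h=p_h(\A)$, i.e. $\G$ is $h$-punctually distance-regular.

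Finally I would verify the explicit two-condition characterization. If $\G$ is $h$-punctually distance-regular, then $\A_h=p_h(\A)$ gives $\overline{\delta}_h=\|p_h(\A)\|^2=p_h(\lambda_0)$ by Lemma~\ref{lema3.1}, while $(\A_h)_{uv}=1=\omega_h(\A^h)_{uv}$ for $\partial(u,v)=h$ forces $\overline{a}_h^{(h)}=\omega_h^{-1}=\gamma_1\gamma_2\cdots\gamma_h$ by~(\ref{omega_k}). Conversely, substituting these two values into the right-hand side of the bound makes it equal to $p_h(\lambda_0)=\overline{\delta}_h$, so equality is attained and the previous paragraph applies. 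The only genuine subtlety is the collapse of the projection to its top term; once that is established, everything else is bookkeeping.
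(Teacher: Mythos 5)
Your proposal is correct and follows essentially the same route as the paper: the identical key computation $\langle \A_h,p_h(\A)\rangle=\omega_h\overline{\delta}_h\overline{a}_h^{(h)}$, the projection/Cauchy--Schwarz bound $\|\widehat{\A_h}\|^2\le\|\A_h\|^2=\overline{\delta}_h$, Lemma~\ref{lema3.1} to pin down the equality case, and the same two-condition verification at the end. The only cosmetic difference is that you project onto $\span\{p_0(\A),\dots,p_h(\A)\}$ and then show the lower-order Fourier coefficients vanish, whereas the paper projects directly onto the one-dimensional span $\langle p_h(\A)\rangle$ — the two projections coincide, as your collapse argument shows.
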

\begin{proof}
First, observe that
$$
\langle \A_h,p_h(\A)\rangle  =  \frac 1n
\sum_{\partial(u,v)=h}(p_h(\A))_{uv}=
\frac{\omega_h}{n}
\sum_{\partial(u,v)=h}a^{(h)}_{uv}=\omega_h\overline{\delta}_h \overline{a}_h^{(h)}.
$$
Thus, the orthogonal projection  of $\A_h$ onto $\langle
p_h(\A) \rangle$ is
$\breve{\A_h}=\frac{\omega_h\overline{\delta}_h
\overline{a}_h^{(h)}}{p_h(\lambda_0)}\p_h(\A)$,  and
$$
\frac{[\omega_h\overline{\delta}_h \overline{a}_h^{(h)}]^2}{p_h(\lambda_0)}=
\|\breve{\A_h}\|^2 \le  \|\A_h\|^2=\overline{\delta}_h
$$
gives the claimed inequality for $\overline{\delta}_h$
(alternatively, it follows from Cauchy-Schwarz). As before, it
is clear that equality holds if and only if
$\A_h=\breve{\A_h}$. Using Lemma \ref{lema3.1}, this is
equivalent to $\A_h=p_h(\A)$ ($\G$ being $h$-punctually
distance-regular). Equality thus implies that
$\overline{\delta}_h=p_h(\lambda_0)$ and hence that
$\overline{a}_h^{(h)}=\omega_h^{-1}= \gamma_1\gamma_2\cdots
\gamma_h$. To complete the argument, note that the latter
implies that equality holds in the inequality.
\end{proof}

The bound of Proposition \ref{propo h-punctual d-p} is more
restrictive than that of Proposition \ref{pdr2cond}. This
follows from the fact that $\A_h$ and $\widetilde{\A_h}$ have
the same projection $\breve{\A_h}$ onto $\langle p_h(\A)
\rangle$, and hence that $\|\breve{\A_h}\|\le
\|\widetilde{\A_h}\| \le \|\A_h\|$. This means that the bound
of Proposition \ref{propo h-punctual d-p} is sandwiched between
the average degree of $\G_h$ and the bound of Proposition
\ref{pdr2cond}. Thus, the tighter the latter bound is, the
tighter the first one is. For a better comparison of the
bounds, notice that a simple computation gives that
$$
\overline{a}_h^{(h)}=\sum_{i=0}^d \overline{m}_{hi}\lambda_i^h =
\frac1{\omega_h} \sum_{i=0}^d\overline{m}_{hi}p_h(\lambda_i)  \qquad (i=0,1,\dots,d).
$$
We thus find that
$$
\overline{\delta}_h\le
\frac1n\left(\sum_{i=0}^d\frac{\overline{m}_{hi}^2}{m_i}\right)^{-1} \le \frac
{p_h(\lambda_0)}{\omega_h^2}\left(\sum_{i=0}^d \overline{m}_{hi}\lambda_i^h\right)^{-2}
=p_h(\lambda_0)\left(\sum_{i=0}^d\overline{m}_{hi}p_h(\lambda_i)\right)^{-2}.$$

As we shall see in more detail in the next section, Proposition
\ref{pdr2cond} is a generalization of the spectral excess
theorem, at least if we combine it with Corollary
\ref{D-punc->drg}. For the next proposition this is also the
case; by considering the case $h=D=d$.
\begin{propo}
\label{prop escalar1 hdr } Let $h \le D$ and let $\G$ be such
that $\langle p_i(\A),\A_h\rangle=0$ for $i=h+1,\dots,d$. Then
$\overline{\delta}_h\leq p_h(\lambda_0)$ with equality if and
only if $\G$ is $h$-punctually distance-regular.
\end{propo}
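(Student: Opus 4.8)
The plan is to run exactly the Cauchy--Schwarz (orthogonal projection) argument behind the spectral excess theorem, which is the special case $h=D=d$, but now leveraging the hypothesis to play the role that ``extremal distance'' automatically plays there. The whole proof hinges on one identity: under the stated assumption,
\[
\langle \A_h, p_h(\A)\rangle = \overline{\delta}_h.
\]
Once this is established, the inequality and its equality case follow in a couple of lines.

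To prove the identity, I would start from the Hoffman polynomial decomposition (\ref{polHof}), writing $p_h = H - \sum_{i\neq h} p_i$, so that $p_h(\A) = \J - \sum_{i\neq h} p_i(\A)$ and hence
\[
\langle \A_h, p_h(\A)\rangle = \langle \A_h, \J\rangle - \sum_{i\neq h}\langle \A_h, p_i(\A)\rangle .
\]
The first term equals $\frac1n\som(\A_h\circ\J)=\frac1n\som(\A_h)=\overline{\delta}_h$. In the sum, the terms with $i>h$ vanish by the hypothesis $\langle p_i(\A),\A_h\rangle = 0$, while the terms with $i<h$ vanish automatically: since $\deg p_i = i < h$, the matrix $p_i(\A)$ is a combination of $\I,\A,\dots,\A^{h-1}$, whose $uv$-entries are all zero when $\partial(u,v)=h$ (there are no walks of length less than $h$ between vertices at distance $h$), so $\langle \A_h, p_i(\A)\rangle=0$. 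Thus only the $i=h$ contribution survives and the identity follows. Now Cauchy--Schwarz together with $\|\A_h\|^2=\overline{\delta}_h$ and $\|p_h(\A)\|^2=p_h(\lambda_0)$ gives
\[
\overline{\delta}_h = \langle \A_h, p_h(\A)\rangle \le \|\A_h\|\,\|p_h(\A)\| = \sqrt{\overline{\delta}_h}\,\sqrt{p_h(\lambda_0)},
\]
and squaring and cancelling $\overline{\delta}_h$ yields $\overline{\delta}_h \le p_h(\lambda_0)$.

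For the equality statement, Cauchy--Schwarz is tight precisely when $\A_h = \lambda\, p_h(\A)$ for some scalar $\lambda$. In that case $\A_h\in\calA$, so $\G$ is $h$-punctually distance-polynomial with $\A_h=q_h(\A)$ where $q_h=\lambda p_h$ has degree $h$; Lemma \ref{lema3.1} then forces $q_h=p_h$, i.e.\ $\lambda=1$ and $\A_h=p_h(\A)$, so $\G$ is $h$-punctually distance-regular. Conversely, $\A_h=p_h(\A)$ immediately gives $\overline{\delta}_h=\|\A_h\|^2=\|p_h(\A)\|^2=p_h(\lambda_0)$. The only step requiring genuine care is the vanishing of the inner products $\langle \A_h,p_i(\A)\rangle$ for $i\neq h$; but this splits cleanly into the automatic low-degree case and the hypothesis-driven high-degree case, so I expect no real obstacle — the content of the proposition is really that the hypothesis supplies exactly the cancellations that extremality supplies for free in the spectral excess theorem.
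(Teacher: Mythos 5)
Your proof is correct and is essentially the paper's own argument: both reduce to the identity $\langle \A_h,p_h(\A)\rangle=\langle\A_h,\J\rangle=\overline{\delta}_h$, obtained from the Hoffman polynomial with the hypothesis killing the terms $i>h$ and the degree argument killing the terms $i<h$, followed by a projection/Cauchy--Schwarz inequality. The paper phrases this as the projection of $\A_h$ onto all of ${\cal A}$ collapsing to $\frac{\overline{\delta}_h}{p_h(\lambda_0)}p_h(\A)$ and uses Pythagoras for the equality case, whereas you project onto the span of $p_h(\A)$ alone and fix the scalar via Lemma \ref{lema3.1}; these are the same argument in slightly different packaging.
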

\begin{proof}
The orthogonal projection of $\A_h$ onto ${\cal A}$ is
\begin{eqnarray*}
\widetilde{\A_h} & = &
\sum_{i=0}^d\frac{\langle\A_h,p_i(\A)\rangle}
{\|p_i(\A)\|^2}p_i(\A)= \frac{\langle\A_h,p_h(\A)\rangle}
{\|p_h(\A)\|^2}p_h(\A)= \frac{\langle\A_h,H(\A)\rangle}
{\|p_h(\A)\|^2}p_h(\A) \\
 & = & \frac{\langle\A_h,\J\rangle}
{p_h(\lambda_0)}p_h(\A)
 =  \frac{\langle \A_h,\A_h\rangle}
{p_h(\lambda_0)}p_h(\A) = \frac{\overline{\delta}_h}{p_h(\lambda_0)}p_h(\A).
\end{eqnarray*}
We have $\|\A_h\|^2=\overline{\delta}_h$ and $\displaystyle
\|\widetilde{\A_h}\|^2=
\frac{\overline{\delta}_h^2}{p_h(\lambda_0)}$. From
$\|\widetilde{\A_h}\|\leq\|\A_h\|$, we obtain
$\overline{\delta}_h\leq p_h(\lambda_0)$. From Pythagoras's
theorem, equality gives $\A_h=\widetilde{\A_h}=p_h(\A)$.
\end{proof}

By projection onto ${\cal D}$ we obtain the following `dual'
result.
\begin{propo} \label{prop escalar2 hdr }
Let $h \le D$ and let $\G$ be such that $ \langle
p_h(\A),\A_i\rangle=0$ for $i=0,\dots,h-1$. Then
$\overline{\delta}_h\geq p_h(\lambda_0)$ with equality if and
only if $\G$ is $h$-punctually distance-regular.
\end{propo}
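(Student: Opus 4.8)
The plan is to run the argument of Proposition \ref{prop escalar1 hdr } in the ``dual'' direction: instead of projecting $\A_h$ onto ${\cal A}$, I would project $p_h(\A)$ onto the distance $\circ$-algebra ${\cal D}$. The matrices $\A_0,\A_1,\dots,\A_D$ form an \emph{orthogonal} basis of ${\cal D}$, since their supports are disjoint (so $\A_i\circ\A_j=\O$ for $i\neq j$), and $\|\A_i\|^2=\frac1n\som(\A_i)=\overline{\delta}_i$. Hence the orthogonal projection of $p_h(\A)$ onto ${\cal D}$ is
$$
\widehat{p_h(\A)}=\sum_{i=0}^D\frac{\langle p_h(\A),\A_i\rangle}{\overline{\delta}_i}\A_i .
$$
First I would show that only the term $i=h$ survives. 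The terms with $i<h$ vanish by hypothesis, while those with $i>h$ vanish automatically: as $p_h$ has degree $h$, the matrix $p_h(\A)$ is a combination of $\I,\A,\dots,\A^h$, so $(p_h(\A))_{uv}=0$ whenever $\partial(u,v)=i>h$, giving $\langle p_h(\A),\A_i\rangle=0$.

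Next I would evaluate the surviving coefficient. Since every term but the $h$-th vanishes,
$$
\langle p_h(\A),\A_h\rangle=\sum_{i=0}^D\langle p_h(\A),\A_i\rangle=\langle p_h(\A),\J\rangle=\langle p_h,H\rangle=p_h(\lambda_0),
$$
where $H=\sum_i p_i$ is the Hoffman polynomial of (\ref{polHof}) and the last equality uses the orthogonality $\langle p_h,p_i\rangle=\delta_{hi}p_h(\lambda_0)$. Thus $\widehat{p_h(\A)}=\frac{p_h(\lambda_0)}{\overline{\delta}_h}\A_h$, whence $\|\widehat{p_h(\A)}\|^2=\frac{p_h(\lambda_0)^2}{\overline{\delta}_h}$, while $\|p_h(\A)\|^2=p_h(\lambda_0)$. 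Because a projection never increases the norm, $\frac{p_h(\lambda_0)^2}{\overline{\delta}_h}\le p_h(\lambda_0)$, and dividing by $p_h(\lambda_0)=\|p_h\|^2>0$ gives the claimed bound $\overline{\delta}_h\ge p_h(\lambda_0)$.

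For equality, Pythagoras's theorem tells us that $\|\widehat{p_h(\A)}\|=\|p_h(\A)\|$ holds precisely when $p_h(\A)\in{\cal D}$; by Lemma \ref{lem D to A} this is equivalent to $\G$ being $h$-punctually distance-regular (and then $\A_h=p_h(\A)$, so $\overline{\delta}_h=\|\A_h\|^2=p_h(\lambda_0)$, confirming equality). The one point to watch is the \emph{direction} of the inequality: unlike the primal statement, here $\overline{\delta}_h$ appears in the \emph{denominator} of the projection coefficient, so the usual ``projection shrinks the norm'' estimate produces a lower bound on $\overline{\delta}_h$ rather than an upper bound. This sign flip is the only genuinely non-routine feature; everything else is a direct transcription of the proof of Proposition \ref{prop escalar1 hdr }.
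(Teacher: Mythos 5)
Your proposal is correct and follows essentially the same route as the paper's own proof: project $p_h(\A)$ onto ${\cal D}$, kill the terms $i>h$ by the degree of $p_h$ and the terms $i<h$ by hypothesis, evaluate the surviving coefficient via $\J=H(\A)$ and the orthogonality of the predistance polynomials, and conclude with the norm inequality and Pythagoras. The only cosmetic difference is that you invoke Lemma \ref{lem D to A} for the equality case, where the paper notes directly that equality forces $\A_h=\widehat{p_h(\A)}=p_h(\A)$; both are valid.
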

\begin{proof}
We now consider the orthogonal projection $\widehat{p_h(\A)}$
of $p_h(\A)$ onto ${\cal D}$:
\begin{eqnarray*}
\widehat{p_h(\A)} & = & \sum_{i=0}^D\frac{\langle
p_h(\A),\A_i\rangle} {\|\A_i\|^2}\A_i= \sum_{i=0}^{h}\frac{\langle
p_h(\A),\A_i\rangle} {\|\A_i\|^2}\A_i= \frac{\langle
p_h(\A),\A_h\rangle} {\|\A_h\|^2}\A_h \\
 & = & \frac{\langle
p_h(\A),\J\rangle} {\overline{\delta}_h}\A_h
 =  \frac{\langle p_h(\A),p_h(\A)\rangle}
{\overline{\delta}_h}\A_h = \frac{p_h(\lambda_0)}{\overline{\delta}_h}\A_h.
\end{eqnarray*}
From this we now obtain that
$\frac{(p_h(\lambda_0))^2}{\overline{\delta}_h}=
\|\widehat{p_h(\A)}\|^2\leq\|p_h(\A)\|^2 =p_h(\lambda_0)$, and
hence that $\overline{\delta}_h\geq p_h(\lambda_0)$. Moreover,
equality gives $\A_h=\widehat{p_h(\A)}= p_h(\A)$.
\end{proof}

From the latter two propositions, we obtain the following
result.
\begin{coro}
\label{prop que maco 12} Let $h \le D$. Then $\G$ is
$h$-punctually distance-regular if and only if \linebreak$
\langle p_h(\A),\A_i\rangle=0$ for $i=0,\dots,h-1$ and $\langle
p_i(\A),\A_h\rangle=0$ for $i=h+1,\dots,d$.
\end{coro}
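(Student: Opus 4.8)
The plan is to derive the corollary directly from Propositions \ref{prop escalar1 hdr } and \ref{prop escalar2 hdr }, since their hypotheses are \emph{precisely} the two orthogonality conditions appearing in the statement, so that the corollary amounts to sandwiching $\overline{\delta}_h$ between the two matching bounds.

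For the forward implication I would assume that $\G$ is $h$-punctually distance-regular, so that $\A_h=p_h(\A)$ by Lemma \ref{lema3.1}, and then verify the two conditions by slightly different (but elementary) means. To check that $\langle p_h(\A),\A_i\rangle=0$ for $i=0,\dots,h-1$, I would rewrite this as $\langle \A_h,\A_i\rangle=\frac1n\som(\A_h\circ\A_i)$ and note that the distance matrices $\A_h$ and $\A_i$ have disjoint support whenever $i\neq h$, so the Hadamard product vanishes. To check that $\langle p_i(\A),\A_h\rangle=0$ for $i=h+1,\dots,d$, I would instead use $\A_h=p_h(\A)$ to write $\langle p_i(\A),\A_h\rangle=\langle p_i,p_h\rangle$, which equals $0$ for $i\neq h$ by the orthogonality of the predistance polynomials.

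For the reverse implication I would assume that both orthogonality conditions hold. The condition $\langle p_i(\A),\A_h\rangle=0$ for $i=h+1,\dots,d$ is exactly the hypothesis of Proposition \ref{prop escalar1 hdr }, which yields $\overline{\delta}_h\le p_h(\lambda_0)$; dually, the condition $\langle p_h(\A),\A_i\rangle=0$ for $i=0,\dots,h-1$ is the hypothesis of Proposition \ref{prop escalar2 hdr }, which yields $\overline{\delta}_h\ge p_h(\lambda_0)$. Combining the two inequalities forces $\overline{\delta}_h=p_h(\lambda_0)$, i.e.\ equality holds in both propositions, and by the equality characterization of either proposition this occurs if and only if $\G$ is $h$-punctually distance-regular.

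Since the analytic content is entirely contained in the two previously established propositions, I do not expect a genuine obstacle here. The only point requiring a little care is the forward direction, where the two orthogonality conditions must be handled separately—disjoint support of the distance matrices for the first, and orthogonality of the predistance polynomials for the second—rather than by a single uniform computation.
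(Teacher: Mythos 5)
Your proposal is correct and matches the paper's own route: the paper presents this statement as an immediate consequence of Propositions \ref{prop escalar1 hdr } and \ref{prop escalar2 hdr }, with the reverse direction obtained by sandwiching $\overline{\delta}_h$ between the two bounds exactly as you do, and the forward direction being the elementary verification (disjoint support of distance matrices plus orthogonality of the predistance polynomials) that you spell out. No gaps.
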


\section{Graphs with spectrally maximum diameter}\label{specmaxdiam}

In this section we focus on the important case of graphs with
spectrally maximum diameter $D=d$. Distance-regular graphs are
examples of such graphs. In this context, we first recall the
following characterizations of distance-regularity. We include
a new proof for completeness.
\begin{propo}
[Folklore] The following statements are equivalent:
\begin{itemize}
\item[{\em(i)}] $\G$ is distance-regular,
\item[{\em(ii)}] ${\cal D}$ is an algebra with the ordinary
    product,
\item[{\em(iii)}] ${\cal A}$ is an algebra with the
    Hadamard product,
\item[{\em(iv)}] ${\cal A}={\cal D}$.
\end{itemize}
\end{propo}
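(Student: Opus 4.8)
The plan is to route all four properties through the equality $\calA={\cal D}$ of statement (iv); recall that throughout this section the diameter is spectrally maximum, $D=d$. The equivalence (i)$\Leftrightarrow$(iv) is essentially the remark preceding this section, but I would reprove it: if $\G$ is distance-regular then $\A_i=p_i(\A)$ with $\deg p_i=i$ for all $i\le D=d$, so ${\cal D}\subseteq\calA$ and, the dimensions both being $d+1$, $\calA={\cal D}$; conversely $\calA={\cal D}$ makes every $\A_i$ a polynomial in $\A$, so $\G$ is $i$-punctually distance-polynomial, and Lemma \ref{lema3.1} together with $D=d$ turns this into $i$-punctual distance-regularity for all $i$, i.e. distance-regularity (cf. Corollary \ref{D-punc->drg}).

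For the two implications out of (iv), note that when $\calA={\cal D}$ this common space carries both products: as ${\cal D}$ it is closed under the Hadamard product and as $\calA$ it is closed under the ordinary product, which gives (iii) and (ii) respectively. The converse (ii)$\Rightarrow$(iv) is short: since ${\cal D}$ contains $\A=\A_1$ and is closed under the ordinary product, it contains every power $\A^k$, hence $\calA\subseteq{\cal D}$; comparing dimensions gives $d+1\le D+1$, and as $D\le d$ always holds, we conclude $\calA={\cal D}$.

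The main obstacle is (iii)$\Rightarrow$(iv). Here I would exploit that $\calA$ contains the Hadamard unit $\J=H(\A)$, so a Hadamard-closed $\calA$ is a commutative algebra of functions on $V\times V$ and splits as $\calA=\span\{\M_0,\dots,\M_d\}$ for $(0,1)$-matrices with $\M_s\circ\M_t=\delta_{st}\M_s$ and $\sum_t\M_t=\J$, namely the Schur idempotents indexing a partition of $V\times V$ into exactly $\dim\calA=d+1$ blocks. Every element of $\calA$, in particular every $\A^k$, is constant on each block; since the entries of $\A^k$ are nonnegative we have $\partial(u,v)=\min\{k:(\A^k)_{uv}\neq0\}$, so positions in one block are equidistant. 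Thus the block partition refines the distance partition, each $\A_i$ is a sum of blocks and hence lies in $\calA$, giving ${\cal D}\subseteq\calA$; with $D=d$ the equal dimensions force $\calA={\cal D}$.

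What I would stress is that this last step really uses $D=d$: Hadamard-closure of $\calA$ by itself only yields ${\cal D}\subseteq\calA$, i.e. that $\G$ is distance-polynomial, which for $D<d$ is strictly weaker than distance-regularity (for instance the circulant on $\Z_7$ with connection set $\{\pm1,\pm2\}$ has $D=2<d=3$, its adjacency algebra equals a cyclotomic Bose-Mesner algebra and is therefore Hadamard-closed, yet the graph is not distance-regular). The dimension count furnished by spectrally maximum diameter is exactly what completes the implication, and the extraction of the $(0,1)$ Schur idempotents is the single structural ingredient that goes beyond routine manipulation.
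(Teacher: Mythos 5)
Your proof is correct, and on the crucial implication (iii) $\Rightarrow$ (iv) it takes a genuinely different route from the paper's. The paper proves the opposite inclusion, ${\cal A}\subseteq{\cal D}$: for each principal idempotent $\E_i$ and each $j$, the matrix $\E_i\circ\A^j$ lies in ${\cal A}$ and hence equals $q_{ji}(\A)$ with $\deg q_{ji}\le j$ (this degree bound is exactly where $D=d$ enters, via the last assertion of Lemma \ref{lema3.1}); comparing entries of $\E_i\circ\A^j$ and $q_{ji}(\A)$ at pairs at distance exactly $j$ shows that $(\E_i)_{uv}$ is constant on each distance class, so $\E_i\in{\cal D}$, and the dimension count $\dim{\cal A}=d+1\ge D+1=\dim{\cal D}$ finishes. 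You instead prove ${\cal D}\subseteq{\cal A}$ by extracting the $(0,1)$ Schur-idempotent basis of the commutative, Hadamard-closed, unital (since $\J=H(\A)\in{\cal A}$) algebra ${\cal A}$, and observing that the resulting partition of $V\times V$ refines the distance partition because $\partial(u,v)$ is read off from the vanishing pattern of the powers $\A^k$; for you, $D=d$ enters only at the very end, in the dimension count. The paper's argument is more elementary and self-contained, whereas you import a standard piece of Bose--Mesner machinery (which you state rather than prove, but it is indeed folklore); in exchange, your version isolates precisely where spectrally maximum diameter is indispensable, and your counterexample is valid: the circulant on $\Z_7$ with connection set $\{\pm 1,\pm 2\}$ has adjacency algebra equal to the $4$-dimensional algebra of symmetric circulants (the Bose--Mesner algebra of the cyclotomic scheme with classes $\{0\},\{\pm 1\},\{\pm 2\},\{\pm 3\}$), which is Hadamard-closed, yet $D=2<d=3$ and the graph is distance-polynomial but not distance-regular, so (iii) alone does not imply (iv) for general graphs. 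The remaining implications match the paper: (ii) $\Rightarrow$ (iv) via $\A^k\in{\cal D}$ plus dimensions is identical, (iv) $\Rightarrow$ (ii),(iii) is trivial in both, and your re-derivation of (i) $\Leftrightarrow$ (iv) from Lemma \ref{lema3.1} and Corollary \ref{D-punc->drg} is just a proof of the observation from Section \ref{subsec_alg} that the paper cites.
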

\begin{proof}
We already observed in Section \ref{subsec_alg} that (i) and
(iv) are equivalent, and that these imply (ii) and (iii). So we
only need to prove that both (ii) and (iii) imply (iv).\\
  (ii) $\Rightarrow$ (iv): As $\A=\A_1\in{\cal D}$,
we have that $\A^k\in{\cal D}$ for any $k \ge 0$. Thus, ${\cal
A}\subset{\cal D}$ and, since $\dim {\cal A}=d+1\geq
D+1=\dim {\cal D}$, we get ${\cal A}={\cal D}$.\\
  (iii) $\Rightarrow$ (iv): As $\E_i \circ \A^j \in {\cal
  A}$, we have that $\E_i \circ \A^j =q_{ji}(\A)$ for some polynomial $q_{ji}$,
  and this polynomial clearly has degree at most $j$.
  Let $\psi_{ji}$ be the coefficient of $x^j$ in $q_{ji}$, then
  it follows that
  $(\E_i)_{uv}(\A^j)_{uv}=\psi_{ji}(\A^j)_{uv}$ for vertices $u,v$
  at distance $j$, and hence that $(\E_i)_{uv}=\psi_{ji}$. It
  thus follows that $\E_i=\sum_{j} \psi_{ji}\A_j \in {\cal D}$.
  Therefore ${\cal A}\subset{\cal D}$ and, as before, we obtain ${\cal A}={\cal D}$.
\end{proof}

\subsection{Partially distance-regular graphs}

We already observed in Section \ref{sec:pdr} that if a graph
with $D=d$ is $h$-punctually distance-polynomial, then it is
$h$-punctually distance-regular. The following, which is a bit
stronger, is an immediate consequence of Lemmas \ref{lema3.1}
and \ref{lem D to A}.

\begin{coro}
\label{lem D simetria A} Let $h\le D$ and let $\G$ have
spectrally maximum diameter $D=d$. Then $\A_h\in{\cal A}$ if
and only if $p_h(\A)\in{\cal D}$, in which case $\A_h=p_h(\A)$.
\end{coro}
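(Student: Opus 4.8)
The statement to prove is Corollary~\ref{lem D simetria A}: for a graph with $D=d$ and $h\le D$, we have $\A_h\in{\cal A}$ if and only if $p_h(\A)\in{\cal D}$, and in either case $\A_h=p_h(\A)$.

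The plan is to exploit the two lemmas already established, Lemma~\ref{lema3.1} and Lemma~\ref{lem D to A}, and simply verify that under the hypothesis $D=d$ they combine into the claimed equivalence. First I would treat the forward direction: assume $\A_h\in{\cal A}$, so that $\A_h=q_h(\A)$ for some polynomial $q_h\in\R_d[x]$, and $\G$ is by definition $h$-punctually distance-polynomial. By Lemma~\ref{lema3.1}, either $\deg q_h=h$ (so that $q_h=p_h$ and $\G$ is $h$-punctually distance-regular) or $\deg q_h>h$, in which case the lemma forces $\deg q_h>D$. But here $D=d$ and $q_h\in\R_d[x]$ has degree at most $d=D$, so the second alternative is impossible. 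Hence $\deg q_h=h$ and $\A_h=p_h(\A)$; in particular $p_h(\A)=\A_h\in{\cal D}$.

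For the converse I would assume $p_h(\A)\in{\cal D}$ and invoke Lemma~\ref{lem D to A} directly, which (for $h\le d=D$) gives that $h\le D$ and that $\G$ is $h$-punctually distance-regular, meaning precisely $\A_h=p_h(\A)$; in particular $\A_h\in{\cal A}$. Thus each side of the equivalence yields $\A_h=p_h(\A)$, which establishes both the equivalence and the final ``in which case'' clause simultaneously.

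The only point requiring care is that the two lemmas be applicable as stated. Lemma~\ref{lem D to A} is phrased for $h\le d$ and concludes $h\le D$; since here $D=d$, the hypothesis $h\le D$ is exactly $h\le d$, so there is no gap. The single genuinely new observation, and the place where the hypothesis $D=d$ does all the work, is the step in the forward direction where the dichotomy ``$\deg q_h=h$ or $\deg q_h>D$'' of Lemma~\ref{lema3.1} is collapsed: since $q_h\in\R_d[x]$ cannot have degree exceeding $d=D$, the case $\deg q_h>h$ is ruled out. This is not a difficult obstacle so much as the essential point to articulate clearly, and the remainder is a routine assembly of the prior lemmas.
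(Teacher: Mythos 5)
Your proof is correct and follows exactly the route the paper intends: the paper presents this corollary as an immediate consequence of Lemmas~\ref{lema3.1} and \ref{lem D to A}, and your argument is precisely that assembly, with the key observation (that $\deg q_h>D$ is impossible when $q_h\in\R_d[x]$ and $D=d$) spelled out explicitly.
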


It is also clear that if a graph with $D=d$ is $m$-partially
distance-polynomial, then it is $m$-partially distance-regular.
If we let ${\cal A}_{m} = \span \{\I, \A, \A^2, \ldots,
\A^{m}\}$ and ${\cal D}_{m} = \span
\{\I,\A,\A_2,\ldots,\A_m\}$, then we obtain the following by
extending the previous corollary.

\begin{coro}
Let $m \leq D$ and let $\G$ have spectrally maximum diameter $D=d$.
Then the following statements are equivalent: $\G$ is $m$-partially
distance-regular, ${\cal D}_{m} \subset {\cal A}$, ${\cal A}_{m}
\subset {\cal D}$, and ${\cal A}_{m} ={\cal D}_{m}.$
\end{coro}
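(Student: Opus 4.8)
The plan is to show that each of the three displayed conditions is equivalent to $m$-partial distance-regularity, which by definition (together with Lemma~\ref{lema3.1}) means $\A_h=p_h(\A)$ with $\deg p_h=h$ for every $h\le m$. The backbone is a dimension count. First I would record that both ${\cal A}_m$ and ${\cal D}_m$ have dimension exactly $m+1$: the powers $\I,\A,\dots,\A^m$ are linearly independent because the minimal polynomial of $\A$ has degree $d+1>m$ (as $m\le D=d$), and $\A_0,\A_1,\dots,\A_m$ are linearly independent because the distance matrices have pairwise disjoint supports, each of which is nonempty since $m\le D$. I would also note that, as $\{p_0,\dots,p_m\}$ is a basis of $\R_m[x]$, one has $\span\{p_0(\A),\dots,p_m(\A)\}={\cal A}_m$.

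For the forward direction, assume $\G$ is $m$-partially distance-regular, so $\A_h=p_h(\A)$ with $\deg p_h=h\le m$ for each $h\le m$. Then every $\A_h$ lies in ${\cal A}_m$, whence ${\cal D}_m\subseteq{\cal A}_m$; since both spaces have dimension $m+1$, this forces ${\cal A}_m={\cal D}_m$. The same identities give ${\cal D}_m\subseteq{\cal A}$ immediately, and ${\cal A}_m={\cal D}_m\subseteq{\cal D}$. Thus $m$-partial distance-regularity yields all three conditions at once.

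For the converse directions the essential point is to pass from mere membership back to full punctual distance-regularity, and this is precisely where the hypothesis $D=d$ is used. If ${\cal D}_m\subset{\cal A}$ (or ${\cal A}_m={\cal D}_m$), then $\A_h\in{\cal A}$ for every $h\le m$, say $\A_h=q_h(\A)$; by Lemma~\ref{lema3.1} we must have $\deg q_h=h$, since the alternative $\deg q_h>D=d$ is impossible when $\deg q_h\le m\le d$. Hence $\A_h=p_h(\A)$ and $\G$ is $m$-partially distance-regular. If instead ${\cal A}_m\subset{\cal D}$, then in particular $p_h(\A)\in{\cal A}_m\subseteq{\cal D}$ for each $h\le m$, and Lemma~\ref{lem D to A} gives directly that $\G$ is $h$-punctually distance-regular for all such $h$, i.e. $m$-partially distance-regular.

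I expect the only genuine subtlety — and the step to watch — to be this membership-to-regularity upgrade: a priori ${\cal D}_m\subset{\cal A}$ only certifies that $\G$ is $m$-partially distance-\emph{polynomial}, and it is the spectrally-maximum-diameter assumption, channelled through Lemma~\ref{lema3.1} (equivalently Corollary~\ref{lem D simetria A}) and Lemma~\ref{lem D to A}, that turns this into $m$-partial distance-\emph{regularity}. Everything else is bookkeeping: the two dimension counts, and the observation that $\{p_0(\A),\dots,p_m(\A)\}$ and $\{\A_0,\dots,\A_m\}$ span the same subspace once the matrices $\A_h$ and $p_h(\A)$ are seen to coincide.
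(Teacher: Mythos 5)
Your proof is correct and follows essentially the same route as the paper: the corollary is stated there as an immediate extension of Corollary~\ref{lem D simetria A}, i.e.\ an application of Lemma~\ref{lema3.1} (to upgrade $\A_h\in{\cal A}$ to $\A_h=p_h(\A)$ when $D=d$) and Lemma~\ref{lem D to A} (to handle $p_h(\A)\in{\cal D}$) for each $h\le m$, exactly as you do. One cosmetic point: in the ${\cal D}_m\subset{\cal A}$ case the representing polynomial $q_h$ is only guaranteed to lie in $\R_d[x]$, not $\R_m[x]$, but this is all that is needed, since Lemma~\ref{lema3.1} then rules out $\deg q_h>h$ because that would force $\deg q_h>D=d$.
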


\subsection{Punctually walk-regular graphs}

Graphs with spectrally maximum diameter turn out to be
$d$-punctually walk-regular. This will be used in the next
section to show the relation of Propositions \ref{propo
h-punctual d-p} and \ref{pdr2cond} to the spectral excess
theorem.
\begin{propo}
\label{G is d-psr} Let $\G$ have spectrally maximum diameter
$D=d$. Then it is both $d$-punctually walk-regular and
$d$-punctually spectrum-regular with parameters
$$
a_d^{(d)}=\frac{\pi_0}{n}=\gamma_1\gamma_2\cdots \gamma_d,\qquad m_{di}=(-1)^i\frac
{\pi_0}{n\pi_i}\qquad (i=0,\dots, d).
$$
If $\G$ is bipartite, then it is both $(d-1)$-punctually
walk-regular and $(d-1)$-punctually spectrum-regular with
parameters
$$
a_{d-1}^{(d-1)}=\frac{\pi_0}{n\delta}=\gamma_1\gamma_2\cdots
\gamma_{d-1},\qquad m_{d-1,i}=(-1)^i\frac
{\pi_0}{n\pi_i} \frac{\lambda_i}{\delta} \qquad (i=0,\dots, d).
$$
\end{propo}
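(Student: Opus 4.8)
The plan is to get the punctual walk-regularity almost for free from Lemma~\ref{cheking m-wr}, and then to read off the two parameters by evaluating the Hoffman polynomial identity and the Lagrange interpolators $\lambda_i^*$ at a single entry. For the $d$-punctual walk-regularity I would apply Lemma~\ref{cheking m-wr} with $h=d=D$. Its hypothesis requires that, for each $\ell\le d-1$, the number of walks of length $\ell$ between vertices $u,v$ with $\dist(u,v)=d$ be independent of $u,v$; but when $\dist(u,v)=d$ there are no walks of length $\ell<d$ at all, so each such count is the constant $0$ and the hypothesis holds trivially. Hence $\G$ is $d$-punctually walk-regular, and by Lemma~\ref{pwr=psr} also $d$-punctually spectrum-regular.

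To compute $a_d^{(d)}$ I would evaluate the identity (\ref{cH(A)=J}), i.e.\ $\frac{\pi_0}{n}H(\A)=\A^d+\eta_{d-1}\A^{d-1}+\cdots+\eta_0\I=\frac{\pi_0}{n}\J$, at an entry $uv$ with $\dist(u,v)=d$. Every $(\A^\ell)_{uv}$ with $\ell<d$ vanishes, so only $(\A^d)_{uv}=a_d^{(d)}$ survives and $a_d^{(d)}=\frac{\pi_0}{n}$. The equality $\frac{\pi_0}{n}=\gamma_1\gamma_2\cdots\gamma_d$ then follows by combining $\omega_d=\frac{n}{\pi_0}$ (noted after (\ref{polHof})) with $\omega_d^{-1}=\gamma_1\gamma_2\cdots\gamma_d$ from (\ref{omega_k}). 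For the multiplicity I would use $m_{di}=(\E_i)_{uv}=(\lambda_i^*(\A))_{uv}$; since $\lambda_i^*$ has degree $d$ with leading coefficient $\frac{(-1)^i}{\pi_i}$ and all lower powers of $\A$ contribute $0$ at this entry, only the leading term survives, giving $m_{di}=\frac{(-1)^i}{\pi_i}\,a_d^{(d)}=(-1)^i\frac{\pi_0}{n\pi_i}$.

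The bipartite case runs in parallel with $h=d-1$. Here I would invoke the bipartite half of Lemma~\ref{cheking m-wr}: for $\ell\le d-2<d-1$ the walk counts between vertices at distance $d-1$ are again all $0$, so $\G$ is $(d-1)$-punctually walk-regular and spectrum-regular. Evaluating (\ref{cH(A)=J}) at an entry $uv$ with $\dist(u,v)=d-1$, I would now use two parity/symmetry facts special to the bipartite situation: $(\A^\ell)_{uv}=0$ for $\ell<d-1$ and, because walk lengths between $u$ and $v$ all share the parity of $d-1$, also $(\A^d)_{uv}=0$; moreover $\eta_{d-1}=\delta$ (which uses $\sum_j\lambda_j=0$, as in the proof of Lemma~\ref{cheking m-wr}). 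Thus $\delta\,a_{d-1}^{(d-1)}=\frac{\pi_0}{n}$, so $a_{d-1}^{(d-1)}=\frac{\pi_0}{n\delta}=\gamma_1\cdots\gamma_{d-1}$, the last equality from $\gamma_d=\delta$. For $m_{d-1,i}=(\lambda_i^*(\A))_{uv}$ the relevant quantity is no longer the leading coefficient of $\lambda_i^*$ but its coefficient of $x^{d-1}$, since $(\A^d)_{uv}=0$; a short computation shows this coefficient equals $\frac{(-1)^i}{\pi_i}\lambda_i$ (using $\sum_{j\ne i}\lambda_j=-\lambda_i$), whence $m_{d-1,i}=\frac{(-1)^i}{\pi_i}\lambda_i\,a_{d-1}^{(d-1)}=(-1)^i\frac{\pi_0}{n\pi_i}\frac{\lambda_i}{\delta}$.

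The conceptual point that distance-$d$ pairs (and, in the bipartite case, distance-$(d-1)$ pairs) admit no short walks makes the punctual walk-regularity immediate, so essentially all the care is concentrated in the bipartite parameter computation. I expect the one genuinely non-automatic step to be the extraction of the subleading coefficient of $\lambda_i^*$, which must be paired with the parity argument that kills $(\A^d)_{uv}$; everything else is a direct substitution into identities already available in the preliminaries.
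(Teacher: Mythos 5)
Your proof is correct, and its first half (the non-bipartite case) is essentially the paper's own argument: Lemma~\ref{cheking m-wr} applied with $h=d$ (vacuously, since no walks of length $\ell<d$ exist between vertices at distance $d$), Lemma~\ref{pwr=psr} for spectrum-regularity, the Hoffman identity (\ref{cH(A)=J}) evaluated at a distance-$d$ entry for $a_d^{(d)}=\pi_0/n$, and the leading coefficient of $\lambda_i^*$ for $m_{di}$. Where you genuinely diverge is in the bipartite parameter computation. The paper transfers the $h=d$ parameters down to $h=d-1$: it notes $a_d^{(d)}=\delta\, a_{d-1}^{(d-1)}$ (every neighbour $w$ of $u$ lies at distance $d-1$ from $v$ when $\partial(u,v)=d=D$ and the graph is bipartite), and obtains $m_{d-1,i}$ from the eigen-equation $\lambda_i m_{di}=(\A\E_i)_{uv}=\delta\, m_{d-1,i}$. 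You instead compute everything directly at a distance-$(d-1)$ pair: the bipartite parity kills $(\A^d)_{uv}$, the coefficient $\eta_{d-1}=\delta$ of the Hoffman identity isolates $a_{d-1}^{(d-1)}$, and the subleading coefficient $\frac{(-1)^i}{\pi_i}\lambda_i$ of $\lambda_i^*$ (using $\sum_{j\ne i}\lambda_j=-\lambda_i$, valid by the bipartite symmetry of the distinct eigenvalues) yields $m_{d-1,i}$. Both routes are sound and of comparable length; the paper's transfer argument is slightly more economical because it reuses the already-computed $h=d$ parameters and avoids coefficient extraction, while your direct evaluation is self-contained at level $d-1$ and makes explicit exactly which algebraic facts about bipartite spectra are being used.
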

\begin{proof}
It follows from Lemma \ref{cheking m-wr} and its proof that $\G$
is $d$-punctually walk-regular with
$a_d^{(d)}=\frac{\pi_0}{n}$. The latter equals
$\gamma_1\gamma_2\cdots \gamma_d$ by (\ref{omega_k}) and
(\ref{polHof}). Then by Lemma \ref{pwr=psr}, $\G$ is also
$d$-punctually spectrum-regular. Now observe that if $u,v$ are
vertices at distance $d$, then
$m_{di}=(\E_i)_{uv}=\lambda_i^*(\A)_{uv}=\frac{(-1)^i}{\pi_i}a_d^{(d)}=
(-1)^i\frac{\pi_0}{n\pi_i}$.

If $\G$ is bipartite, then it follows from Lemmas \ref{cheking
m-wr} and \ref{pwr=psr} that $\G$ is $(d-1)$-punctually
walk-regular and $(d-1)$-punctually spectrum-regular. Moreover,
it is clear that $a_d^{(d)}=\delta a_{d-1}^{(d-1)}$, hence
$a_{d-1}^{(d-1)}=\frac{\pi_0}{n\delta}=\gamma_1\gamma_2\cdots
\gamma_{d-1}$ (because $\gamma_d=\delta$ for a bipartite
graph). If $u,v$ are vertices at distance $d$, then $\lambda_i
m_{di}= (\lambda_i \E_i)_{uv}=(\A \E_i)_{uv}= \sum_{w \in
\Gamma_1(u) \cap \Gamma_{d-1}(v)} (\E_i)_{wv} = \delta
m_{d-1,i}$, hence $m_{d-1,i}=(-1)^i\frac {\pi_0}{n\pi_i}
\frac{\lambda_i}{\delta}$.
\end{proof}

An example of an almost distance-regular graph that illustrates
this proposition is the earlier mentioned graph F026A. It is
bipartite with $D=d=5$, hence it is $h$-punctually walk-regular
for $h=4,5$. Moreover, this graph is $2$-arc transitive, hence
it is also $2$-walk-regular ($h$-punctually walk-regular for
$h=0,1,2$). The intersection number $c_3$ is not well-defined
however, so the number of walks of length $3$ between vertices
at distance $3$ is not constant either, and therefore the graph
is not $3$-punctually walk-regular.

\subsection{From punctual to whole distance-regularity}

We already observed that Proposition \ref{prop escalar1 hdr }
and Corollary \ref{D-punc->drg} together imply the spectral
excess theorem. Proposition \ref{G is d-psr} shows that
$\omega_d a^{(d)}_d=1$, hence also Proposition \ref{pdr2cond}
implies the spectral excess theorem (again, with Corollary
\ref{D-punc->drg}). Finally, we will also show the connection
of Proposition \ref{propo h-punctual d-p} to this theorem. To
do this, we first restrict it to $h$-punctually
spectrum-regular graphs with spectrally maximum diameter.
\begin{propo}
\label{propo(b) h-punctual d-r} Let $h \le D$ and let $\G$ be
$h$-punctually spectrum-regular with spectrally maximum
diameter $D=d$. Then
$$
\overline{\delta}_h \le \frac 1n\left(\sum_{i=0}^d\frac{m_{hi}^2}{m_i}\right)^{-1}
$$
with equality if and only if $\G$ is $h$-punctually
distance-regular, in which case the crossed local
multiplicities are
$m_{hi}=\frac{p_h(\lambda_i)}{p_h(\lambda_0)}\frac{m_i}{n}$, $
i=0,\dots, d$.
\end{propo}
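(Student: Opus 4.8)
The plan is to obtain this proposition as an immediate specialization of Proposition \ref{propo h-punctual d-p}, whose general bound is phrased in terms of the \emph{average} crossed local multiplicities $\overline{m}_{hi}$. The one substantive input needed is that $h$-punctual spectrum-regularity collapses these averages to genuine constants, and the one place where the hypothesis $D=d$ enters is in upgrading ``distance-polynomial'' to ``distance-regular'' in the equality case.

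First I would observe that if $\G$ is $h$-punctually spectrum-regular, then by definition $m_{uv}(\lambda_i)=(\E_i)_{uv}=m_{hi}$ for all vertices $u,v$ with $\partial(u,v)=h$ and all $i=0,1,\dots,d$. Since the number of ordered pairs at distance $h$ equals $\som(\A_h)=n\overline{\delta}_h$, plugging the constant value $m_{hi}$ into the definition $\overline{m}_{hi}=\frac{1}{n\overline{\delta}_h}\sum_{\partial(u,v)=h} m_{uv}(\lambda_i)$ gives $\overline{m}_{hi}=m_{hi}$. Substituting this identity into the inequality $\overline{\delta}_h \le \frac{1}{n}\bigl(\sum_{i=0}^d \overline{m}_{hi}^2/m_i\bigr)^{-1}$ of Proposition \ref{propo h-punctual d-p} yields the claimed bound verbatim.

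Next I would treat the equality case. Proposition \ref{propo h-punctual d-p} states that equality holds precisely when $\G$ is $h$-punctually distance-polynomial, i.e.\ $\A_h=q_h(\A)$ for some $q_h$. Here the extra hypothesis $D=d$ is used exactly once: as noted immediately after Lemma \ref{lema3.1} and recorded in Corollary \ref{lem D simetria A}, for a graph with spectrally maximum diameter the notions of $h$-punctual distance-polynomiality and $h$-punctual distance-regularity coincide, so $\deg q_h=h$ and hence $q_h=p_h$ by Lemma \ref{lema3.1}. The formula $\overline{m}_{hi}=\frac{q_h(\lambda_i)}{q_h(\lambda_0)}\frac{m_i}{n}$ provided by Proposition \ref{propo h-punctual d-p}, combined with $\overline{m}_{hi}=m_{hi}$ and $q_h=p_h$, then gives $m_{hi}=\frac{p_h(\lambda_i)}{p_h(\lambda_0)}\frac{m_i}{n}$, as required.

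Because every step is either a direct substitution or an invocation of an already-proved equivalence, I do not expect any real obstacle. The only things to watch are the bookkeeping of the normalizing factor $n\overline{\delta}_h$ when verifying $\overline{m}_{hi}=m_{hi}$, and invoking the $D=d$ hypothesis at precisely the point where ``distance-polynomial'' must be strengthened to ``distance-regular'' in order to conclude $q_h=p_h$.
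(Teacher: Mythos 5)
Your proposal is correct and takes exactly the paper's approach: the paper states this proposition without a separate proof, presenting it precisely as the restriction of Proposition \ref{propo h-punctual d-p} to $h$-punctually spectrum-regular graphs with spectrally maximum diameter, which is what you carry out. The two details you supply --- that spectrum-regularity collapses the averages so $\overline{m}_{hi}=m_{hi}$, and that the hypothesis $D=d$ upgrades ``$h$-punctually distance-polynomial'' to ``$h$-punctually distance-regular'' via Lemma \ref{lema3.1}, forcing $q_h=p_h$ --- are exactly the intended ingredients.
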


Notice that every (not necessarily regular) graph is
$0$-punctually distance-regular and $1$-punctually
distance-regular, because $\A_0=\I\in{\cal A}$ and
$\A_1=\A\in{\cal A}$. However, in general a graph is neither
$0$-punctually spectrum-regular nor $1$-punctually
spectrum-regular. If we apply Proposition~\ref{propo(b)
h-punctual d-r} for $h=0,1$ though, then we obtain reassuring
results. Indeed, if $\G$ is $0$-punctually spectrum-regular then
    $m_{0i}=\frac{m_i}{n}$, and
$$
\overline{\delta}_0=\frac 1n\left(\sum_{i=0}^d\frac{m_{0i}^2}{m_i}\right)^{-1}=
\frac 1n\left(\sum_{i=0}^d\frac{m_i}{n^2}\right)^{-1}=
n\left(\sum_{i=0}^d m_i\right)^{-1}=1.
$$
If $\G$ is $1$-punctually spectrum-regular then
    $m_{1i}=\frac{\lambda_i}{\lambda_0}\frac{m_i}n$ by
    Proposition \ref{1-puncw->walk}, and indeed
$$
\overline{\delta}_1
= \frac
1n\left(\sum_{i=0}^d\frac{m_i\lambda_i^2}{n^2\lambda_0^2}\right)^{-1}
 = n\lambda_0^2\left(\sum_{i=0}^d m_i\lambda_i^2\right)^{-1} =
n\lambda_0^2\left(n\lambda_0\right)^{-1} = \lambda_0.
$$

The most interesting result we obtain of course for $h=d~(=D)$. By
Proposition
    \ref{G is d-psr}, $\G$ is $d$-punctually
    spectrum-regular with $m_{di}=(-1)^i\frac
    {\pi_0}{n\pi_i}$. Then the condition of Proposition
    \ref{propo(b) h-punctual d-r} for $d$-punctual
    distance-regularity (and hence distance-regularity; we again use
    Corollary \ref{D-punc->drg}) becomes
$$
\overline{\delta}_d=\frac 1n\left(\sum_{i=0}^d\frac{m_{di}^2}{m_i}\right)^{-1}=
\frac 1n\left(\sum_{i=0}^d\frac{\pi_0^2}{n^2\pi_i^2m_i}\right)^{-1}=
\frac n{\pi_0^2}\left(\sum_{i=0}^d\frac 1{m_i\pi_i^2}\right)^{-1},
$$
which corresponds to the condition of the spectral excess theorem
for a (regular) graph to be distance-regular, as the right hand side
of the equation is known as an easy expression for $p_d(\lambda_0)$
in terms of the eigenvalues.\\

\noindent {\bf Acknowledgements} The authors would like to thank the referees for
their comments on an earlier version.

{\small

}
\end{document}